\documentclass[12pt,reqno]{amsart}
\usepackage{amsmath,amssymb,amsthm, amsfonts}
\usepackage{color}
\usepackage[breaklinks=true]{hyperref}
\usepackage{enumitem}

\theoremstyle{plain}
\newtheorem{theorem}{Theorem}[section]
\newtheorem{corollary}[theorem]{Corollary}
\newtheorem{lemma}[theorem]{Lemma}
\newtheorem{proposition}[theorem]{Proposition}

\newtheorem{conjecture}{Conjecture}[section]

\theoremstyle{definition}
\newtheorem{definition}[theorem]{Definition}

\newtheorem{remark}[theorem]{Remark}

\numberwithin{equation}{section}
\numberwithin{table}{section}
\allowdisplaybreaks

\newcommand{\BH}{\mathbf{B}(\mathcal{H})}
\newcommand{\BK}{\mathbf{B}(\mathcal{K})}
\newcommand{\hilbert}{\mathcal{H}}

\begin{document}
\title[ Discrete Schoenberg-Delsarte theory]{Complete Discrete Schoenberg-Delsarte theory for homogeneous spaces}

\author{Sujit Sakharam Damase}
\address[S.S.~Damase]{Department of Mathematics, Indian Institute of
Science, Bangalore 560012, India;}
\email{\tt sujits@iisc.ac.in}

\author{James Eldred Pascoe}
\address[J.E.~Pascoe]{Department of Mathematics, Drexel University, Philadelphia, PA 19104, United States;}
\email{\tt jep362@drexel.edu}
\thanks{We thank J C Bose Fellowship JCB/2021/000041 of SERB for supporting this research.}

\date{\today}

\keywords{positive definite functions, positive kernels, completely positive maps, Schoenberg's theorem, matrix completion, sphere packing, Delsarte bounds, operator algebras}

\subjclass[2020]{
43A35, 
15B48, 
46L07; 
52C17, 
47A20  
}

\begin{abstract}   
We develop a theory of partially defined complete positivity preservers, extending Schoenberg's classical characterization to functions defined only on discrete subsets or constrained domains. We frame the extension problem through the theory of completely positive maps on operator systems— we characterize general partially defined completely positive definite functions on general homogeneous spaces. We apply our interpolation to constrained packing problems and Delsarte theory, where one uses positive definite functions on homogeneous spaces to obtain bounds on various packing problems. We prove the specific positive definite function witnesses that a code is sharp for constrained angle codes  must be from polynomials.
\end{abstract}

\maketitle
\setcounter{tocdepth}{2}
\tableofcontents

\section{Introduction}
    A square matrix $[m_{ij}]_{i,j}$ with real entries is said to be {\bf symmetric} if 
    $m_{ij}=m_{ji}.$ A real symmetric matrix is said to be {\bf positive
    semidefinite} if 
    \[
        \sum_{i,j} c_ic_j m_{ij}\geq 0
    \]
    for any sequence of real numbers $c_1, \ldots, c_N.$

    Schoenberg \cite{schoenberg42} characterized {\bf positivity preservers}, continuous functions $f: [-1,1] \rightarrow \mathbb{R}$ such that given a positive semidefinite matrix $[m_{ij}]_{i,j}$ with entries in $[-1,1]$ one has that the matrix $[f(m_{ij})]_{ij}$ is again positive
    semidefinite as exactly $f$ of the form
        \[
            f(x) = \sum^{\infty}_{i=0} a_i x^i
        \]
    where $a_i\geq 0$ and $\sum^{\infty}_{i=0} a_i < \infty.$

    We analyze the theory of positive definite functions which are merely defined on $X\subseteq [-1,1].$ Moreover, we will extend the classical theory to the ``complete" case, that is to extend to the case of operator valued functions.

    A {\bf partially defined matrix} is a matrix with some entries left unspecified. For example, the matrix 
        \begin{equation}\label{simplematrix}
            \begin{bmatrix}
                 1 & ? & -1 \\
                 ? & 2 &  1 \\
                -1 & 1 &  ?
            \end{bmatrix}
        \end{equation}
    is a partially defined matrix. A partially defined matrix is 
    said to be {\bf positive semidefinite} if there exists a way to substitute values for the unspecified entries such that the matrix becomes positive semidefinite. That is, the partially defined matrix has a positive semidefinite {\bf completion}.
    For example, the above matrix \eqref{simplematrix} is positive semidefinite as 
        \begin{equation}\label{simplematrix2}
            \begin{bmatrix}
                 1 & 0 & -1 \\
                 0 & 2 &  1 \\
                -1 & 1 &  17
            \end{bmatrix}
        \end{equation}
    gives a positive semidefinite completion.

    We consider the preserver problem for partially defined positive semidefinite matrices. Given $X \subseteq \mathbb{R},$ we say
    $f: X \rightarrow \mathbb{R}$ is a {\bf partially defined positivity preserver} if for any partially defined positive semidefinite matrix $[m_{ij}]_{i,j}$ we have that $[f(m_{ij})]_{i,j}$ is again a partially defined positive semidefinite matrix, where $f$ of an unspecified entry is left unspecified. We note that if $f: [-1,1] \rightarrow \mathbb{R}$ is a positivity preserver, it must be a partial positivity preserver, as given any partially positive semidefinite matrix with entries in $[-1,1],$ any completion of it must have entries in $[-1,1]$ and thus one may apply $f$ to the completion to find a completion of $[f(m_{ij})]_{i,j}.$
    For example, if our partially defined matrix $[m_{ij}]_{i,j}$ is given by \eqref{simplematrix},
    and $f(x)=x^2,$ we have that
    $[f(m_{ij})]_{i,j}$ is given by
        \begin{equation}\label{simplematrix3}
            \begin{bmatrix}
                 1 & ? & 1 \\
                 ? & 4 &  1 \\
                1 & 1 &  ?
            \end{bmatrix}
        \end{equation}
    which can be completed to the positive semidefinite matrix
        \begin{equation}\label{simplematrix4}
            \begin{bmatrix}
                 1 & 0 & 1 \\
                 0 & 4 &  1 \\
                1 & 1 &  289
            \end{bmatrix}.
        \end{equation}


    We may now state a consequence of our general results proving a question asked in \cite{tohoku}.
    \begin{theorem}\label{mainresultpreview}
        If $\psi: \mathbb{Z} \rightarrow \mathbb{R}$ is a partially defined positivity preserver then
            \[
                \psi(x) = \sum^{\infty}_{i=0} a_i x^i
            \]
        where $a_n\geq 0$ and $\limsup a_n^{1/n} =0.$
    \end{theorem}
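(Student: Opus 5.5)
The plan is to reduce to Schoenberg's theorem on $[-1,1]$ by rescaling, once for each $N\in\mathbb{N}$, and then pass to a limit in the Taylor coefficients.

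Fix $N$ and set $X_N=\{k/N : k\in\mathbb{Z},\ |k|\le N\}\subseteq[-1,1]$. Define $g_N(t)=\psi(Nt)$ on $X_N$. The first claim is that $g_N$ is a partially defined positivity preserver on $X_N$ for partial matrices whose specified diagonal equals $1$. Indeed, if such a partial matrix $[m_{ij}]$ has a PSD completion, then $[Nm_{ij}]$ is a partial PSD matrix with integer specified entries. Applying $\psi$ and rescaling back shows that $[g_N(m_{ij})]$ is partially PSD.

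The key step, and the one I expect to be the main obstacle, is to upgrade this to a genuine Schoenberg function. I want $f_N:[-1,1]\to\mathbb{R}$ of the form $f_N(t)=\sum_i a^{(N)}_i t^i$, with $a^{(N)}_i\ge 0$, such that $f_N|_{X_N}=g_N$.

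I would argue by duality, which is the finite-dimensional shadow of the Arveson/Krein extension framework the paper sets up. The cone
\[
C=\{(f(x))_{x\in X_N} : f \text{ a Schoenberg function}\}\subseteq\mathbb{R}^{X_N}
\]
is convex, and I expect it to be closed since $X_N$ is finite. Suppose $(g_N(x))_x\notin C$. Then Hahn--Banach gives weights $h:X_N\to\mathbb{R}$ with $\sum_x h(x)\,x^i\ge 0$ for every $i$, but $\sum_x h(x)g_N(x)<0$.

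I then need to realize $h$ by a partial matrix. The aim is to build, in high dimension, a finite configuration of unit vectors whose Gram matrix has all entries in $X_N$. Specifying exactly those entries, and weighting the pairs according to $h$, should give a partial PSD matrix $[m_{ij}]$ together with a PSD "test" matrix $P$ such that:
\begin{itemize}
\item pairing $P$ with $[m_{ij}^{\,i}]$ recovers the nonnegative moments $\sum_x h(x)x^i$;
\item pairing $P$ with any completion of $[g_N(m_{ij})]$ is forced to be negative.
\end{itemize}
This would contradict the partial positivity of $[g_N(m_{ij})]$. Constructing $P$ is the delicate part. I expect the cleanest route is to use the unspecified entries as free variables, so that every completion of $[g_N(m_{ij})]$ is controlled by $h$.

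Granting the extension, write $\psi(k)=\sum_i b^{(N)}_i k^i$ for $|k|\le N$, where $b^{(N)}_i=a^{(N)}_iN^{-i}\ge 0$. For any $M\le N$, evaluating at $k=M$ gives
\[
b^{(N)}_i M^i\le \sum_j b^{(N)}_j M^j=\psi(M),
\]
so $b^{(N)}_i\le \psi(M)M^{-i}$ uniformly in $N\ge M$.

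By a diagonal argument, choose a subsequence along which $b^{(N)}_i\to b_i$ for every $i$. The limits satisfy $0\le b_i\le \psi(M)M^{-i}$ for every $M$. Hence $\limsup b_i^{1/i}\le 1/M$ for every $M$, and so $\limsup b_i^{1/i}=0$.

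Finally, fix $k$ and choose $M>2|k|$. The tails $\sum_{i>I} b^{(N)}_i|k|^i$ are bounded by $\psi(M)\sum_{i>I}2^{-i}$, uniformly in $N$. Dominated convergence then gives $\psi(k)=\sum_i b_i k^i$, which is Theorem~\ref{mainresultpreview} with $a_i=b_i$.
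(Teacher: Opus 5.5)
Your outer scaffolding is sound, and it is essentially what the paper means when it removes the discontinuous terms ``by considering the function on smaller intervals.'' The rescaling to $X_N$, the bound $b^{(N)}_i\le\psi(M)M^{-i}$, the diagonal extraction and the dominated-convergence step all work. The gap is the key step, and as you formulated it, it is false, not just unproved.

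\textbf{The cone $C$ is not closed, and the claimed $f_N$ need not exist.} We have $t^{2i}|_{X_N}\to\chi_{\{\pm1\}}$ and $t^{2i+1}|_{X_N}\to t\chi_{\{\pm1\}}$. Moreover, $\chi_{\{\pm1\}}|_{X_N}$ is itself a partial positivity preserver on $X_N$ for unit-diagonal matrices. Indeed, if unit vectors $v_i$ realize a completion of $[m_{ij}]$, then $[\chi(v_i=\pm v_j)]$ is, up to permutation, block diagonal with all-ones blocks. Hence it is PSD, and it agrees with $\chi_{\{\pm1\}}(m_{ij})$ on the specified entries. Yet for $N\ge2$ it agrees on $X_N$ with no $\sum_i a_it^i$ with $a_i\ge0$. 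At $t=1/N$ every term is nonnegative and the sum is $0$, so every $a_i=0$, contradicting the value $1$ at $t=1$. Your duality argument uses only the preserver property of $g_N$ on $X_N$, so it cannot produce $f_N$. The correct intermediate statement is $g_N=a_N\chi_{\{\pm1\}}+b_Nt\chi_{\{\pm1\}}+\sum_ic^{(N)}_it^i$ on $X_N$ with all coefficients nonnegative; this is the form in the theorem of Section~\ref{partialpoz}. The extra terms only affect $k=\pm N$, so your limiting argument survives if you evaluate only at $|k|<N$.

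\textbf{The realization of $h$ is the theorem itself, not a step toward it.} This remains true even after you replace $C$ by its closure. By matrix-completion duality, the only certificates available are the functionals $g\mapsto\sum_{(i,j)\ \mathrm{specified}}P_{ij}\,g(m_{ij})$. Here $m$ is a partial PSD matrix with entries in $X_N$ and $P\succeq0$ is supported on its specified pattern. You would need every $h$ with $\sum_xh(x)x^i\ge0$ for all $i$ to lie in the closed cone generated by these functionals. That is exactly the dual form of the classification you are trying to prove, and you give no construction of the configuration or of $P$.

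The paper gets this step from infinite-dimensional structure rather than a finite certificate:
\begin{enumerate}
\item Composing with the state $\gamma\mapsto\gamma_{11}$ on $SO(\infty)$ with the discrete topology gives a partially defined completely positive definite function on $\Gamma=\{\gamma:\gamma_{11}\in X\}$ (Lemma~\ref{statelemmapres}).
\item Arveson extension (Theorem~\ref{liftpd}) extends it to a positive definite function on all of $SO(\infty)$. This function is bi-invariant under $SO(\infty)_+$, i.e.\ it is a positive kernel $F(\langle x,y\rangle)$ on the ind-finite sphere with $F|_X=f$.
\item The Christensen--Ressel classification (Corollaries~\ref{indfinitepd} and~\ref{compresthm}), which needs no continuity, forces $F=a\chi_{\{\pm1\}}+bt\chi_{\{\pm1\}}+\sum_ic_it^i$.
\end{enumerate}
The missing idea is this extension to an invariant kernel on the whole infinite-dimensional sphere, followed by a classification there. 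Without it, or a genuine substitute, your proposal does not establish the representation of $g_N$ on $X_N$.
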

    In fact, we will see that one may replace $\mathbb{Z}$ by any set
    $X$ such that $\sup X = \infty.$ In the case where $X$ is bounded or even finite some more delicate considerations occur. We will also see that one may also replace the range $\mathbb{R}$ by the space of self-adjoint operators by phrasing the above definition in terms of the notion of complete positivity from $C^*$-algebras.

    \subsection{Outline}
        We begin with some basic universally appreciable motivation in Section \ref{motivation}. The further historical context, background and methods we adopt to tackle the above fairly pedestrian consideration are somewhat disparate and quite rich, including applications to spherical codes and packing problems. We will need to develop some basic understanding of group representation theory and positive definite functions of groups given in Section \ref{grouppd}, including the theory of homogeneous spaces, Class I representations, connections to other group theoretic properties such as Property (T) and some torment arising from discontinuous representations of topological groups.
        In Section \ref{discoscho}, we give a discretized proof of the classical Schoenberg's theorem using estimates for
        Krawtchouk polynomials.
        Furthermore, we require some notions from the theory of complete positivity to state and prove our results in full generality which we develop in Section \ref{completepositivity}, including the theory of Stinespring representation and Arveson extension. We discuss the theory of completely positive definite functions on homogeneous spaces in Section \ref{homogeneous}, with some consequences for the theory of Class I representations of $SO(\infty)$ over an index $1$ copy of itself. Applying Arveson extension type results, we then treat the theory of partially defined completely positive definite functions on homogeneous spaces in Section \ref{partialhomogeneous}. In Section \ref{constrainedanglesection}, we give applications to constrained angle spherical codes. In Section \ref{partialpoz}, we obtain a characterization of partially defined complete positivity preservers. Finally in Section \ref{conclusion}, we give some open problems, reflections and other considerations, including the problem of Delsarte hallucinations, where Delsarte's method returns an integer but no code of that size exists.
\section{Motivation} \label{motivation}
    Recall a Hilbert space $\mathcal{H}$ is a complete inner product space.

    We have the following gives a self-comprehension property of Hilbert spaces.
    \begin{theorem}[Riesz representation theorem]
        Let $\mathcal{H}$ be a Hilbert space.
        Let $\lambda:\mathcal{H}\rightarrow \mathbb{C}$ be a continuous linear map.
        There is $\hat{\lambda}\in \mathcal{H}$ such that
            $\lambda(f)=\langle f,\hat{\lambda}\rangle.$
    \end{theorem}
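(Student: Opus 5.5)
The plan is to use the completeness of $\mathcal{H}$ to produce an orthogonal complement to the kernel of $\lambda$, and read off $\hat{\lambda}$ from a single vector in that complement. First, if $\lambda = 0$ we take $\hat{\lambda} = 0$ and there is nothing to prove.

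Otherwise, let $N = \ker \lambda$. Continuity of $\lambda$ makes $N$ a closed subspace, and $N \neq \mathcal{H}$.

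The key step is to find a nonzero vector $z \perp N$. This is where completeness enters, and it is the main obstacle.
\begin{itemize}
\item Pick any $x \notin N$ and set $d = \inf_{n\in N}\|x-n\|$.
\item Take a minimizing sequence $n_k \in N$. Apply the parallelogram law to $x-n_k$ and $x-n_l$. Since $(n_k+n_l)/2 \in N$, this gives
\[
\|n_k-n_l\|^2 \le 2\|x-n_k\|^2+2\|x-n_l\|^2-4d^2 \to 0 .
\]
\item So $(n_k)$ is Cauchy and converges, by completeness and closedness of $N$, to some $n_0 \in N$ with $\|x-n_0\| = d$.
\item Set $z = x-n_0$. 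Then $z \neq 0$ because $x \notin N$.
\item To see $z \perp N$, use a variational argument. For $n \in N$ and scalar $t$, the function $t \mapsto \|z - t n\|^2$ is minimized at $t=0$. Expanding it forces $\langle z, n\rangle = 0$.
\end{itemize}

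With $z$ in hand, the rest is algebra. For any $f \in \mathcal{H}$, the vector $\lambda(f) z - \lambda(z) f$ lies in $N$, so it is orthogonal to $z$. Taking the inner product with $z$ gives
\[
\lambda(f)\|z\|^2 = \lambda(z)\langle f, z\rangle.
\]
Hence, for $\langle\cdot,\cdot\rangle$ linear in the first slot,
\[
\hat{\lambda} = \frac{\overline{\lambda(z)}}{\|z\|^2}\, z
\]
satisfies $\lambda(f) = \langle f, \hat{\lambda}\rangle$.

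Uniqueness is not asserted, but it follows at once: if $\langle f, u\rangle = 0$ for all $f$, take $f = u$ to get $u = 0$.
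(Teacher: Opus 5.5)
Your proof is correct. It is the standard nearest-point argument, and every step checks out:
\begin{itemize}
\item the kernel $N$ is closed;
\item the parallelogram law makes the minimizing sequence Cauchy, and completeness gives the minimizer $n_0$;
\item the variational argument shows $z=x-n_0\perp N$;
\item the identity $\lambda(f)z-\lambda(z)f\in N$ yields $\lambda(f)\|z\|^2=\lambda(z)\langle f,z\rangle$.
\end{itemize}
The resulting formula $\hat{\lambda}=\overline{\lambda(z)}\,z/\|z\|^2$ is right for an inner product linear in the first slot. That convention is the one the statement forces, since otherwise $f\mapsto\langle f,\hat{\lambda}\rangle$ would be conjugate-linear.

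The only tersely stated step is the variational one. To extract $\langle z,n\rangle=0$ you must let $t$ be complex, for example $t=s\langle z,n\rangle$ with $s>0$ small. This gives $-2s|\langle z,n\rangle|^2+s^2|\langle z,n\rangle|^2\|n\|^2\ge 0$, and letting $s\to 0$ finishes it. Since you allowed arbitrary scalars $t$, this is covered.

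The paper gives no proof of this theorem. It only quotes it as classical background motivating reproducing kernel Hilbert spaces, so there is no route in the paper to compare yours against.
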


    Assuming that $\mathcal{H}$ is a space of functions and that the evaluation maps $\lambda_x(f)=f(x)$ are continuous gives us the notion of a reproducing kernel Hilbert space.
    \begin{definition}[Reproducing kernel Hilbert spaces]
         Suppose a $\mathcal{H}$ is a space of functions and that the evaluation maps $\lambda_x(f)=f(x)$ are continuous. We call $\mathcal{H}$ a reproducing kernel Hilbert space.

         We call the elements $k_x$ such that $f(x)=\langle f, k_x\rangle$ the kernel functions.
    \end{definition}

    Finally, there is the basic representer theorem.
    \begin{theorem}[Aronszajn-Moore-Kolmogorov type theorem] \label{AMK}
    Let $X$ be a set. Let $K: X \times X\rightarrow \mathbb{C}$
    such that $[K(x_i,x_j)]_{i,j}$ is positive semidefinite for 
    every finite set $x_1,\ldots, x_d.$
    There is a map $\iota: X \rightarrow \mathcal{H}$
    where $\mathcal{H}$ is a Hilbert space and 
        $$\langle \iota_x, \iota_y \rangle = K(x,y).$$
    \end{theorem}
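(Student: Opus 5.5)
The plan is the standard Gram-space construction: build a Hilbert space directly out of the kernel $K$ and take $\iota_x$ to be the class of the point mass at $x$. Let $V$ be the complex vector space of finitely supported functions $c: X \rightarrow \mathbb{C}$, equivalently formal finite combinations $\sum_i c_i \delta_{x_i}$. On $V$ we define the sesquilinear form
    \[
        \Big\langle \sum_i c_i \delta_{x_i}, \sum_j d_j \delta_{y_j} \Big\rangle = \sum_{i,j} c_i \overline{d_j} K(x_i, y_j),
    \]
which is well defined because it is bilinear in the coefficient vectors.

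First we check the form is positive semidefinite. For $v = \sum_i c_i \delta_{x_i}$ we have $\langle v, v\rangle = \sum_{i,j} c_i\overline{c_j}K(x_i,x_j) \geq 0$ by the hypothesis that $[K(x_i,x_j)]$ is positive semidefinite. In the complex setting, positive semidefiniteness of every matrix $[K(x_i,x_j)]$ forces these matrices to be Hermitian, so $K(x,y) = \overline{K(y,x)}$. Hence the form is Hermitian.

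Next we pass to a genuine inner product space. By the Cauchy--Schwarz inequality, which holds for any positive semidefinite Hermitian form, the null set $N = \{v : \langle v,v\rangle = 0\}$ equals $\{v : \langle v,w\rangle = 0 \text{ for all } w\}$. It is therefore a subspace, and the form descends to a positive definite inner product on $V/N$. Let $\mathcal{H}$ be the completion of $V/N$; the inner product extends by continuity. Set $\iota_x = [\delta_x]$. Then $\langle \iota_x, \iota_y\rangle = K(x,y)$ directly from the definition of the form.

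The only mildly delicate step is verifying that $N$ is a subspace and that the quotient inner product is well defined, which is exactly the Cauchy--Schwarz argument. If one prefers, one can instead realize $\mathcal{H}$ as a space of functions on $X$ via $v \mapsto (x \mapsto \langle v, \iota_x\rangle)$, recovering the reproducing kernel Hilbert space with kernel functions $k_x = \iota_x$. This is not needed for the statement as given.
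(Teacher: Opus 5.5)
Your proof is correct and complete. It is the standard Moore--Aronszajn/Kolmogorov construction: finitely supported functions, the kernel-induced sesquilinear form, quotient by the null space via Cauchy--Schwarz, and completion. The positivity check is exactly the hypothesis applied to the vector $\overline{c}$.

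The paper states this theorem only as classical background and gives no proof, so there is no competing route to compare. Your quotient-and-complete step is the same mechanism behind the GNS and Stinespring constructions the paper later relies on, e.g.\ in Theorem~\ref{cpdthm}.
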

    \subsection{Statistics and numerics}
        A covariance matrix describes the statistical relationships between various data, and thus are ubiquitous in the modern era. For large covariance matrices, there is a temptation to sparsify system by replacing small entries of the covariance matrix with $0.$ However tempting that might be, usually the resulting matrix is not positive semi-definite. Essentially the reason is that power series with positive coefficients cannot have zeros for positive inputs. However, by working either on preserver problems for fixed dimension or only thresholding small negative entries, one obtains a feasible framework for soft thresholding. Rajaratnam and Guillot \cite{guillot2012retaining}
        describe the theoretical foundations of soft thresholding in terms of Schoenberg's theory and themes. Khare and Tao \cite{khare-tao} constructed preservers in fixed dimensions using Schur polynomials. Additionally, matrix completion problems arise naturally from partially observed covariance matrices, see e.g. \cite{ben2012positive}.
        \subsubsection{Floating point arithmetic}
        A natural question was whether Schoenberg's theorem on positivity perservers held in floating point arithmetic. (For example, in the natural context where you apply the aforementioned soft-thresholding.) That is, by discretizing the domain, it was not clear how many new positivity preservers would appear. Within the context of partially defined positivity preservers, our Theorem \ref{mainresultpreview} establishes nothing new arises.
    \subsection{Conservative interpolation}
        General naive approaches to interpolation suffer from huge amounts of instability, such as Runge and Gibb's type phenomena. One sees in classical estimates for Lagrange interpolation \cite{szego, gautschi1974norm} that interpolation is indeed highly unstable. An approach is to choose from a normal type family instead, a set of functions with sufficient compactness properties to guarantee stability.
        
        \subsubsection{Nevanlinna-Pick interpolation}
            For example. Nevanlinna-Pick interpolation theorem
            describes when there exists an analytic function from the unit disk to the closed unit disk taking prescribed values at particular points.
            \begin{theorem}[Pick \cite{pick}, Nevanlinna \cite{nevanlinna}]
                Let $z_1, \ldots, z_n \in \mathbb{D}.$ Let $\lambda_1, \ldots, \lambda_n \in \mathbb{C}.$ There exists an
                analytic function $\varphi: \mathbb{D}\rightarrow \mathbb{D}$
                such that $\varphi(z_i)=\lambda_i$ if and only if
                the matrix
                    \[
                        \begin{bmatrix}
                            \frac{1-\lambda_i\overline{\lambda_j}}{1-z_i\overline{z_j}}
                        \end{bmatrix}_{i,j}
                    \]
                is positive semidefinite.
            \end{theorem}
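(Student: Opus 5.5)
This is the classical Nevanlinna–Pick theorem. I would prove it via the Hilbert-space lifting route: reproducing kernels on the Hardy space, plus a lurking-isometry argument, using Theorem \ref{AMK} as the factorization tool.

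\emph{Necessity.} Work in the Hardy space $H^2$, whose kernel functions are $k_z(w)=(1-w\overline z)^{-1}$. If $\varphi$ is analytic with $\|\varphi\|_\infty\le 1$, then multiplication $M_\varphi$ is a contraction on $H^2$. Its adjoint satisfies $M_\varphi^*k_z=\overline{\varphi(z)}k_z$. Hence for any scalars $c_i$,
\[
\Big\|\sum_i c_i k_{z_i}\Big\|^2-\Big\|M_\varphi^*\sum_i c_i k_{z_i}\Big\|^2=\sum_{i,j}\overline{c_i}c_j\frac{1-\lambda_i\overline{\lambda_j}}{1-z_i\overline{z_j}}\ \ge 0,
\]
up to the convention on which index is conjugated. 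This is exactly positivity of the Pick matrix.

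\emph{Sufficiency.} Assume the Pick matrix $P$ is positive semidefinite.
\begin{enumerate}
\item Apply Theorem \ref{AMK} to the finite set $\{z_1,\dots,z_n\}$. This gives vectors $g_i$ in a finite-dimensional Hilbert space $\mathcal{L}$ with $\langle g_j,g_i\rangle=P_{ij}$.
\item Rearrange $1-\lambda_i\overline{\lambda_j}=(1-z_i\overline{z_j})P_{ij}$ into the Gram identity
\[
1+z_i\overline{z_j}\langle g_j,g_i\rangle=\lambda_i\overline{\lambda_j}+\langle g_j,g_i\rangle .
\]
This says that the vectors $(1,\overline{z_i}g_i)\in\mathbb{C}\oplus\mathcal{L}$ and $(\overline{\lambda_i},g_i)\in\mathbb{C}\oplus\mathcal{L}$ have identical Gram matrices.
\item The map $(1,\overline{z_i}g_i)\mapsto(\overline{\lambda_i},g_i)$ therefore extends linearly to a well-defined isometry between the spans. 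Since the spaces are finite-dimensional (after enlarging $\mathcal{L}$ if needed), extend it to a unitary
\[
V^*=\begin{bmatrix}A&B\\C&D\end{bmatrix}
\]
on $\mathbb{C}\oplus\mathcal{L}$.
\item Define the transfer function $\varphi(z)=A+zB(I-zD)^{-1}C$. It is analytic on $\mathbb{D}$ because $\|D\|\le1$.
\item A standard computation from unitarity shows $1-|\varphi(z)|^2=(1-|z|^2)\|(I-zD)^{-1}C\|^2\ge0$, so $\varphi$ maps $\mathbb{D}$ into the closed unit disk.
\item Solving the relations that encode $V$ on the spanning vectors shows $\varphi(z_i)=\lambda_i$.
\end{enumerate}

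\emph{Target of the map.} The statement asks for $\varphi:\mathbb{D}\to\mathbb{D}$. If $\varphi$ is unimodular somewhere in $\mathbb{D}$, the maximum principle makes it a constant of modulus one. In that case all $\lambda_i$ are equal and unimodular, and $P=0$. So under the reading with the open disk one either excludes this degenerate case or accepts it; I would state the result for the closed unit disk as the target.

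\emph{Main obstacle.} The delicate step is the conjugation bookkeeping in the lurking isometry. One has to check that the isometry is well defined even when the vectors are linearly dependent, which equality of Gram matrices handles. One also has to make sure the recovered $\varphi$ interpolates $\lambda_i$ rather than $\overline{\lambda_i}$. I would fix the conventions by first checking the case $n=1$ directly.
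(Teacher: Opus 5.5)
The paper does not prove this statement: it quotes it, with citations to Pick and Nevanlinna, only as motivation for ``conservative interpolation,'' so there is no argument of the paper to compare against. Your route is the standard modern proof and is sound. Necessity goes through Hardy-space kernels, where your computation gives exactly $c^*Pc\ge 0$. Sufficiency goes through the Kolmogorov factorization (Theorem \ref{AMK}), a lurking isometry, and a unitary colligation. It is also close in spirit to the paper's own method, which factors a positive object (Stinespring) and then extends it (Arveson); you factor the Pick kernel and extend a partial isometry to a unitary.

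There are two points to tighten.

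First, the conjugation slip you anticipated does occur with the conventions you wrote down. From $V^*(1,\overline{z_i}g_i)=(\overline{\lambda_i},g_i)$ you get $C+\overline{z_i}Dg_i=g_i$, so $g_i=(I-\overline{z_i}D)^{-1}C$. Then $A+\overline{z_i}Bg_i=\overline{\lambda_i}$ says $\varphi(\overline{z_i})=\overline{\lambda_i}$, not $\varphi(z_i)=\lambda_i$. The fix is to use
\[
z\mapsto\overline{\varphi(\bar z)}=\overline{A}+zC^*(I-zD^*)^{-1}B^*,
\]
which is the transfer function of $V$ rather than $V^*$. It is still analytic on $\mathbb{D}$ with modulus at most $1$, and it interpolates $z_i\mapsto\lambda_i$. (No enlargement of $\mathcal{L}$ is needed: equal Gram matrices give spans of equal dimension, hence orthogonal complements of equal dimension.)

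Second, your caveat about the target is correct, and it shows the statement as printed is slightly inaccurate. For $n=1$, $z_1=0$, $\lambda_1=1$, the Pick matrix is $[0]\succeq 0$, but no analytic map into the open disk sends $0$ to $1$. More precisely, positivity of the Pick matrix forces $|\lambda_i|\le 1$ for all $i$. If some $|\lambda_i|=1$, then the $i$th diagonal entry vanishes, so the whole $i$th row vanishes. This forces $\lambda_j=\lambda_i$ for every $j$, i.e.\ a unimodular constant. Outside this degenerate case all $|\lambda_i|<1$. Your solution is then not a unimodular constant, so the maximum modulus principle upgrades it from the closed disk to a map into $\mathbb{D}$.
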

    
            One may reinterpret Theorem \ref{mainresultpreview} as solving a kind of interpolation problem for positive definite functions, and we will later solve the problem in general for completely positive definite functions via partially defined completely positive definite functions.

            \subsection{Sphere packing and Schoenberg's theorem}
            The sphere packing problem concerns with the densest packing of spheres into Euclidean space $\mathbb{R}^d$. That is, what fraction of $\mathbb{R}^d$ can be covered by the congruent balls that do not intersect except along their boundaries? There has been a lot of work in this direction such as \cite{cohn-elkies}, \cite{Cohn-woo}, \cite{Cohn-Zhao},  and of course the celebrated \cite{viazovska2017sphere}, \cite{cohn2017sphere}.

            Linear programming bounds are the most powerful known technique for producing upper bounds in such problems. In particular, \cite{K-L} uses this technique to prove the best bounds known for sphere packing density in high dimensions. However, \cite{K-L} does not study sphere packing directly, but rather passes through the intermediate problem of spherical codes.
            Let $\Delta_{\mathbb{R}^n}$ denote the maximal packing density.

            A spherical code $\mathcal{C}$ in dimension $n$ with a minimum angle $\theta$ is a set of points on the unit sphere $S^{n-1}$ so that no two are closer than angle $\theta$ to each other along the the great circle connecting them. That is, $\langle x,y\rangle \leq \cos{\theta}$ for all pairs of distinct points $x,y \in \mathcal{C}.$ Let $A(n, \theta)$ denote the greatest size of such a spherical code.

            In \cite{Delsarte}, Delsarte used linear programming technique to give upper bounds for packing density in the setting of error-correcting codes. Delsarte, Goethals, and Siedel \cite{DGS} and Kabitiansky and Levenshtein \cite{K-L} independently formulated a linear program for proving upper bounds on $A(n,\theta)$. Using this technique, Kabitiansky and Levenshtein \cite{K-L} obtained an upper bound on $A(n, \theta)$ and then use the inequality 
            \[
            \Delta_{\mathbb{R}^{n}} \leq \sin^{n}{(\theta/2)} A(n+1, \theta).
            \]
            This inequality was further improved by Cohn and Zhao \cite{Cohn-Zhao}. They proved that, for all $\theta\in [\pi/3, \pi],$
            \[
            \Delta_{\mathbb{R}^{n}} \leq \sin^{n}{(\theta/2)} A(n, \theta).
            \]
             Now, we establish the connection between the spherical code bounds and Schoenberg's theorem. For a positive definite function $g: [-1,1]\to \mathbb{R},$ define $\Bar{g}$ to be its \textbf{average} by
             \[
             \Bar{g} = \dfrac{\int_{-1}^{1} g(t) (1-t^{2})^{(n-3)/2} dt}{\int_{-1}^{1} (1-t^{2})^{(n-3)/2}dt} = \displaystyle \mathop{\mathbb{E}}_{x,y\in S^{n-1}} g(\langle x,y \rangle).
             \]
             That is, $\Bar{g}$ is the expectation of $g(\langle x,y\rangle)$ with $x$ and $y$ chosen independently and uniformly at random from $S^{n-1}.$
             A function $g: [-1,1]\to \mathbb{R}$ is said to be \textbf{positive definite function} on the sphere $S^{n-1}$ if, for all $N$ and all $x_{1},\dots, x_{N}$, the matrix $(f(\langle x_{i}, x_{j}\rangle))_{1\leq i,j\leq N}$ is positive semidefinite.
             By Schoenberg's theorem, if $g: [-1,1]\to \mathbb{R}$ is a continuous positive definite function on the sphere $S^{n-1}$ then it can be written as the nonnegative linear combination of the Gegenbauer polynomials $C_{k}^{(n/2 - 1)}$ for $k\in \mathbb{Z}_{\geq 0}.$ The family of Gegenbauer polynomials or ultraspherical polynomials $C_{k}^{(\alpha)}$ occurs as orthogonal polynomials with respect to the measure $(1-t^2)^{\alpha - 1/2} dt$ on $[-1,1]$. For $\alpha = n/2 -1,$ this measure arises naturally (up to scaling) as the orthogonal projection of the surface measure from $S^{n-1}$ onto $x$-axis. 

             \begin{theorem}[Delsarte–-Goethals–-Seidel \cite{DGS}, Kabatiansky–-Levenshtein \cite{K-L}, Cohn--Zhao \cite{Cohn-Zhao}]\label{delsartegs}
                Let $g: [-1,1]\to \mathbb{R}$ be a continuous, positive definite kernel on $S^{n-1}$ with the following properties:
                \begin{itemize}
                    \item $g(t)\leq 0,$ if $t\in [-1, \cos{\theta}];$
                    \item $\Bar{g}>0$.
                \end{itemize}
                Then
                \[
                A(n, \theta) \leq \frac{g(1)}{\Bar{g}}.
                \]
             \end{theorem}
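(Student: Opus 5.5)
The plan is the standard double-counting argument: evaluate $\sum_{i,j} g(\langle x_i,x_j\rangle)$ over a code in two ways, bounding it above using the sign condition on $g$ and below using positive definiteness via Schoenberg's expansion.

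Let $\mathcal{C}=\{x_1,\ldots,x_N\}\subseteq S^{n-1}$ be a spherical code with minimum angle $\theta$, so $\langle x_i,x_j\rangle\in[-1,\cos\theta]$ for $i\neq j$.

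\emph{Upper bound.} Split the double sum $S=\sum_{i,j=1}^N g(\langle x_i,x_j\rangle)$ into diagonal and off-diagonal terms. The diagonal contributes exactly $N g(1)$. Every off-diagonal term has argument in $[-1,\cos\theta]$, so it is $\leq 0$ by the first hypothesis. Hence $S\leq N g(1)$.

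\emph{Lower bound.} Use the Schoenberg expansion recalled just before the statement: since $g$ is continuous and positive definite on $S^{n-1}$,
\[
g(t)=\sum_{k\geq 0} a_k C_k^{(n/2-1)}(t)/C_k^{(n/2-1)}(1), \qquad a_k\geq 0,
\]
with convergence uniform on $[-1,1]$ (because $|C_k(t)|\le C_k(1)$ and $\sum a_k = g(1)<\infty$).
\begin{itemize}
\item The constant polynomial $C_0$ is orthogonal to all $C_k$, $k\geq 1$, with respect to $(1-t^2)^{(n-3)/2}dt$. Therefore $a_0=\bar g$.
\item Each $C_k^{(n/2-1)}(\langle x,y\rangle)$ is itself a positive definite kernel on $S^{n-1}$. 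By the addition formula it equals a positive multiple of $\sum_m Y_{k,m}(x)Y_{k,m}(y)$ over an orthonormal basis of degree-$k$ spherical harmonics. Consequently $\sum_{i,j} C_k(\langle x_i,x_j\rangle)=c_k\sum_m \bigl|\sum_i Y_{k,m}(x_i)\bigr|^2\geq 0$.
\end{itemize}
Dropping all terms with $k\geq 1$ gives $S\geq a_0 N^2=\bar g N^2$.

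\emph{Conclusion.} Combining the two bounds gives $\bar g N^2\leq N g(1)$. Since $\bar g>0$, this yields $N\leq g(1)/\bar g$. Taking the supremum over all codes gives $A(n,\theta)\le g(1)/\bar g$.

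\emph{Main obstacle.} The delicate step is identifying $\bar g$ with the constant coefficient $a_0$ and justifying the exchange of the finite sum with the infinite expansion. Uniform convergence handles the latter. Alternatively, one can avoid the expansion entirely: define $h(t)=g(t)-\bar g$, note $\mathbb{E}_y h(\langle x,y\rangle)=0$ for every $x$ by rotation invariance, and show directly that $h$ remains positive definite (as the projection of $g$ onto the orthogonal complement of constants). I would present the Gegenbauer route as the primary argument, since it uses only the results quoted above.
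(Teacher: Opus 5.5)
Your proof is correct, but your main argument takes a different route from the paper's.

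The paper, following Cohn--Zhao, never expands $g$. It forms the signed measure $\nu=\sum_{x\in\mathcal{C}}\delta_x-|\mathcal{C}|\mu$ and uses that a continuous positive definite kernel integrates nonnegatively against it, so $\iint g(\langle x,y\rangle)\,d\nu(x)\,d\nu(y)\geq 0$. Rotation invariance makes every cross term $\int g(\langle x,y\rangle)\,d\mu(y)$ equal to $\bar g$, and expanding gives $\sum_{x,y\in\mathcal{C}}g(\langle x,y\rangle)\geq|\mathcal{C}|^2\bar g$ in one line. This is exactly the ``alternative'' you sketch at the end, since positive definiteness of $g-\bar g$ is the same as positivity of the $g$-kernel on measures of total mass zero.

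You instead obtain the lower bound from Schoenberg's theorem (Theorem~\ref{schoclassic}). You identify $a_0=\bar g$ by orthogonality and discard the nonnegative contributions $c_k\sum_m|\sum_i Y_{k,m}(x_i)|^2$ for $k\geq 1$.

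The two routes trade off as follows.
\begin{itemize}
\item The measure argument is more economical and portable. It needs only that $x\mapsto\int g(\langle x,y\rangle)\,d\mu(y)$ is constant, so it works verbatim on any compact homogeneous space without knowing its zonal spherical functions. The price is a step the paper leaves tacit: passing from finite Gram matrices to integration against $\nu$, which uses continuity of $g$.
\item Your route relies on the full classical Schoenberg theorem, including $\sum_k a_k=g(1)<\infty$. That summability is what makes the expansion converge uniformly, so it can be evaluated termwise at the code's inner products; it is part of the classical statement but is not spelled out in Theorem~\ref{schoclassic}.
\item In exchange, your route exhibits the linear-programming structure $g(1)=\sum_k a_k$, $\bar g=a_0$, and shows exactly when equality holds. $g$ must vanish at every inner product occurring in the code, and $\sum_i Y_{k,m}(x_i)=0$ for all $m$ whenever $k\geq 1$ and $a_k>0$. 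This complementary-slackness information is what one exploits in sharpness questions such as Theorem~\ref{firstsharptheorem}.
\end{itemize}
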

             We will give a proof of this theorem following the approach of \cite{Cohn-Zhao} for illustrative purposes.
             \begin{proof}
                 Let $\mathcal{C}$ be any spherical code in $S^{n-1}$ with minimal angle at least $\theta,$ let $\mu$ be the surface measure on $S^{n-1}$, normalized to have total measure $1$, and let
                 \[
                 \nu = \sum_{x\in \mathcal{C}} \delta_{x} - |\mathcal{C}| \mu,
                 \]
                 where $\delta_x$ denotes the Dirac measure at point $x$, and $|\mathcal{C}|$ denotes the size of the code. Therefore,
                 \[
                 \iint_{S^{n-1}} g(\langle x,y \rangle) \varphi(x) \varphi(y) d\nu(x) d\nu(y) \geq 0,\ \text{for all}\ \varphi\in L^{1}(S^{n-1}).
                 \]
                 Specializing this to $\varphi(x)$ being equal to the constant function $1$ we get,
                 \[
                 -|\mathcal{C}|^{2} \Bar{g} + \sum_{x,y \in \mathcal{C}} g(\langle x,y \rangle) \geq 0
                 \]
                 By definition, all distinct points in the code $\mathcal{C}$ are at least at the distance $\theta$ and hence $g(\langle x,y\rangle)\leq 0,$ for all $x,y \in \mathcal{C}.$ Therefore,
                 \[
                 \sum_{x,y \in \mathcal{C}} g(\langle x,y \rangle) \leq \sum_{x \in \mathcal{C}} g(\langle x,x \rangle) = g(1) |\mathcal{C}|.
                 \]
Substituting this in the above inequality gives
\[
 |\mathcal{C}| \leq \frac{g(1)}{\Bar{g}}.
\]

             \end{proof}
            Concretely, one may view the positive definite function $g$ as inducing a map from $S^{n-1}$ to some high or even infinite dimensional sphere such that any code with minimal angle acute are mapped to some code with a strictly obtuse angle, (cosine negative) for which there is an absolute dimension independent bound. (The sum of the entries of the Gramian $G$ must be positive, as if all the nondiagonal entries are less than $-\varepsilon,$ then there is a limit on the size of $G.$)
            \subsection{Lov\`{a}sz theta function and Schoenberg's theorem}

            We state some lower bounds on measurable chromatic numbers based on semidefinite programming and provide proofs if we wish to use in the forthcoming section.
            \begin{definition}
                The \textbf{chromatic number} of $\mathbb{R}^{n},$ denoted by, $\chi(\mathbb{R}^{n}),$ is the minimum number of colours needed to colour each point of $\mathbb{R}^{n}$ in such a way that points at distance $1$ from each other get different colours.
            \end{definition}
            That is, $\chi(\mathbb{R}^{n})$ is the chromatic number of the graph with vertex set $\mathbb{R}^{n}$, in which two vertices are adjacent if they are at a unit distance.
            \begin{definition}
                The \textbf{measurable chromatic number} of $\mathbb{R}^{n},$ denoted by $\chi_{m}(\mathbb{R}^{n}),$ is the smallest number $m$ such that $\mathbb{R}^{n}$ can be partitioned into $m$ Lebesgue measurable subsets $C_{1},\cdots, C_{m}$ with no two points in $C_{i}$ lie at the distance $1.$ 
            \end{definition}
            Note that, the measurable chromatic number is at least the chromatic number. We can define a similar notion on the the sphere $S^{n-1}.$ For $t\in (-1,1),$ consider the graph $G(n,t)$ whose vertices are the points of $S^{n-1}$ and in which two points $x,y$ are adjacent if $\langle x,y \rangle = t.$ The (measurable) chromatic number of $G(n,t)$ are defined as in the case of $\mathbb{R}^{n},$ and denoted by $(\chi_{m}(G(n,t)))$ $\chi_{m}(G(n,t)).$

            Lov\`{a}sz defined the theta function for a finite graph $G=(V,E)$ as follows:
            \begin{align*}
                \vartheta(G) = \max \bigg\{ 
                \sum_{x \in V} \sum_{y \in V} K(x,y) : & K \in \mathbb{R}^{V\times V}\ \text{is positive semidefinite},\\
                & \sum_{x\in V} K(x,x)= 1,\ K(x,y) = 0\ \text{if}\ xy\in E
                \bigg\}.
            \end{align*}
            The above notion can be extended \cite{gafa} to a compact metric space $(V,d)$ equipped with a Haar measure $\mu.$ Let $D$ be the range of the distance function $d.$ Then we can define the graph $G= G(V, D)$ with vertex set $V$ and edge set $E= \{xy : d(x,y) \in D\}.$ Now, we define the Lov\`{a}sz theta function to this graph $G$
            \begin{align*}
                \vartheta(G) = \max \bigg\{ 
                \iint_{V\times V} & K(x,y) d\mu(x) d\mu(y) :  K \in \mathcal{P}(V\times V),\\
                & \int_{V} K(x,x) d\mu(x) = 1,\ K(x,y) = 0\ \text{if}\ d(x,y)\in E
                \bigg\}.
            \end{align*}
            See \cite{gafa} for the following result.
            \begin{theorem}
                The theta function is an upper bound for the stability number, i.e.
                \[
                \vartheta(G) \geq \alpha(G),
                \]
                where $\alpha(G)$ is the supremum over the measure of all subsets $C$ of $V$ with no two elements in $C$ are adjacent.
            \end{theorem}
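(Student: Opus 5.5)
The statement to prove is $\vartheta(G)\geq \alpha(G)$. The plan is to take an arbitrary measurable independent set $C\subseteq V$ of positive measure and build from it a feasible kernel $K$ for the program defining $\vartheta(G)$ whose objective value is exactly $\mu(C)$. Taking the supremum over $C$ then gives $\vartheta(G)\geq\alpha(G)$. This mirrors the finite-graph argument, where one takes $K=\frac{1}{|C|}\mathbf 1_C\mathbf 1_C^{T}$.

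Concretely, for an independent set $C$ with $\mu(C)>0$ I would set
\[
K(x,y)=\frac{1}{\mu(C)}\,\mathbf{1}_C(x)\mathbf{1}_C(y).
\]
The feasibility checks are as follows.
\begin{itemize}
\item \textbf{Positivity.} $K$ is a rank-one kernel $\iota_x\overline{\iota_y}$ with $\iota_x=\mu(C)^{-1/2}\mathbf 1_C(x)$. Hence every matrix $[K(x_i,x_j)]_{i,j}$ is a Gramian and so positive semidefinite, in line with Theorem~\ref{AMK}. The integral form $\iint K\varphi\bar\varphi\,d\mu\,d\mu=\mu(C)^{-1}\left|\int_C\varphi\,d\mu\right|^2\geq 0$ also holds, so $K\in\mathcal P(V\times V)$.
\item \textbf{Normalization.} $\int_V K(x,x)\,d\mu(x)=\mu(C)/\mu(C)=1$.
\item \textbf{Vanishing on edges.} If $xy\in E$, then $x$ and $y$ cannot both lie in $C$, because $C$ is independent. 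Hence $K(x,y)=0$.
\end{itemize}
The objective value is then $\iint K\,d\mu\,d\mu=\mu(C)^{-1}\mu(C)^2=\mu(C)$. Therefore $\vartheta(G)\geq\mu(C)$ for every such $C$, which is the claim. If every independent set is null, then $\alpha(G)=0$ and there is nothing to prove, provided the program has at least one feasible $K$. Feasibility here is the usual convention for a supremum, and it holds whenever any independent set of positive measure exists.

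The main obstacle is the precise meaning of $\mathcal P(V\times V)$ in \cite{gafa}: it may require continuous kernels rather than merely measurable ones. In that case $\mathbf 1_C\otimes\mathbf 1_C$ is not admissible. To handle this, I would first check whether the definition allows bounded measurable positive kernels; indicator kernels fit there. If continuity is required, I would approximate. The natural route is to shrink $C$ to a compact $C'\subseteq C$ of nearly full measure and mollify $\mathbf 1_{C'}$ by a continuous function $h_\varepsilon\ge 0$ supported in a small neighborhood of $C'$. The difficulty is that the neighborhood must still avoid edges, since $K=h_\varepsilon\otimes h_\varepsilon/\int h_\varepsilon^2$ must vanish on $E$. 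Because $D$ is the range of $d$ and the edge condition is $d(x,y)\in D$, some closedness or separation assumption on the forbidden distances is needed to get a positive gap between $C'$ and its neighbors. I expect to resolve this by using the compact-space setup of \cite{gafa}, and then let $\varepsilon\to 0$ to recover $\mu(C)$ in the limit.
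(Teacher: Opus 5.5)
The paper gives no proof of this statement; it only cites \cite{gafa}. Your kernel $K=\mu(C)^{-1}\mathbf{1}_C\otimes\mathbf{1}_C$ is the right template, and your four checks (positivity, normalization, vanishing on edges, objective value) are correct. The proof is nevertheless incomplete exactly where you flag it. In \cite{gafa} the program runs over \emph{continuous} positive kernels, and this matters. For $G(n,t)$ the edge set $\{(x,y):\langle x,y\rangle=t\}$ is $\mu\otimes\mu$-null, so the edge constraint only has content for genuinely pointwise kernels. Moreover, the reduction to Gegenbauer coefficients that the paper makes immediately afterwards uses Schoenberg's theorem, which requires continuity. So $\mathbf{1}_C\otimes\mathbf{1}_C$ is not feasible. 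Your repair never proves the one fact it needs, namely that some neighborhood of $C'$ is still independent. Saying that a separation assumption ``is needed'' leaves the argument conditional.

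Along your own route this step is short if $D$ is closed and $0\notin D$, which covers $D=\{\delta\}$. For compact $C'\subseteq C$, the set $d(C'\times C')$ is compact and, by independence, disjoint from $D$, so it lies at some distance $\delta_0>0$ from $D$. By the triangle inequality, the open $\varepsilon$-neighborhood of $C'$ is independent once $2\varepsilon<\delta_0$. Take $h(x)=\max\{0,1-d(x,C')/\varepsilon\}$. Then $h\otimes h/\int h^2\,d\mu$ is continuous and feasible. Since $0\le h\le 1$ gives $h^2\le h$, its value satisfies $(\int h\,d\mu)^2/\int h^2\,d\mu\ge\int h\,d\mu\ge\mu(C')$. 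Inner regularity of $\mu$ then finishes the proof, and no limit in $\varepsilon$ is needed.

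The argument in \cite{gafa} needs no hypothesis on $D$; it uses the group $\Gamma$ that underlies the Haar measure. Set
\[
K_C(x,y)=\frac{1}{\mu(C)}\int_\Gamma \mathbf{1}_C(\gamma x)\,\mathbf{1}_C(\gamma y)\,d\gamma .
\]
This is the Gram kernel of the map $x\mapsto\mathbf{1}_C(\,\cdot\,x)\in L^2(\Gamma)$, which is continuous by strong continuity of the regular representation. Hence $K_C$ is continuous and positive. It vanishes identically on $E$ because $\Gamma$ preserves $d$. It satisfies $K_C(x,x)=1/\mu(V)$, and $\iint K_C\,d\mu\,d\mu=\mu(C)$ exactly. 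Your version trades homogeneity for closedness of $D$. The averaged kernel uses the homogeneity that the statement already presupposes and works for arbitrary $D$.
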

            Now, we move onto the homogeneous space $V = S^{n-1}$ endowed with the Euclidean metric on $\mathbb{R}^{n},$ and hence $D$ is a singleton. If $D=\{\delta\}$ and $\delta^{2} = 2 - 2t,$ so that $d(x,y) = \delta$ if and only if $\langle x,y \rangle = t.$ We denote $G(V, D)$ by $G(n,t).$ Therefore, $\Gamma = O(\mathbb{R}^{n})$ acts on the sphere $S^{n-1}$ and this action does not change the value of the theta function, so we get the following characterization
             \begin{align*}
                \vartheta(G) = \max \bigg\{ 
                &\iint_{S^{n-1}}  K(x,y) d\omega(x) d\omega(y) :  K \in \mathcal{P}(V)\ \text{is $\Gamma$-invariant},\\
                & K(x,x)= 1/\omega_{n} \ \text{for all}\ x,\ K(x,y) = 0\ \text{if}\ \langle x,y\rangle = t
                \bigg\},
            \end{align*}
            where $w_{n}$ is the surface area of $S^{n-1}$ and $d\omega$ is the Haar measure on $S^{n-1}.$ By Schoenberg's theorem, one can convert the semidefinite programming problem in infinitely variables with infintely many constraints into the following linear programming problem with optimization variables $a_{k}$:
             \begin{align*}
                \vartheta(G) = \max \bigg\{ \omega_{n}^{2} a_{0} :  a_{k} \geq 0,\ & \text{for}\ k=0,1,\cdots,\\
                & \sum_{k=0}^{\infty}a_{k} = 1/\omega_{n},\ a_{0} +  \sum_{k = 1}^{\infty}a_{k} P_{k}^{(n-3)/2} = 0
                \bigg\}.
            \end{align*}
            \begin{theorem}[\cite{gafa}]
               Let $m(t)$ be the minimum of $P_{k}^{(n-3)/2}$ for $k\in \mathbb{Z}_{\geq 0}.$ Then the optimum value of the above equation is equal to
               \[
               \vartheta(G) = \omega_{n} \frac{m(t)}{m(t)-1}.
               \]
            \end{theorem}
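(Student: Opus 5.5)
The plan is to solve the linear program displayed just above the statement explicitly. I read $P_k=P_k^{(n-3)/2}$ as the Gegenbauer polynomials normalized by $P_k(1)=1$, with $P_0\equiv 1$, and the constraint as $a_0+\sum_{k\ge1}a_kP_k(t)=0$. I take $m(t)=\inf_{k\ge 1}P_k(t)$. Once we know $m(t)<0$, this agrees with the minimum over all $k\ge0$, since $P_0=1$.

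First I rescale by setting $b_k=\omega_n a_k$. The program then becomes: maximize $\omega_n b_0$ subject to $b_k\ge 0$, $\sum_k b_k=1$ and $b_0+\sum_{k\ge1}b_kP_k(t)=0$. So the claim reduces to showing that the optimal value of $b_0$ is $m(t)/(m(t)-1)$.

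For the upper bound, take any feasible $(b_k)$. Because $b_k\ge0$ and $\sum_{k\ge1}b_k=1-b_0$,
\[
0=b_0+\sum_{k\ge1}b_kP_k(t)\ \ge\ b_0+m(t)(1-b_0).
\]
This gives $b_0\bigl(1-m(t)\bigr)\le -m(t)$. Since $1-m(t)>0$, it follows that $b_0\le m(t)/(m(t)-1)$.

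For attainment, choose $k^*$ with $P_{k^*}(t)=m(t)$. Put $b_0=m/(m-1)$, $b_{k^*}=1/(1-m)$, and set all other $b_k$ to zero. Both weights are nonnegative when $m<0$, and they sum to $1$. The constraint holds because
\[
b_0+b_{k^*}m=\frac{-m}{1-m}+\frac{m}{1-m}=0 .
\]
This point is feasible and achieves the bound, so the optimum equals $\omega_n\, m(t)/(m(t)-1)$.

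The main obstacle is justifying that $m(t)$ is genuinely a minimum and that it is negative, for fixed $t\in(-1,1)$.

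Negativity: the zeros of $P_k$ become dense in $[-1,1]$ as $k\to\infty$. Between consecutive zeros $P_k$ changes sign, so I expect some $k$ with $P_k(t)<0$. If density alone does not give this cleanly, I would use the interlacing of zeros of $P_k$ and $P_{k+1}$ together with the Sturm-type comparison to produce such a $k$ explicitly near $t$.

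Attainment: for $n\ge3$ the classical asymptotics give $P_k(t)\to0$ as $k\to\infty$, with decay of order $k^{-(n-2)/2}$. Hence only finitely many $k$ can have $P_k(t)\le m(t)/2<0$, and the infimum is attained among them.

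Finally, the identification of $\vartheta(G)$ with this linear program is taken as given from the preceding discussion. That discussion uses Schoenberg's theorem to expand the invariant kernel as $K=\sum a_kP_k(\langle x,y\rangle)$, and orthogonality to see that only the $a_0$ term survives integration, contributing $\omega_n^2a_0$.
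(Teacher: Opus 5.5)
Your solution of the linear program is correct, but it does not follow the paper, because the paper never derives the value $\omega_n m(t)/(m(t)-1)$. It quotes the formula from \cite{gafa} and only remarks that Theorem~\ref{firstsharptheorem} explains why a minimum rather than an infimum may be taken. Behind that remark is Lemma~\ref{hulllemma}: for finite $X\ni 1$ there is $N=N(X)$ such that every $\hat P_i|_X$ lies in the convex hull of $\hat P_0|_X,\dots,\hat P_N|_X$. The paper proves this from the Darboux decay of Theorem~\ref{darbouxest}, together with Schur products and interlacing/density to build separating functions. After that, every such optimization is a finite linear program. For $X=\{t,1\}$ the lemma says exactly that $\inf_k P_k(t)=\min_{k\le N}P_k(t)$, which is your attainment step via $P_k(t)\to 0$.

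What you add is the explicit solution of the one-constraint program. The inequality $0\ge b_0+m(1-b_0)$ gives the upper bound, and the two-term point supported on $\{0,k^*\}$ attains it. That is precisely the content the paper takes from \cite{gafa}. What the heavier hull lemma buys is generality. With several prescribed inner products, as for the constrained angle codes of Section~\ref{constrainedanglesection}, optimal solutions need not be two-term and there is no closed formula, but the reduction to $P_0,\dots,P_N$ still holds. Your restriction to $n\ge 3$ is genuinely needed: for $n=2$, $P_k(\cos\theta)=\cos k\theta$ does not decay, and the infimum need not be attained.

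The one step to firm up is $m(t)<0$. Density of zeros alone does not tell you the sign of $P_k(t)$; the paper makes the same unelaborated appeal to interlacing and density in the proof of Lemma~\ref{hulllemma}. Your interlacing fallback does work:
\begin{itemize}
\item Let $j(k)$ be the number of zeros of $P_k$ in $(t,1)$. Interlacing gives $j(k+1)\in\{j(k),j(k)+1\}$, and density gives $j(k)\to\infty$, so $j(k)$ is odd for some $k$.
\item Since $P_k(1)>0$ and the zeros are simple, this forces $P_k(t)<0$ unless $P_k(t)=0$.
\item In that exceptional case, interlacing gives $j(k-1)=j(k)$ and $P_{k-1}(t)\neq 0$, so $k-1$ works instead.
\end{itemize}
More simply, the Darboux asymptotics you already use settle it directly. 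Write $t=\cos\theta$ with $\theta\in(0,\pi)$. The phase of the leading term advances by $\theta<\pi$ at each step. It therefore lands infinitely often in an arc of length greater than $\theta$ on which the cosine is at most $-c<0$. For large $k$ the $O(k^{-1})$ relative error cannot change that sign.
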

           Our Theorem \ref{firstsharptheorem} recovers the previous in greater generality. Specifically, it makes clear why we can choose the minimum and not the infimum.
            
            Let $\Bar{G}$ be the \textbf{complement} if graph $G,$ that is, the graph with the same vertex set as $G$ but the two vertices in $\Bar{G}$ are adjacent if and only if they are nonadjacent in $G.$ Then there is a different definition of $\vartheta$ for $G=(V,E)$ (which is equivalent for finite graphs) as follows:
             \begin{align*}
                \vartheta(\Bar{G}) = \min \{ \lambda :  K \in \mathbb{R}^{V\times V}\ \text{is PSD},\ & K(x,x) = \lambda-1\ \text{for all}\ x\in V, \\
                & K(x,y) = -1\ \text{if}\ xy\in E\}.
            \end{align*}
            and its generalization, denoted by $\Bar{\vartheta}(G(n,t))$ is
             \begin{align*}
                \Bar{\vartheta}(G(n,t)) = \min \{ \lambda :& K \in \mathcal{P}(V)\ \text{is $\Gamma$-invariant},\\
                & K(x,x) =\lambda - 1,\ \text{for all}\ x\in S^{n-1},\\
                & K(x,y) = -1\ \text{if}\ \langle x,y\rangle = t\}.
            \end{align*}
            This generalization of the theta function is closely related to the Delsarte's LP bounds \cite{Delsarte}. This connection has been observed in several places, for example, see \cite{gafa}.

            Consider the graph on the unit sphere where two distinct points are adjacent
whenever their inner product lies in the open interval $[-1, t].$ We denote this graph
by $G(n, [-1, t])$. Stable sets in the complement of this graph are finite and consist
of points on the unit sphere with minimal angular distance $\arccos(t).$ Applying the above generalization to this graph we get
 \begin{align*}
                \Bar{\vartheta}(G(n,[-1,t])) = \min \{ \lambda :& K \in \mathcal{P}(V)\ \text{is $\Gamma$-invariant},\\
                & K(x,x) =\lambda - 1,\ \text{for all}\ x\in S^{n-1},\\
                & K(x,y) = -1\ \text{if}\ \langle x,y\rangle \in [-1, t]\}.
            \end{align*}
Now, we can write the \cite{gafa}[Proposition 10.1].
\begin{theorem}
    Let $C\subseteq S^{n-1}$ be a spherical code such that 
    \[
    \langle x,y \rangle \in [-1,t],\ \text{for all}\ x,y \in C.
    \]
    Then the cardinality of $C$ is bounded above by $\Bar{\vartheta}(G(n, [-1,t])).$
\end{theorem}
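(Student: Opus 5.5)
Take any feasible $\Gamma$-invariant positive semidefinite kernel $K$ in the definition of $\Bar{\vartheta}(G(n,[-1,t]))$, with value $\lambda$: $K(x,x)=\lambda-1$ for all $x\in S^{n-1}$ and $K(x,y)=-1$ whenever $\langle x,y\rangle\in[-1,t]$. The plan is to show $|C|\le\lambda$ for every such $K$ and then take the infimum over $\lambda$. If the minimum is not attained, taking the infimum still gives the bound.

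First I would restrict $K$ to the code. The code is finite: distinct points of $C$ have inner product at most $t<1$, so they are uniformly separated on the compact sphere. Write $C=\{x_1,\dots,x_N\}$. Since $K$ is a positive semidefinite kernel, the $N\times N$ matrix $M=[K(x_i,x_j)]_{i,j}$ is positive semidefinite, for instance via the Aronszajn--Moore--Kolmogorov representation (Theorem \ref{AMK}). The hypothesis says every pair of distinct codewords has inner product in $[-1,t]$. Hence $M$ has $\lambda-1$ on the diagonal and $-1$ off the diagonal, so
\[
M=\lambda I - J,
\]
where $J$ is the all-ones matrix. The $\Gamma$-invariance is not needed for this step; it only matters for identifying the optimal value later.

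Next I would test $M$ against the all-ones vector $\mathbf{1}\in\mathbb{R}^N$:
\[
0\le \mathbf{1}^{T}M\mathbf{1}=\lambda N - N^2 .
\]
This gives $N\le\lambda$. Equivalently, the eigenvalue of $\lambda I-J$ on $\mathbf{1}$ is $\lambda-N$, and it must be nonnegative. Taking the infimum over all feasible $\lambda$ yields $|C|\le\Bar{\vartheta}(G(n,[-1,t]))$.

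The main point of care is the word ``positive semidefinite'' in $\mathcal{P}(V)$. One must confirm it means that all finite restrictions $[K(x_i,x_j)]$ are PSD, rather than only integral positivity $\iint K\varphi\bar\varphi\ge0$. If only the integral condition is given, I would first pass to pointwise positive semidefiniteness using continuity of $K$, which holds for $\Gamma$-invariant kernels written as $g(\langle x,y\rangle)$ with $g$ continuous. To do this, approximate the Dirac masses $\delta_{x_i}$ by normalized bump functions concentrated near $x_i$ and take limits in the integral inequality. This recovers the finite-matrix inequality, and the rest of the argument is the elementary computation above.
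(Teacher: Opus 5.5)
Your proof is correct. Note that the paper gives no argument of its own for this statement; it only quotes it from \cite{gafa} (Proposition 10.1 there).

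Your argument is the standard proof of that cited fact, and it is the continuous analogue of Lov\'asz's bound $\omega(G)\le\vartheta(\overline{G})$:
\begin{itemize}
\item restrict a feasible kernel to the code;
\item observe that the restricted matrix is $\lambda I-J$;
\item test it against the all-ones vector.
\end{itemize}
Here a code is exactly a clique in $G(n,[-1,t])$, and $\Bar{\vartheta}$ is defined as $\vartheta$ of the complement.

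Two small remarks.
\begin{itemize}
\item The hypothesis ``for all $x,y\in C$'' must be read for distinct pairs, since $\langle x,x\rangle=1\notin[-1,t]$. You do this implicitly.
\item Your separate finiteness step is unnecessary. Running the $\lambda I-J$ argument on an arbitrary finite subset of $C$ already shows that $C$ is finite with $|C|\le\lambda$.
\end{itemize}

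Your handling of the possible ambiguity in $\mathcal{P}(V)$ is sound: integral positivity plus continuity of $K$ yields pointwise positive semidefiniteness, by approximating point masses with normalized bumps. You are also right that $\Gamma$-invariance plays no role in the bound. It only serves to reduce the semidefinite program to a linear program in the Gegenbauer coefficients.
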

{

We discuss the theory where we replace $[-1, t]$ with a general finite set in Section \ref{constrainedanglesection}. For example, if our finite set does not contain $-1$ there there is a polynomial Delsarte witness by Theorem \ref{firstsharptheorem}.      
            
    \subsection{Cone preservers}
        Finally, we note the relation to the problem posed in \cite{tohoku} related to cone preservers: is there a proof of Schoenberg's theorem for discrete sets? Our analogue, in terms of partially defined positive definiteness, gives a reasonable resolution. Whether one can drop the matrix completion aspect of our definition to obtain something similar is a soft open question about weakest assumptions-- however, we are skeptical. However Section \ref{discoscho} also accomplishes the other aim of \cite{tohoku} to give an ``analysis free" proof of Schoenberg's theorem.
\section{Group representation theory and positive definite functions on groups} \label{grouppd} 
    \subsection{Homogeneous spaces}
    Let $G$ be a compact group. We say, $X$ is a \textbf{2-point homogeneous $G$-space} if the following three properties hold:
    \begin{enumerate}
        \item There is a $G$-action on $X$;
        \item $X$ is a metric space with a distance function $d$ defined on it;
        \item $d$ is strongly invariant under the $G$-action: for any $x_{1}, x_{2}, y_{1}, y_{2}\in X$, $d(x_{1},y_{2}) = d(x_{2},y_{2})$ if and only if there is an element $g\in G$ such that $g(x_{1})= x_{2}$ and $g(y_{1})= y_{2}$.
    \end{enumerate}
    Assumption (3) implies that $G$ acts transitively on $X$. Moreover, $G$ is as close to being doubly transitive as it can be without changing the distances $d(x,y)$ between the points of $X$.  This induces a unique invariant measure $\mu$ on $X$. We assume that $\mu$ is normalized so that $\mu(X)= 1$.
    
    Let $H$ be a subgroup of $G$ that fixes an element $x_{0}\in X$. By Orbit-Stabilizer theorem, $X$ can be identified with the space $G/H$ of left cosets $gH$. These assumptions imply that if $G$ is infinite then $X$ is a compact Riemannian symmetric space of rank one \cite{wang}. Furthermore, this is a particular instance of a Gelfand pair $(G, H)$.   

    The main examples here is when $G$ is the special orthogonal group $SO(n)$ acting on $X = S^{n-1}$ with the natural action. Choosing $x_{0}$ to be the first basis vector $e_{1} = (1,0,\cdots,0)$ gives us that $H = SO(n-1)$. And in the \textit{discrete case} we consider the \textit{Hamming cube} $\{0,1\}^{n}$ with the Hamming metric 
    \[
    d(x,y) = |\{i : x_{i}\neq y_{i}\}|
    \]
    with the isometry group being the wreath product $G= S_{n} \rtimes (\mathbb{Z}/2\mathbb{Z})^{n}.$ The base point can be taken to be $x_{0} = (0,\cdots, 0)$ to get the stabilizer $H = S_{n}.$
    \subsection{Group representation theory}
        
        \subsubsection{Representation theory of compact groups}
Consider the circle $\mathbb{R}/\mathbb{Z} $. For each $n \in \mathbb{Z}$, define
\[
e_n \colon \mathbb{R}/\mathbb{Z} \to \mathbb{C}\ \text{by}\ e_n(t) = e^{2\pi i n t}.
\]
The theory of Fourier Series tells us that $\{e_n\}$ is an orthonormal basis of $L^{2}(\mathbb{R}/\mathbb{Z})$, which means we have the direct sum decomposition into irreducibles
\[
L^{2}(\mathbb{R}/\mathbb{Z}) = \bigoplus_{n \in \mathbb{Z}} \mathbb{C} e_n .
\]
Furthermore, each subspace $\mathbb{C} e_n$ is an invariant subspace for the regular representation. Hence, we have a decomposition of the regular representation into a direct sum of irreducible representations. In addition, it is not difficult to see that every irreducible representation of $S^{1}$ occurs exactly once in this representation.

More generally, it is not difficult to show that every unitary representation of~$S^{1}$ is a direct sum of $1$-dimensional representations.

The following theorem generalizes this to any compact group.   
\begin{theorem}[Peter--Weyl Theorem] \label{PeterWeyl}
Let $G$ be a compact group with a (unique) normalized $G$-invariant Haar measure $\mu$. Then:
	\begin{enumerate}
	\item \label{PeterWeyl-sum}
	Every unitary representation of~$G$ is\/ \textup(isomorphic to\/\textup) a direct sum of irreducible representations.
	\item \label{PeterWeyl-fd}
	Every irreducible representation of~$G$ is finite-dimensional. 
	\item \label{PeterWeyl-countable}
	$\widehat G$ is countable.
	\item \label{PeterWeyl-regrep}
	For the particular case of the regular representation $\bigl(\pi_{reg}, L^{2}(G) \bigr)$, we have
		\[
        \pi_{reg} \cong \bigoplus_{(\pi,\mathcal{H}) \in \widehat G} (\dim \mathcal{H}) \cdot \pi ,
        \]
	where $k \cdot \pi$ denotes the direct sum $\pi \oplus \cdots \oplus \pi$ of $k$~copies of\/~$\pi$. That is, the multiplicity of each irreducible representation is equal to its dimension.
	\end{enumerate}
\end{theorem}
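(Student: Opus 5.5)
The plan is the standard route through the regular representation and the Hilbert--Schmidt theory of compact operators. Haar measure $\mu$ on $G$ is fixed. The first step is to show that $L^2(G)$ splits as a Hilbert direct sum of finite-dimensional invariant subspaces. For a continuous function $\varphi$ on $G$ with $\varphi(g^{-1})=\overline{\varphi(g)}$, I will take the convolution operator $T_\varphi f=\varphi * f$ on $L^2(G)$. It is an integral operator with continuous kernel $\varphi(gh^{-1})$ on the compact space $G\times G$, so it is Hilbert--Schmidt, hence compact. It is self-adjoint by the symmetry of $\varphi$. Because it is defined by left convolution, it commutes with the right regular action. By the spectral theorem for compact self-adjoint operators, each eigenspace $E_\lambda$ with $\lambda\neq 0$ is finite-dimensional and invariant under right translations. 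Letting $\varphi$ run over an approximate identity (symmetric, nonnegative, supported in shrinking neighborhoods of $e$), we have $T_\varphi f\to f$ for every $f$. Therefore the union of all such finite-dimensional invariant subspaces is dense in $L^2(G)$.

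For item (2), let $\pi$ be an irreducible unitary representation on $\mathcal{H}$ and fix $v\neq 0$. I will choose $\varphi$ as above with $A=\int_G \varphi(g)\pi(g)\,d\mu(g)$ satisfying $A\neq 0$; this is possible because $A\to I$ strongly as $\varphi$ concentrates at $e$. The operator $A$ is compact: it is a norm limit of finite-rank operators, obtained by approximating $\varphi$ uniformly by combinations of matrix coefficients from the finite-dimensional pieces of step one. Alternatively, one applies Schur orthogonality to $\pi$ to show the operator $w\mapsto\int\langle w,\pi(g)v\rangle\pi(g)v\,d\mu$ is compact. I favor the second route. The operator $K=\int \pi(g)P_v\pi(g)^{*}\,d\mu(g)$, where $P_v$ is the projection onto $\mathbb{C}v$, commutes with $\pi$, is positive, and is nonzero, since $\langle Kv,v\rangle>0$ by continuity. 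It is compact because it is a Bochner integral of rank-one operators that is norm-continuous in $g$. By Schur's lemma, $K=cI$ with $c>0$, and a compact nonzero multiple of the identity forces $\dim\mathcal{H}<\infty$.

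For item (1), given any unitary representation, I will apply Zorn's lemma to families of mutually orthogonal finite-dimensional irreducible invariant subspaces. Suppose the orthogonal complement $W$ of a maximal family were nonzero. The same averaging trick, with $K$ built from a vector in $W$ and restricted to $W$, is compact, self-adjoint, nonzero and commutes with the action. So it has a finite-dimensional invariant eigenspace, which contains an irreducible subspace, contradicting maximality.

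For items (3) and (4), I will embed each irreducible $\pi$ into $L^2(G)$ by matrix coefficients $w\mapsto \langle \pi(\cdot)w,u\rangle$. Schur orthogonality gives $\int \langle\pi(g)w,u\rangle\overline{\langle\pi(g)w',u'\rangle}\,d\mu=\frac{1}{\dim\mathcal{H}}\langle w,w'\rangle\overline{\langle u,u'\rangle}$. It also shows that coefficients of inequivalent irreducibles are orthogonal. Hence the matrix coefficients of $\pi$ span a copy of $(\dim\mathcal{H})\cdot\pi$ for the regular representation, one copy for each choice of $u$ in an orthonormal basis. By step one and item (1), these spaces exhaust $L^2(G)$: any finite-dimensional invariant subspace consists of matrix coefficients of its own representation. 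Counting multiplicities then gives (4). For (3), the matrix coefficient spaces are mutually orthogonal and nonzero, so $\widehat G$ is countable provided $L^2(G)$ is separable, for example when $G$ is metrizable; in general the cardinality of $\widehat G$ is bounded by the Hilbert dimension of $L^2(G)$. The main obstacle is the compactness argument in items (1) and (2): producing a nonzero compact intertwiner, and in particular verifying that the averaged operator is nonzero and norm-compact.
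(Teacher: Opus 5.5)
The paper gives no proof of this statement. Peter--Weyl is quoted as classical background and never argued, so there is no argument of the authors' to compare yours with. Your proposal is the standard textbook proof, and it is correct:
\begin{itemize}
\item compact self-adjoint convolution operators commuting with right translations give density of the finite-dimensional invariant subspaces;
\item the averaged projection $K=\int_G\pi(g)P_v\pi(g)^*\,d\mu(g)$ serves as a nonzero compact positive intertwiner for (1) and (2);
\item Schur orthogonality gives (4).
\end{itemize}

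Compactness of $K$ is justified. The map $g\mapsto\pi(g)v$ is norm-continuous, and $\|\langle\cdot,u\rangle u-\langle\cdot,u'\rangle u'\|\le\|u-u'\|(\|u\|+\|u'\|)$. So the integrand is a norm-continuous map into the compact operators, and its integral is compact.

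You are right to set aside the other two routes you mention in item (2). Compactness of $A=\int_G\varphi(g)\pi(g)\,d\mu(g)$ would need uniform, not merely $L^2$, density of matrix coefficients, and it is essentially the conclusion being sought. The aside about Schur orthogonality would be circular, since the factor $1/\dim\mathcal{H}$ presupposes finite dimension.

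In the exhaustion step, the claim that a finite-dimensional right-invariant subspace of $L^2(G)$ consists of matrix coefficients needs one more line. $L^2$ classes cannot be evaluated at $e$. Either apply Fubini to $f_j(gx)=\sum_i c_{ij}(x)f_i(g)$, or first show that such a subspace consists of continuous functions.

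Your caveat on (3) is not a weakness of your proof but a defect of the statement as printed. For $G=\mathbb{T}^I$ with $I$ uncountable, $\widehat G\cong\bigoplus_I\mathbb{Z}$ is uncountable. In general $\widehat G$ is countable exactly when $G$ is metrizable, which does cover the groups the paper actually uses ($SO(n)$ and finite groups).

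Likewise, your use of strong continuity is essential: it enters in $\langle Kv,v\rangle>0$ and in the norm-continuity of the integrand. So ``unitary representation'' must be read as continuous unitary representation. This matters in this paper, which deliberately works with discontinuous representations. The representation of $SO(3)$ on $\ell^2(SO(3))$ by left translation of points is unitary but discontinuous for the compact topology. Since $SO(3)$ has infinite conjugacy classes, this representation has no irreducible subrepresentation at all, so (1) fails for it.
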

 \subsubsection{Ultraspherical polynomials}
 In this section, we look at the ultraspherical polynomials from different point of views which will be useful in upcoming sections.
 Ultraspherical/ Gegenbauer polynomials $\{P_{k}^{(n)}\}_{k\in \mathbb{Z}_{\geq 0}}$ for the $n$-dimensional sphere $S^{n-1}$ can be defined as a family of \textit{orthogonal polynomials} with respect to the projection of the surface measure $(1-t^{2})^{(n-3)/2} dt$ on $[-1,1]$. Equivalently, $P_{k}^{(n)}$ is orthogonal to all polynomials of degree less than $k$ with respect to this measure. 

 Orthogonal polynomials enjoy many wonderful properties. We note here some of these properties starting with the three-term recurrence relation.
 \begin{theorem}
     Let $\{p_{n}\}$ be a family of orthogonal polynomials. Then there exists constants $A_{n},\ B_{n}$ and $C_{n}$ with $A_{n}, C_{n}>0$ such that the following relation holds:
     \[
     p_{n}(x) = (A_{n}x+B_{n})p_{n-1}(x) - C_{n} p_{n-2}(x), \quad n=2,3,4,\cdots.
     \]
     Moreover, if the highest coefficient of  $p_{n}(x)$ is denoted by $k_n,$ we have
     \[
     A_{n} = \frac{k_{n}}{k_{n-1}}, \qquad C_{n} = \frac{A_{n}}{A_{n-1}} = \frac{k_{n} k_{n-2}}{k_{n-1}^{2}}.
     \]
 \end{theorem}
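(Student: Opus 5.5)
The plan is to use orthogonality together with the fact that multiplication by $x$ is self-adjoint for the inner product $\langle f,g\rangle=\int f g\,d\mu$ defined by the weight. Here $\mu$ is a positive measure with infinite support, for instance $(1-t^2)^{(n-3)/2}dt$. The family $\{p_n\}$ is taken with $\deg p_n=n$ and $p_n$ orthogonal to all polynomials of degree less than $n$. I also assume the normalization is such that each leading coefficient $k_n$ is positive, which is the standard convention. With this normalization the positivity of $A_n$ is automatic, and positivity of $C_n$ will come out of the argument.

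First, set $A_n=k_n/k_{n-1}$. Then $q:=p_n-A_n x p_{n-1}$ has degree at most $n-1$, so we can expand it as $q=\sum_{j\le n-1} c_j p_j$ with $c_j=\langle q,p_j\rangle/\langle p_j,p_j\rangle$. For $j\le n-3$ we have $\langle p_n,p_j\rangle=0$. Also $\langle xp_{n-1},p_j\rangle=\langle p_{n-1},xp_j\rangle=0$, because $xp_j$ has degree $j+1<n-1$. Hence only $j=n-1$ and $j=n-2$ survive, which gives $p_n=(A_nx+B_n)p_{n-1}-C_np_{n-2}$ with $B_n=c_{n-1}$ and $C_n=-c_{n-2}$.

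Next, compute $C_n$. Taking the inner product of the recurrence with $p_{n-2}$ gives
\[
C_n\langle p_{n-2},p_{n-2}\rangle = A_n\langle p_{n-1},xp_{n-2}\rangle .
\]
We can write $xp_{n-2}=(k_{n-2}/k_{n-1})p_{n-1}+(\text{lower degree})$, so $\langle p_{n-1},xp_{n-2}\rangle=(k_{n-2}/k_{n-1})h_{n-1}$, where $h_m=\langle p_m,p_m\rangle>0$. Therefore
\[
C_n=\frac{A_n}{A_{n-1}}\,\frac{h_{n-1}}{h_{n-2}} .
\]
This is positive, and in the orthonormal normalization it reduces to the stated $C_n=A_n/A_{n-1}=k_nk_{n-2}/k_{n-1}^2$.

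The main obstacle is this normalization issue. The clean formula $C_n=A_n/A_{n-1}$ holds exactly when $h_{n-1}=h_{n-2}$, for example for orthonormal polynomials. So I would state the orthonormal normalization as the intended reading, and note that for other normalizations one gets the general formula with the factor $h_{n-1}/h_{n-2}$.
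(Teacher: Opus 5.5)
The paper gives no proof of this statement. It is quoted as a classical fact (Theorem 3.2.1 in Szeg\H{o}'s \emph{Orthogonal Polynomials}, where $\{p_n\}$ is the orthonormal system with $k_n>0$). Your argument is the standard proof and it is correct: subtract $A_n x p_{n-1}$, expand in the $p_j$, eliminate $j\le n-3$ via $\langle xp_{n-1},p_j\rangle=\langle p_{n-1},xp_j\rangle$, and pair with $p_{n-2}$ to get $C_n$.

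Your normalization caveat is also correct and genuinely needed. For an arbitrary orthogonal family, as the paper literally states it, $A_n>0$ fails when leading coefficients change sign, and $C_n=A_n/A_{n-1}$ fails in general. For example, the Legendre polynomials with $P_n(1)=1$ satisfy $nP_n=(2n-1)xP_{n-1}-(n-1)P_{n-2}$. This gives $C_n=(n-1)/n$, while $A_n/A_{n-1}=\frac{(2n-1)(n-1)}{n(2n-3)}$. The discrepancy is exactly your factor $h_{n-1}/h_{n-2}=(2n-3)/(2n-1)$.
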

 Next, we give a very useful formula for orthogonal polynomials which gets used in the sphere packing bounds as well, for instance, see \cite{K-L}.
 \begin{theorem}[Christoffel--Darboux formula]
  With above notation, we have
  \[
  \sum_{i=0}^{n} p_{n}(x)p_{n}(y) = \frac{k_{n}}{k_{n+1}} \dfrac{p_{n+1}(x)p_{n}(y) - p_{n}(x)p_{n+1}(y)}{x-y}
  \]
 \end{theorem}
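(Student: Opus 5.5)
We read the displayed identity as the standard one for an \emph{orthonormal} family, namely
\[
\sum_{i=0}^{n} p_{i}(x)p_{i}(y) = \frac{k_{n}}{k_{n+1}} \dfrac{p_{n+1}(x)p_{n}(y) - p_{n}(x)p_{n+1}(y)}{x-y},
\]
so the summand is $p_i(x)p_i(y)$ rather than $p_n(x)p_n(y)$. Orthonormality is the normalization under which $C_n = A_n/A_{n-1}$ in the three-term recurrence stated just above. The plan is to multiply each summand by $x-y$, rewrite it with the three-term recurrence, and observe that the resulting sum telescopes.

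\textbf{Step 1: the constants.} We confirm that $C_n=A_n/A_{n-1}$ holds exactly under orthonormality. Take the inner product of the recurrence $p_n=(A_nx+B_n)p_{n-1}-C_np_{n-2}$ with $p_{n-2}$. Orthogonality gives
\[
0=A_n\langle xp_{n-1},p_{n-2}\rangle - C_n.
\]
Next, $\langle xp_{n-1},p_{n-2}\rangle=\langle p_{n-1},xp_{n-2}\rangle$. Here $xp_{n-2}=(k_{n-2}/k_{n-1})p_{n-1}+(\text{lower degree})$, so this inner product equals $k_{n-2}/k_{n-1}=1/A_{n-1}$. Hence $C_n=A_n/A_{n-1}$. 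We adopt the conventions $p_{-1}=0$ and $A_0$ arbitrary, so that the recurrence also covers $n=1$ in the form $p_1=(A_1x+B_1)p_0$. This holds because $p_1$ has degree one and $p_0$ is constant.

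\textbf{Step 2: the key identity.} Solve the recurrence (with index shifted to $i+1$) for $xp_i$:
\[
xp_i=\frac{1}{A_{i+1}}p_{i+1}-\frac{B_{i+1}}{A_{i+1}}p_i+\frac{1}{A_i}p_{i-1},
\]
where we used $C_{i+1}/A_{i+1}=1/A_i$. Apply this at $x$, multiply by $p_i(y)$, and subtract the same expression with $x$ and $y$ interchanged. The $B_{i+1}$ terms cancel because they are symmetric in $x,y$. Writing
\[
D_i=\frac{1}{A_{i+1}}\bigl(p_{i+1}(x)p_i(y)-p_i(x)p_{i+1}(y)\bigr),
\]
we obtain
\[
(x-y)p_i(x)p_i(y)=D_i-D_{i-1},
\]
with $D_{-1}=0$ because $p_{-1}=0$.

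\textbf{Step 3: telescope.} Summing the identity of Step 2 over $i=0,\dots,n$ gives $(x-y)\sum_{i=0}^n p_i(x)p_i(y)=D_n$. Since $1/A_{n+1}=k_n/k_{n+1}$, dividing by $x-y$ for $x\neq y$ yields the formula. The case $x=y$ follows by continuity, and the limit gives the usual confluent form.

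The only real obstacle is bookkeeping. One must be careful with the normalization in Step 1 and with the boundary term at $i=0$. In particular, the identity fails as literally printed unless the $p_i$ are orthonormal; for merely orthogonal families each term must be divided by $\|p_i\|^2$.
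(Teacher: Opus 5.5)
The paper gives no proof of this statement. It is quoted as the classical Christoffel--Darboux formula, and the printed summand $p_n(x)p_n(y)$ is a typo for $p_i(x)p_i(y)$.

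Your reading is the right repair. The relation $C_n = A_n/A_{n-1}$ in the preceding theorem pins down the normalization: your Step 1 computation, done for a merely orthogonal family, gives $C_n = \frac{A_n}{A_{n-1}}\cdot\frac{\|p_{n-1}\|^2}{\|p_{n-2}\|^2}$, so that relation forces all norms to be equal.

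Your telescoping argument via the three-term recurrence is the standard textbook proof (as in Szeg\H{o}) and is correct, including the boundary convention $p_{-1}=0$ at $i=0$. Steps 2 and 3 use only $C_{i+1}/A_{i+1}=1/A_i$, which the paper already records, so Step 1 serves only as a consistency check.

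One small correction to your closing remark. For a merely orthogonal family with $h_i=\|p_i\|^2$, the identity reads $\sum_{i=0}^{n} p_i(x)p_i(y)/h_i = \frac{k_n}{k_{n+1}h_n}\cdot\frac{p_{n+1}(x)p_n(y)-p_n(x)p_{n+1}(y)}{x-y}$. So dividing each summand by $\|p_i\|^2$ is not enough: the right-hand side also picks up a factor $1/h_n$.
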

It is amazing that an arcane property like orthogonality gives us a lot of information about the zero set of the polynomials that is, all roots of orthogonal polynomials are real, simple, and they interlace. More precisely,

\begin{theorem}\label{interlacing}
    Let $\{p_{n}\}_{n \in \mathbb{Z}\geq 0}$ be a collection of orthogonal polynomials with respect to the measure $\mu$ in the interval $[a,b].$ Then
    \begin{enumerate}
        \item The zeros of $p_{n}$ are real, distinct, and are contained in $(a,b).$
        \item Let $x_{1}< x_{2}<\cdots < x_{n}$ be the zeros of $p_{n}$. Then each interval $[x_{i}, x_{i+1}],\ i=0,1,2,\cdots, n,$ contains exactly one zero of $p_{n+1}.$
        \item Between two zeros of $p_{n}$ there is at least one zero on $p_{m},\ m>n.$
    \end{enumerate}
\end{theorem}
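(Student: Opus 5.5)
We prove the three assertions of Theorem \ref{interlacing} in order, working only with orthogonality against $\mu$ and the three-term recurrence stated above. We assume $\mu$ is a positive measure on $[a,b]$ with infinite support, so that $\int q^2\,d\mu>0$ for every nonzero polynomial $q$. We normalize the leading coefficients $k_n>0$.

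\textbf{Step 1: reality and simplicity of the zeros.} Let $x_1,\dots,x_m$ be the points of $(a,b)$ where $p_n$ changes sign. Put $q(x)=\prod_{i=1}^m (x-x_i)$. Then $p_n q$ has constant sign on $[a,b]$ and is not identically zero on the support of $\mu$, so $\int p_n q\,d\mu\neq 0$. If $m<n$, then $\deg q<n$, and orthogonality gives $\int p_n q\,d\mu=0$, a contradiction. Hence $m=n$. Since $\deg p_n=n$, all $n$ zeros are real, simple and lie in $(a,b)$.

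\textbf{Step 2: interlacing of consecutive polynomials.} We use the Christoffel--Darboux formula in its correct form
\[
\sum_{i=0}^{n} \frac{p_i(x)p_i(y)}{h_i}=\frac{k_n}{k_{n+1}h_n}\,\frac{p_{n+1}(x)p_n(y)-p_n(x)p_{n+1}(y)}{x-y},\qquad h_i=\int p_i^2\,d\mu.
\]
The displayed version above has a typo in the summand and omits the normalization $h_i$. Letting $y\to x$ gives
\[
p_{n+1}'(x)p_n(x)-p_n'(x)p_{n+1}(x)>0 \quad\text{for all real } x,
\]
because the left side equals a positive constant times $\sum_{i\le n} p_i(x)^2/h_i\ge 1/h_0>0$.

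Now evaluate this at consecutive zeros $y_j<y_{j+1}$ of $p_{n+1}$. There it reads $p_{n+1}'(y_j)\,p_n(y_j)>0$. The derivative $p_{n+1}'$ alternates in sign at consecutive simple zeros, so $p_n$ must also alternate in sign between $y_j$ and $y_{j+1}$. Therefore $p_n$ has a zero in each open interval $(y_j,y_{j+1})$. Counting, $p_n$ has $n$ zeros and there are $n$ such intervals, so each interval contains exactly one. Equivalently, the zeros of $p_{n+1}$ strictly interlace those of $p_n$.

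This also gives part (2) as intended. The stated indexing $i=0,\dots,n$ should be read with conventions $x_0=a$, $x_{n+1}=b$, taking open intervals. The argument shows that $p_{n+1}$ has exactly one zero in each of the $n+1$ intervals cut out by the zeros of $p_n$.

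\textbf{Step 3: a zero of $p_m$ between zeros of $p_n$ for $m>n$.} This is the main obstacle, since Step 2 only compares neighbours and interlacing is not transitive in any naive sense. I would argue by contradiction.

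Suppose consecutive zeros $x_i<x_{i+1}$ of $p_n$ bound an interval $I$ containing no zero of $p_m$. Consider the degree-$(n-2)$ polynomial
\[
q(x)=\frac{p_n(x)}{(x-x_i)(x-x_{i+1})}.
\]
Then $p_n q=(x-x_i)(x-x_{i+1})\,q^2$, which is $\le 0$ on $I$ and $\ge 0$ off $I$. The idea is to compare with $p_m$ via a Gauss-type quadrature, or alternatively a Sturm-type argument, as follows.

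Take the Gauss quadrature based on the $m$ zeros of $p_m$. It has positive weights and is exact for polynomials of degree at most $2m-1$. Apply it to $f=(x-x_i)(x-x_{i+1})\,q(x)^2=p_n q$, which has degree $2n-2\le 2m-1$. Then $\int f\,d\mu=0$, since $\deg q<n$.

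On the other hand, no node lies in $I$, so $f\ge 0$ at every node. Hence the quadrature sum is $\ge 0$. Since the quadrature is exact, the sum equals $\int f\,d\mu=0$, which forces $f$ to vanish at every node.

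To push this to a strict contradiction, I would perturb $q$ so that it stays of degree $\le m-2$ but does not vanish at the nodes. For example, replace $q^2$ by $q^2+\epsilon r$, where $r\ge 0$ is supported away from $I$ in a polynomial sense and chosen so that $\deg\le 2m-3$. Making this perturbation precise is where I expect the difficulty. A fallback is induction on $m$ using the recurrence and Step 2, tracking sign changes of $p_m$ at the zeros of $p_n$.
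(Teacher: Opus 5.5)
The paper states this classical result without proof, so the comparison is with the standard argument. Your Steps 1 and 2 are correct and standard. The sign-change argument gives (1). The confluent Christoffel--Darboux inequality $p_{n+1}'p_n-p_n'p_{n+1}>0$ gives strict interlacing, and hence (2) with the convention $x_0=a$, $x_{n+1}=b$. Your corrections of the displayed Christoffel--Darboux formula and of the indexing in (2) are right. One small fix: the lower bound on the sum should read $p_0^2/h_0$ unless $p_0\equiv 1$.

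Step 3, however, is left unfinished, and the repair you propose would not work. Replacing $q^2$ by $q^2+\epsilon r$ destroys the identity $\int f\,d\mu=0$, which came precisely from $f$ being $p_n$ times a polynomial of degree $<n$. Also, ``supported away from $I$'' has no meaning for a polynomial. The missing observation is a one-line count. Since $p_n=(x-x_i)(x-x_{i+1})q$, the zero set of $f=p_nq$ is exactly the zero set of $p_n$, which has $n$ points. The Gauss nodes $z_1,\dots,z_m$ are $m>n$ distinct points, so some node $z_k$ is not a zero of $p_n$. Such a $z_k$ is neither $x_i$ nor $x_{i+1}$, and by hypothesis it is not in $(x_i,x_{i+1})$. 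Hence $(z_k-x_i)(z_k-x_{i+1})>0$ and $q(z_k)\neq 0$, so $f(z_k)>0$. Every other node lies outside the open interval, so every term of the quadrature sum is nonnegative. With positive weights $\lambda_j$ and exactness in degree $2n-2\le 2m-1$,
\[
0=\int f\,d\mu=\sum_{j=1}^m \lambda_j f(z_j)\ \ge\ \lambda_k f(z_k)>0,
\]
a contradiction. This is the classical Gauss-quadrature proof, and no perturbation or induction on $m$ is needed. You should still justify or cite two facts: positivity of the weights ($\lambda_j=\int \ell_j^2\,d\mu$ for the Lagrange basis $\ell_j$ at the nodes) and exactness of the $m$-point rule up to degree $2m-1$. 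Note also that (3) only has content for $n\ge 2$.
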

 Here, we   take a detour to assimilate some finer properties of orthogonal polynomials such as more refined results on location of zeros of these polynomials, Darboux formula for asymptotics proving \textit{concentration of measure}-type results. We start with the following density results for zero set of orthogonal polynomials.
 \begin{theorem} \label{density}
     Let $\{p_{n}\}_{n\in \mathbb{Z}\geq 0}$ be a set of orthogonal polynomials defined on a finite interval $[a,b]$ with respect to the measure $\mu.$ Let $[c,d]$ be a subinterval of $[a,b]$ with the property that $\int_{[c,d]} d\mu(x) > 0.$ Then there exists an integer $n_{0}$ such that, for all $n\geq n_{0}$ we have $Z(p_{n}(x)) \cap [c,d] \neq \emptyset$, where $Z(p_{n}(x))$ denotes the zero set of the polynomials $p_{n}$
 \end{theorem}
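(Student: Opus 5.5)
Suppose the statement fails. Then there is a subinterval $[c,d]\subseteq[a,b]$ with $\int_{[c,d]}d\mu>0$ such that infinitely many of the $p_n$ have no zero in $[c,d]$. I will derive a contradiction from orthogonality combined with a suitable weighted polynomial. Shrinking $[c,d]$ slightly if necessary (any closed subinterval of positive measure will do), I may assume $a<c<d<b$, or handle the endpoints separately; the argument below does not really depend on this.

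Fix such an $n$, and let $x_1<\dots<x_n$ be the zeros of $p_n$. By Theorem~\ref{interlacing} these are real and simple and lie in $(a,b)$. Since none lies in $[c,d]$, the interval $[c,d]$ sits inside a single gap $(x_k,x_{k+1})$, where I allow $x_0=a$ and $x_{n+1}=b$. The plan is to use the Gauss quadrature attached to $p_n$. There are positive Christoffel weights $\lambda_i^{(n)}$ such that
\[
\int q\,d\mu=\sum_i \lambda_i^{(n)} q(x_i)\qquad\text{for all } \deg q\le 2n-1 .
\]
This follows from orthogonality in the standard way: divide $q$ by $p_n$ and use that the remainder is interpolated at the nodes.

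The key step is to show that the Gauss measure $\sum_i\lambda_i^{(n)}\delta_{x_i}$ cannot give mass $\geq \int_{[c,d]}d\mu$ to a neighborhood of $[c,d]$ when $[c,d]$ lies inside a gap. I would use the Chebyshev--Markov--Stieltjes separation inequalities. They say that for consecutive zeros,
\[
\sum_{i\le k}\lambda_i^{(n)}\le \mu([a,x_{k+1}))\quad\text{and}\quad \mu([a,x_{k}])\le \sum_{i\le k}\lambda_i^{(n)} .
\]
To prove these, take the Hermite interpolating polynomial of degree $2n-2$ that agrees with $\mathbf 1_{(-\infty,x_k]}$ at all the nodes, with derivative zero there. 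This polynomial majorizes, respectively minorizes, the step function. Its existence and sign properties come from a Rolle-type counting argument; that counting is the technical heart and the main obstacle.

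Granting these inequalities, suppose the gap $(x_k,x_{k+1})$ contains $[c,d]$. Then
\[
\mu([c,d])\le \mu((x_k,x_{k+1}))\le \sum_{i\le k+1}\lambda_i-\sum_{i\le k-1}\lambda_i=\lambda_k^{(n)}+\lambda_{k+1}^{(n)} .
\]
So it remains to show that $\max_i\lambda_i^{(n)}\to 0$. For this I use the extremal characterization
\[
\lambda_i^{(n)}=\min\Bigl\{\int q^2\,d\mu:\ \deg q\le n-1,\ q(x_i)=1\Bigr\}.
\]
A fixed-degree Fejér/Jackson-type polynomial peaked at $x_i$ makes this small, and uniformly so: take $q(x)=\bigl(1-(x-x_i)^2/(b-a)^2\bigr)^{m}$ with $2m\le n-1$. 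Its square is bounded by $1$ and tends to $0$ off a shrinking neighborhood of $x_i$. Hence $\lambda_i^{(n)}\le \mu(\{|x-x_i|<\delta\})+o(1)$ uniformly in $i$. This does not give $0$ if $\mu$ has atoms; in that case a gap near an atom still has positive mass, and I would argue directly instead. If an atom $y$ lies in $[c,d]$, quadrature applied to $\prod_i (x-x_i)^2$ shows $\int p_n^2\,d\mu$ must vanish at nodes only. A cleaner route is to apply quadrature to $q=\prod_{i\ne k,k+1}(x-x_i)^2\cdot(x-x_k)(x-x_{k+1})$, of degree $2n-2$. This $q$ vanishes at all nodes, is $\le 0$ on the gap and $\ge0$ elsewhere, so $\int q\,d\mu=0$. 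Hence $\mu$ of the open gap must balance the positive part, and taking a positive-measure subinterval inside the gap with margins forces an inequality that fails for large $n$ via the weight bound above.

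I expect the Markov--Stieltjes step to be the main obstacle. I would present it via this sign-changing polynomial argument rather than the full Hermite construction.
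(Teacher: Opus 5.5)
The paper gives no proof of this statement. It is quoted as classical background and applied only to Gegenbauer-type weights, which have no atoms.

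\textbf{What works.} For atomless $\mu$ your argument is correct. The Chebyshev--Markov--Stieltjes inequalities give $\mu([c,d])\le\lambda_k^{(n)}+\lambda_{k+1}^{(n)}$, or $\lambda_1^{(n)}$, $\lambda_n^{(n)}$ in the end gaps. Your bound $\lambda_i^{(n)}\le\int q^2\,d\mu$ with $q=(1-(x-x_i)^2/(b-a)^2)^m$ then gives $\max_i\lambda_i^{(n)}\to0$, because the distribution function of an atomless measure on $[a,b]$ is uniformly continuous. Two small corrections:
\begin{itemize}
\item The Hermite majorant of degree $2n-2$ may impose $P'(x_i)=0$ only for $i\neq k$. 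With derivative conditions at all $n$ nodes it is overdetermined.
\item The sign-changing polynomial you propose as a substitute does not yield these inequalities.
\end{itemize}

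\textbf{The gap: atoms.} The atomic case is a genuine gap, and it cannot be filled. Your paragraph on atoms contains no working argument.
\begin{itemize}
\item Quadrature is not exact for $\prod_i(x-x_i)^2$. It has degree $2n$, and indeed the quadrature sum is $0$ while the integral is positive.
\item The identity $\int q\,d\mu=0$ for $q=\prod_{i\neq k,k+1}(x-x_i)^2(x-x_k)(x-x_{k+1})$ is correct. But it only balances $|q|$-weighted mass inside and outside the gap.
\item To reach a contradiction you invoke the \emph{weight bound above}, and that is exactly what fails. By the same separation inequalities, the two nodes bracketing an atom of mass $m$ carry total weight at least $m$.
\end{itemize}

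\textbf{The statement is false as written.} Two counterexamples:
\begin{itemize}
\item Take $\mu=\mathbf{1}_{[-1,0]}\,dx+\delta_{1/2}$ on $[a,b]=[-1,1]$ and $[c,d]=[1/2,1]$, so $\mu([c,d])=1$. If $p_n(x_j)=0$ with $x_j\ge 1/2$, then $0=\int p_n\cdot\frac{p_n}{x-x_j}\,d\mu=\int\frac{p_n^2}{x-x_j}\,d\mu<0$. So no $p_n$ ever vanishes in $[c,d]$; your bound merely reads $1\le\lambda_n^{(n)}$, which is true.
\item Even if $[a,b]$ is required to be the hull of the support, take $\mu=\mathbf{1}_{[-2,-1]\cup[1,2]}\,dx+\delta_{-1/2}+\delta_{1/2}$ and $[c,d]=[-1/2,-1/4]$. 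Since $\mu((-1/2,1/2))=0$, your own sign-changing $q$ argument, applied to two putative zeros in $[-1/2,1/2]$, shows $p_n$ has at most one zero there. Parity of $\mu$ forces that zero, if any, to be $0$.
\end{itemize}

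\textbf{The correct statement and a short proof.} The right hypothesis is $\mu((c,d))>0$. For atomless $\mu$ this coincides with $\mu([c,d])>0$, so the paper's use is unaffected. Under this hypothesis no separation theorem is needed:
\begin{itemize}
\item Choose a continuous $0\le f\le 1$ supported in $(c,d)$ with $\int f\,d\mu=3\eta M>0$, where $M=\mu([a,b])$.
\item Choose a polynomial $P$ with $\sup_{[a,b]}|P-f|<\eta$, and take $n$ with $2n-1\ge\deg P$.
\item If $p_n$ had no zero in $(c,d)$, then $f$ would vanish at every node. Positivity of the Gauss weights would give $2\eta M<\int P\,d\mu=\sum_i\lambda_i^{(n)}P(x_i)<\eta M$, a contradiction.
\end{itemize}
This handles atoms and makes the Chebyshev--Markov--Stieltjes step unnecessary.
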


\begin{remark}
    One immediately sees that, for any $\epsilon>0$ and an interval $[c,d] \subset [-1,1]$ of length $\epsilon,$ the integral $\int_{[c,d]} d\mu(x) > 0,$ where $d\mu(x) = (1-x^{2})^{(n-3)/2} dx.$ Thus, the zero set of Gegenbauer polynomials is dense in $[-1,1].$ 
\end{remark}

Now, we get some meaningful bounds on the distribution of zeros of the polynomials at hand-Jacobi polynomials with parameters $\alpha$ and $\beta$:
\begin{theorem}
    Let $\alpha, \beta > -1,$ and let $0< \theta_{1}< \theta_{2}<\cdots < \theta_{n} <\pi$ be the zeros of $P_{n}^{(\alpha, \beta)}(\cos{\theta}).$ Then
    \[
    \theta_{j} = \frac{1}{n} (\pi j + \operatorname{O}(1)),
    \]
    where $O(1)$ is uniformly bounded for all $j=1,2,\cdots, n$, and $n\in \mathbb{N}.$
\end{theorem}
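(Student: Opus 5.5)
The statement is the classical Szegő-type result on the zeros of Jacobi polynomials, and I would prove it with a Sturm--Liouville phase (Prüfer) argument applied to the trigonometric form of the Jacobi differential equation. Put $N = n + \frac{\alpha+\beta+1}{2}$ and
\[
u(\theta) = \left(\sin\tfrac{\theta}{2}\right)^{\alpha+1/2}\left(\cos\tfrac{\theta}{2}\right)^{\beta+1/2} P_n^{(\alpha,\beta)}(\cos\theta).
\]
A standard computation from the Jacobi equation gives $u'' + (N^2 + q(\theta))u = 0$ on $(0,\pi)$, where
\[
q(\theta) = \frac{1/4-\alpha^2}{4\sin^2(\theta/2)} + \frac{1/4-\beta^2}{4\cos^2(\theta/2)}.
\]
The prefactor is nonzero on $(0,\pi)$, so the zeros of $u$ there are exactly the $\theta_j$. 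The key estimate is $|q(\theta)| \le C\left(\theta^{-2} + (\pi-\theta)^{-2}\right)$, with $C$ depending only on $\alpha,\beta$.

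Next I introduce the Prüfer phase, writing $u = r\sin\varphi$ and $u' = N r\cos\varphi$. Then
\[
\varphi' = N + \frac{q}{N}\sin^2\varphi ,
\]
and the zeros of $u$ are exactly the points where $\varphi \in \pi\mathbb{Z}$. Fix a large constant $K$ and set $I = [K/N,\ \pi - K/N]$. On $I$ we have $|q|/N \le 2C N/K^2 < N/2$ once $K$ is large, so $\varphi$ is strictly increasing on $I$. Integrating the phase equation gives, for $\theta \in I$,
\[
\varphi(\theta) - \varphi(K/N) - N(\theta - K/N) = O\!\left(\int_{K/N}^{\pi-K/N} \frac{|q|}{N}\,dt\right) = O\!\left(\frac{1}{N}\cdot\frac{N}{K}\right) = O(1).
\]
Hence $\varphi(\theta) = N\theta + c_n + O(1)$ on $I$, where $c_n = \varphi(K/N) - K$. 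The zeros in $I$ therefore occur exactly where $N\theta + c_n + O(1)$ crosses successive multiples of $\pi$, one zero per multiple because $\varphi$ is monotone there. So if $m_n$ denotes the number of zeros in $(0, K/N)$, the $j$-th zero in $I$ satisfies $N\theta_j = \pi(j - m_n) - c_n + O(1)$.

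The step I expect to be the main obstacle is the endpoint regions: we need both $m_n = O(1)$ and $c_n = O(1)$ uniformly in $n$. First, apply Sturm comparison on $(0, K/N)$ against $v'' + (N^2 + C N^2/K^2 + |q|)v = 0$; alternatively, rescale $\theta = s/N$, where the equation converges to a Bessel equation on $[0,K]$ (the Mehler--Heine picture). Either way the number of zeros in $(0,K/N)$ is bounded by a constant depending only on $K,\alpha,\beta$, so $m_n = O(1)$. Second, the phase at the left edge satisfies $\varphi(K/N) = O(1)$: with $\varphi(0^+)$ normalized to $0$, $\varphi$ can only advance by $\pi$ per zero and by $O(1)$ in between, because the same rescaling bounds $\int_0^{K/N}\varphi'$ by $O(K)$. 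Thus $c_n = O(1)$. The symmetric argument, via $\theta \mapsto \pi - \theta$ with $\alpha \leftrightarrow \beta$, bounds the number of zeros in $(\pi - K/N, \pi)$.

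Combining these, for zeros in $I$ we get $N\theta_j = \pi j + O(1)$. For the $O(1)$ many zeros within $K/N$ of either endpoint, the formula holds trivially: their indices are $O(1)$ (near $0$), or $n - O(1)$ (near $\pi$), where $\pi - \theta_j = O(1/N)$. Finally, since $N = n + O(1)$ and $\theta_j \le \pi$, we have $\theta_j = \frac{1}{N}(\pi j + O(1)) = \frac{1}{n}(\pi j + O(1))$, with the $O(1)$ uniform in $j$ and $n$.
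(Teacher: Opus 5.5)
The paper gives no proof of this statement. It is quoted as classical background on Jacobi polynomials (it is Szeg\H{o}'s theorem), so your argument has to stand on its own.

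The interior part is correct: the Liouville normal form, the modified Pr\"ufer equation $\varphi' = N + \frac{q}{N}\sin^2\varphi$, and the estimate $\varphi(\theta) = N\theta + c_n + O(1/K)$ on $I=[K/N,\pi-K/N]$.

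The gap is in the endpoint step you flagged.
\begin{itemize}
\item Option (a) fails as written. Near $0$ one has $q(\theta)\approx (1/4-\alpha^2)\theta^{-2}$. So for $\alpha^2>1/2$ (e.g.\ $\alpha=1$, where $|q|\approx \frac{3}{4}\theta^{-2}$) the comparison equation $v''+(N^2+CN^2/K^2+|q|)v=0$ is oscillatory at the singular point $0$. Its solutions have infinitely many zeros in $(0,K/N)$, so Sturm comparison gives no bound on $m_n$. What keeps $u$ non-oscillatory is the sign of $q$: the one-sided bound $q(\theta)\le \frac{1}{4\theta^2}+C$ on $(0,\pi/2]$, which comes from $1/4-\alpha^2\le 1/4$. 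Replacing $q$ by $|q|$ destroys it.
\item Option (b) is sound only if you actually invoke the Mehler--Heine formula for $P_n^{(\alpha,\beta)}(\cos(z/n))$ together with Hurwitz's theorem. Convergence of the rescaled equations alone does not control zeros near the singular endpoint.
\item Your justification of $c_n=O(1)$ is circular. The integral $\int_0^{K/N}\varphi'$ equals $\varphi(K/N)-\varphi(0^+)$, which is exactly the quantity to be bounded.
\end{itemize}

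All of this can be avoided with facts you already have at hand.
\begin{itemize}
\item Wherever $\sin\varphi=0$ one has $\varphi'=N>0$, so $\varphi$ crosses each multiple of $\pi$ at most once, and only upward.
\item Since $u\sim c\,\theta^{\alpha+1/2}$ as $\theta\to 0^+$, the limit $\varphi(0^+)$ exists. It can be taken to be $0$, $\pi/2$ or $\pi$ according as $\alpha+1/2$ is positive, zero or negative. So the normalization $\varphi(0^+)=0$ is not always available, but this is harmless.
\item Consequently $\varphi(\theta_j)=j\pi$ exactly, which also removes the index shift by $m_n$.
\item Your interior estimate then shows that the number of zeros in $I$ is $\bigl(\varphi(\pi-K/N)-\varphi(K/N)\bigr)/\pi+O(1)=N-2K/\pi+O(1)$.
\item $P_n^{(\alpha,\beta)}$ has exactly $n$ zeros in $(-1,1)$; you already use this when you say the zeros near $\pi$ have indices $n-O(1)$. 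Hence only $n-N+2K/\pi+O(1)=O(1)$ zeros lie outside $I$. This gives $m_n=O(1)$ with no analysis on $(0,K/N)$.
\item Then $0<\varphi(K/N)\le (m_n+1)\pi$ gives $c_n=O(1)$.
\end{itemize}

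With these replacements the rest of your proof goes through, including the passage from $1/N$ to $1/n$.
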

Finally, we come to the Darboux formula for asymptotics for the Jacobi polynomials.
\begin{theorem}\label{darbouxest}
    Let $\alpha$ and $\beta$ be arbitrary real numbers. Then for $0< \theta < \pi,$
    \[
    P_{n}^{(\alpha, \beta)}(\cos{\theta}) = n^{-1/2} k(\theta) \cos{(N\theta + \gamma)} + O(n^{-3/2}),
    \]
    where 
    \[
    k(\theta) = \frac{1}{\sqrt{\pi}} \sin{\bigg(\frac{\theta}{2}\bigg)}^{-\alpha-1/2} \cos{\bigg(\frac{\theta}{2}\bigg)}^{-\alpha-1/2}, \qquad N= n + \frac{\alpha + \beta + 1}{2},
    \]
    and $ \gamma = (-\alpha + 1/2) \pi/2.$ The bound for the error term holds umformly in the interval $[\epsilon, \pi-\epsilon],$ for every $\epsilon>0.$
\end{theorem}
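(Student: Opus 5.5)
I would follow Darboux's original method of \emph{singularity analysis of the generating function}, as in Szeg\H{o}'s treatment, rather than anything developed earlier in the paper. The paper states this result only as background, so I would prove it from scratch. One point first: the amplitude should involve $\cos(\theta/2)^{-\beta-1/2}$ rather than $\cos(\theta/2)^{-\alpha-1/2}$. I will prove the corrected version, which agrees with the statement as written when $\alpha=\beta$, the Gegenbauer case used in the paper.

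The starting point is the classical generating function
\[
\sum_{n\ge 0} P_n^{(\alpha,\beta)}(x)\,w^n = 2^{\alpha+\beta}R^{-1}(1-w+R)^{-\alpha}(1+w+R)^{-\beta}, \qquad R=\sqrt{1-2xw+w^2},
\]
with $x=\cos\theta$. For $\theta\in[\varepsilon,\pi-\varepsilon]$ the right-hand side is analytic in $|w|<1$. On the unit circle its only singularities are the branch points $w=e^{\pm i\theta}$, where $R$ vanishes. The factors $1\mp w+R$ do not vanish there, because $1-e^{\pm i\theta}\ne 0$ and $1+e^{\pm i\theta}\neq 0$ when $\theta$ is bounded away from $0$ and $\pi$.

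\textbf{Step 1: local expansion.} Near $w=e^{i\theta}$ we have
\[
R=\bigl[(1-we^{-i\theta})(1-we^{i\theta})\bigr]^{1/2} = (1-we^{-i\theta})^{1/2}\,(1-e^{2i\theta})^{1/2}\,(1+O(1-we^{-i\theta})).
\]
I would expand the generating function as
\[
c(\theta)(1-we^{-i\theta})^{-1/2} + c_1(\theta) + c_2(\theta)(1-we^{-i\theta})^{1/2} + O\bigl(|1-we^{-i\theta}|\bigr),
\]
where $c(\theta)=2^{\alpha+\beta}(1-e^{2i\theta})^{-1/2}(1-e^{i\theta})^{-\alpha}(1+e^{i\theta})^{-\beta}$. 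The expansion at $e^{-i\theta}$ is the complex conjugate.

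\textbf{Step 2: comparison functions.} Subtract from the generating function the comparison functions built from the $(1-we^{\mp i\theta})^{-1/2}$ and $(1-we^{\mp i\theta})^{1/2}$ terms. The remainder is then $C^1$ on the closed disk; more precisely, it is a function of the form $(1-we^{\mp i\theta})^{3/2}\cdot(\text{analytic})$, plus analytic pieces. Its Taylor coefficients are therefore $O(n^{-3/2})$, by integrating over $|w|=1$ (or a slightly indented contour) and integrating by parts.

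\textbf{Step 3: coefficients of the comparison functions.} The coefficients of $(1-we^{-i\theta})^{-1/2}$ are $\binom{n-1/2}{n}e^{-in\theta}=(\pi n)^{-1/2}e^{-in\theta}(1+O(1/n))$. The $(1-we^{-i\theta})^{1/2}$ terms contribute only $O(n^{-3/2})$. Adding the two conjugate contributions gives
\[
P_n^{(\alpha,\beta)}(\cos\theta) = 2\,\mathrm{Re}\bigl[c(\theta)e^{-in\theta}\bigr](\pi n)^{-1/2} + O(n^{-3/2}).
\]

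\textbf{Step 4: polar form of $c(\theta)$.} Using
\[
1-e^{i\theta}=-2i\sin(\theta/2)e^{i\theta/2}, \qquad 1+e^{i\theta}=2\cos(\theta/2)e^{i\theta/2}, \qquad 1-e^{2i\theta}=-2i\sin\theta\, e^{i\theta},
\]
one finds $|c(\theta)|$ equal to $\tfrac12\sin(\theta/2)^{-\alpha-1/2}\cos(\theta/2)^{-\beta-1/2}$ times constants that I expect to cancel. The argument of $c$ should be $-\tfrac{\alpha+\beta+1}{2}\theta+(\alpha+\tfrac12)\tfrac{\pi}{2}$. This yields $n^{-1/2}k(\theta)\cos(N\theta+\gamma)$ with $N=n+\frac{\alpha+\beta+1}{2}$ and $\gamma=-(\alpha+\tfrac12)\pi/2$, which matches the statement's $\gamma$ up to the sign convention of the cosine.

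\textbf{Step 5: uniformity.} All the local expansions depend analytically on $\theta$. The radius of the punctured neighborhoods where they are valid is bounded below uniformly, since $|e^{i\theta}-e^{-i\theta}|=2\sin\theta\ge 2\sin\varepsilon$. Hence the implied constants are uniform on $[\varepsilon,\pi-\varepsilon]$.

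\textbf{Main obstacles.} I expect two difficulties. The first is tracking the branches and phases in Step 4 so that $\gamma$ comes out exactly right. The second is justifying the $O(n^{-3/2})$ remainder uniformly in Step 2: the remainder has a $(1-we^{-i\theta})^{3/2}$-type singularity, and I would handle it with an explicit contour estimate. A fallback that confirms the amplitude is the Liouville--Green method. The function
\[
u(\theta)=\sin(\theta/2)^{\alpha+1/2}\cos(\theta/2)^{\beta+1/2}P_n(\cos\theta)
\]
solves $u''+(N^2+q(\theta))u=0$ with $q$ bounded on $[\varepsilon,\pi-\varepsilon]$. Variation of parameters plus Gronwall then gives $u=A\cos N\theta+B\sin N\theta+O(n^{-3/2})$. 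However, $A$ and $B$ would still have to be fixed by the generating-function computation above.
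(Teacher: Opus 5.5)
Your route, singularity analysis of the generating function, is Darboux's own method and the classical proof of this formula. The paper gives no proof; it states the result as background from Szeg\H{o}'s book. Your Step 4 computation is correct:
\begin{itemize}
\item the powers of $2$ cancel, giving $|c(\theta)|=\tfrac12\sin(\theta/2)^{-\alpha-1/2}\cos(\theta/2)^{-\beta-1/2}$;
\item the principal branches are the right ones, since $\mathrm{Re}\,R>0$ and $\mathrm{Re}(1\mp w+R)>0$ in $|w|<1$;
\item $\arg c(\theta)=-\tfrac{\alpha+\beta+1}{2}\theta+(\alpha+\tfrac12)\tfrac{\pi}{2}$.
\end{itemize}
Hence $2\,\mathrm{Re}[c(\theta)e^{-in\theta}](\pi n)^{-1/2}=n^{-1/2}k(\theta)\cos(N\theta+\gamma)$ with $\gamma=-(\alpha+\tfrac12)\tfrac{\pi}{2}$.

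The error is in how you reconcile this with the printed statement. The printed phase $(\tfrac12-\alpha)\tfrac{\pi}{2}$ exceeds your (correct) $\gamma$ by exactly $\pi/2$. So $\cos(N\theta+\gamma_{\mathrm{printed}})=-\sin(N\theta+\gamma)$, and the printed main term is wrong at order $n^{-1/2}$; this is not a sign convention.

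In particular, your claim that the corrected version agrees with the statement as written when $\alpha=\beta$ is false. Take Legendre polynomials ($\alpha=\beta=0$) at $\theta=\pi/2$ with $n$ odd. Then $P_n(0)=0$, while the printed main term is $\sqrt{2/\pi}\,n^{-1/2}\cos\bigl((n+1)\pi/2\bigr)=\pm\sqrt{2/(\pi n)}$. So the statement is false as printed, even in the Gegenbauer case. What you actually prove is the version with both misprints fixed, namely $\cos(\theta/2)^{-\beta-1/2}$ in $k$ and $\gamma=-(\alpha+\tfrac12)\tfrac{\pi}{2}$. You should say so explicitly rather than claim agreement.

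This does not damage the paper. Its only use of the result is that normalized Gegenbauer polynomials tend to $0$ off $\pm1$, which needs only the envelope bound $P_n^{(\alpha,\alpha)}(\cos\theta)=O(n^{-1/2})$ for fixed $\theta\in(0,\pi)$.

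You should also tighten Step 2. Continuity of the derivative on $|w|=1$ plus one integration by parts gives only $o(n^{-1})$, not $O(n^{-3/2})$. Use the explicit form of the remainder instead, in either of two ways:
\begin{itemize}
\item The derivative of $(1-we^{\mp i\theta})^{3/2}h(w)$ is H\"older of order $\tfrac12$ on the circle. Its Fourier coefficients are therefore $O(n^{-1/2})$, and one integration by parts gives $O(n^{-3/2})$.
\item Alternatively, also subtract the $(1-we^{\mp i\theta})^{3/2}$ terms, whose coefficients are $O(n^{-5/2})$. The remainder is then $C^2$ on the circle, and two integrations by parts give coefficients $O(n^{-2})$.
\end{itemize}
In either case the constants are uniform for $\theta\in[\varepsilon,\pi-\varepsilon]$, because $\sin\theta$, $\sin(\theta/2)$ and $\cos(\theta/2)$ are bounded below there.
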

Using Stirling approximation for Gamma function, we can give asymptotics for $P_{k}^{(\alpha, \beta)}(1)$ as follows:
\begin{theorem}[Szeg\H{o} \cite{szego}]
    For the Jacobi polynomials $P_{k}^{(\alpha, \beta)}(x)$
    \[
    P_{k}^{(\alpha, \beta)}(1) = \binom{k +\alpha}{k} = \dfrac{\Gamma(2\alpha + k)}{\Gamma(2\alpha) k!} \sim \frac{1}{\Gamma(2\alpha)} k^{2\alpha -1}.
    \]
\end{theorem}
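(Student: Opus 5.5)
The statement to prove is Szeg\H{o}'s asymptotic
\[
P_k^{(\alpha,\beta)}(1)=\binom{k+\alpha}{k}\sim \frac{k^{\alpha}}{\Gamma(\alpha+1)} .
\]
As printed, the display instead has $\Gamma(2\alpha+k)/(\Gamma(2\alpha)k!)\sim k^{2\alpha-1}/\Gamma(2\alpha)$. That is the same binomial-type identity with $\alpha$ replaced by $2\alpha-1$, which is the normalization that matches Gegenbauer polynomials $C_k^{(\alpha)}(1)=\binom{k+2\alpha-1}{k}$. So I would prove one general identity,
\[
\frac{\Gamma(k+s+1)}{\Gamma(s+1)\,k!}\sim\frac{k^{s}}{\Gamma(s+1)},
\]
and then specialize $s=\alpha$ for the Jacobi case and $s=2\alpha-1$ for the displayed Gegenbauer form.

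First, the exact value at $x=1$. I would use the Rodrigues formula, or equivalently the explicit hypergeometric representation
\[
P_k^{(\alpha,\beta)}(x)=\binom{k+\alpha}{k}\,{}_2F_1\!\Big(-k,\,k+\alpha+\beta+1;\,\alpha+1;\,\tfrac{1-x}{2}\Big).
\]
Setting $x=1$ kills every term of the ${}_2F_1$ series except the constant one. This gives $P_k^{(\alpha,\beta)}(1)=\binom{k+\alpha}{k}$ under the standard normalization.

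Next, rewrite the generalized binomial coefficient as a Gamma ratio:
\[
\binom{k+\alpha}{k}=\frac{\Gamma(k+\alpha+1)}{\Gamma(\alpha+1)\,\Gamma(k+1)}.
\]
For the Gegenbauer version the same step gives $\Gamma(k+2\alpha)/(\Gamma(2\alpha)\Gamma(k+1))$.

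The only analytic step is the ratio asymptotic $\Gamma(k+a)/\Gamma(k+b)\sim k^{a-b}$ as $k\to\infty$. I would derive it from Stirling's formula
\[
\Gamma(z)=\sqrt{2\pi}\,z^{z-1/2}e^{-z}\bigl(1+O(1/z)\bigr).
\]
Taking logarithms,
\[
\log\Gamma(k+a)-\log\Gamma(k+b)=(k+a-\tfrac12)\log(k+a)-(k+b-\tfrac12)\log(k+b)-(a-b)+o(1).
\]
Expand $\log(k+a)=\log k+a/k+O(k^{-2})$, and likewise for $b$. The $k\cdot a/k$ and $k\cdot b/k$ terms contribute $a-b$, which cancels the $-(a-b)$. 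What remains is $(a-b)\log k+o(1)$. With $a=\alpha+1,\ b=1$, this yields $\binom{k+\alpha}{k}\sim k^{\alpha}/\Gamma(\alpha+1)$. With $a=2\alpha,\ b=1$, it yields $k^{2\alpha-1}/\Gamma(2\alpha)$.

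I expect the main obstacle to be bookkeeping rather than analysis. The indices in the statement conflate Jacobi and Gegenbauer normalizations, so I would state explicitly which parameter is meant before invoking the ratio asymptotic. The expansion of the logarithms must also be done carefully enough that the $O(1)$ terms cancel, so that the constant comes out as exactly $1/\Gamma(s+1)$.
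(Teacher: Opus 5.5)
Your proof is correct and follows the route the paper indicates: the paper gives no argument beyond citing Szeg\H{o} and invoking Stirling's formula, which is exactly your $\Gamma(k+a)/\Gamma(k+b)\sim k^{a-b}$ step, with the hypergeometric evaluation at $x=1$ supplying the exact value. You are also right that the displayed chain is only consistent when $\alpha=1$, so separating the Jacobi value $\binom{k+\alpha}{k}\sim k^{\alpha}/\Gamma(\alpha+1)$ from the Gegenbauer value $\binom{k+2\alpha-1}{k}\sim k^{2\alpha-1}/\Gamma(2\alpha)$ is the correct reading of the statement.
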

 We shall use spherical harmonics to explain further properties of the ultraspherical polynomials. Recall that as a representation of $O(n),$ we can decompose $L^{2}(S^{n-1})$ as 
 \[
 L^{2}(S^{n-1}) = \overline{\bigoplus_{k\geq 0} W_{k}}^{L^2(S^{n-1})},
 \]
 where $W_k$ be the irreducible representation of $O(n)$ which consists of degree $k$ spherical harmonics. 

For $x\in S^{n-1},$ consider the evaluation functional taking $f\in W_k$ to $f(x)$. By Hilbert space duality, there exists a unique reproducing kernel $w_{k,x} \in W_{k}$, that is,
\[
f(x) = \langle f, w_{k,x}\rangle, \quad \text{for all}\ f\in W_k.
\]
\begin{remark}{Invariance of reproducing kernel under $O(n)$}
    For any orthogonal matrix $T$ with $Tx = x$, we have $Tw_{k,x} = w_{k,x}$. This is because: for all $f\in W_{k},$
    \[
    \langle f, w_{k,x}\rangle = f(x) = f(Tx) = (T^{*}f)(x) = \langle T^{*}f, w_{k,x}\rangle = \langle f, Tw_{k,x}\rangle
    \]
    holds and hence $ w_{k,x} = Tw_{k,x}.$
\end{remark}
The preceding remark can be reframed in the language of inner product. That is, $w_{k,x}(y)$ can depend only on the distance between $x$ and $y$, and therefore, it must be the function of $\langle x, y\rangle$ alone. Define $P_{k}^{(n)}$ by
\[
 w_{k,x}(y) = P_{k}^{(n)}(\langle x,y\rangle).
\]
Using RKHS property, $P_{k}^{(n)}(\langle x,y\rangle) = \langle  w_{k,x},  w_{k,y}\rangle.$
    \subsection{Positive definite functions on groups}
    There are many notions of positive-definiteness for functions on several class of spaces. But all of these notions essentially come from the notion that we saw in the Theorem \ref{AMK} combined with a group invariance. We start with a purely algebraic definition of positive definiteness.
    \begin{definition}[Positive Definite Function]
        Let $G$ be any group. A complex-valued function $\varphi$ on $G$ is said to be \textbf{positive definite (p.d.)} if for any $n$ and $g_{1},\dots, g_{n} $ in $G$, the matrix $(\varphi(g_{i}^{-1}g_{j}))$ is positive semidefinite.
        \end{definition}
    In other words, the kernel $K:G\times G \to \mathbb{C}$ given by $K(h,g) = \varphi(g^{-1}h)$ is positive definite in the sense of Theorem \ref{AMK}. Strictly speaking, this is a $G$-invariant positive definite kernel on the space $G$.
    In the 1930s, S. Bochner \cite{Bochner} classified all of the positive definite functions on $\mathbb{R}$ in terms of positive finite measures on the $\mathbb{R}$. This result was extended in 1940 simultaneously by Weil, Povzner, and Raikov to classify the positive definite functions on any locally compact abelian group. Subsequently, Godement \cite{godement} extended it to the case of Gelfand pairs  building on the ideas of Gelfand and Raikov \cite{gelfandraikov}. Around the same time, in his 1942 paper \cite{schoenberg42} Schoenberg was interested in understanding the positive definite functions of the form $f \circ \cos : [-1, 1] \to \mathbb{R}$ on a unit sphere $S^{r-1} \subset \mathbb{R}^r$, where $r \geq 2$.

Note that, for $r \ge 2$, we have $\cos(d(x_j, x_k)) = \langle x_j, x_k \rangle$, so $\cos[(d(x_j, x_k))_{j,k}]$ always yields Gram matrices. Hence, $f[-]$ would preserve positivity on a set of positive semi-definite matrices. It was this class of functions that Schoenberg characterized:

\begin{theorem}[Schoenberg \cite{schoenberg42}] \label{schoclassic}
Suppose $f : [-1, 1] \to \mathbb{R}$ is continuous, and $r \ge 2$ is an integer. Then the following are equivalent:
\begin{enumerate}
    \item $(f \circ \cos)$ is positive definite on $S^{r-1}$.
    \item The function $f(x) = \sum_{k=0}^\infty c_k C_k^{((r-2)/2)}(x)$, where $c_k \ge 0, \forall k$, and $\{C_k^{((r-2)/2)}(x) : k \ge 0\}$ denote the Gegenbauer family of orthogonal polynomials.
\end{enumerate}
\end{theorem}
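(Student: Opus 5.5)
The plan is to prove the two directions separately, identifying $S^{r-1}$ with $SO(r)/SO(r-1)$ and using the decomposition $L^2(S^{r-1})=\overline{\bigoplus_k W_k}$ from the discussion of spherical harmonics. Throughout, the zonal kernels $P_k^{(r)}(\langle x,y\rangle)=\langle w_{k,x},w_{k,y}\rangle$ are the reproducing kernels of $W_k$, and they coincide up to positive normalizing constants with the Gegenbauer polynomials $C_k^{((r-2)/2)}$. The reason is that both families are orthogonal with respect to $(1-t^2)^{(r-3)/2}dt$, they have the same degrees, and their values at $1$ are positive. The second fact follows because $P_k^{(r)}(1)=\|w_{k,x}\|^2>0$, while $C_k^{(\lambda)}(1)>0$ is classical. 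For the first fact, note that restriction to $\langle x,\cdot\rangle$ of a zonal harmonic of degree $k$ is orthogonal to every zonal harmonic of lower degree. This gives the orthogonality after projecting surface measure to the $x$-axis.

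\emph{(2)$\Rightarrow$(1).} Each $C_k^{((r-2)/2)}(\langle x_i,x_j\rangle)$ is a positive multiple of the Gram matrix $[\langle w_{k,x_i},w_{k,x_j}\rangle]$, so it is positive semidefinite. Nonnegative combinations of positive semidefinite matrices are positive semidefinite, and pointwise limits of such combinations remain so. Here I read (2) as asserting that the series converges to $f$ pointwise; evaluating at $1$ then gives $\sum c_k C_k(1)<\infty$, so in fact the convergence is absolute and uniform. Hence $[f(\langle x_i,x_j\rangle)]$ is positive semidefinite.

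\emph{(1)$\Rightarrow$(2).} Expand the continuous function $f$ in $L^2([-1,1],(1-t^2)^{(r-3)/2}dt)$ as $f\sim\sum c_kC_k$, and show $c_k\ge 0$. By the addition formula, $C_k(\langle x,y\rangle)$ is a positive multiple of $\sum_m Y_m(x)\overline{Y_m(y)}$ for an orthonormal basis $\{Y_m\}$ of $W_k$. Therefore, by the Funk--Hecke identity,
\[
c_k\,\|C_k\|^2 \propto \iint_{S^{r-1}\times S^{r-1}} f(\langle x,y\rangle)\,C_k(\langle x,y\rangle)\,d\mu(x)\,d\mu(y)\propto \sum_m \iint f(\langle x,y\rangle) Y_m(x)\overline{Y_m(y)}\,d\mu\,d\mu .
\]
Each double integral on the right is a limit of Riemann-type sums $\sum_{i,j} f(\langle x_i,x_j\rangle)a_i\overline{a_j}$ with $a_i=Y_m(x_i)\mu(\text{cell}_i)$. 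This uses the continuity of $f$ and of $Y_m$. Positive definiteness makes each such sum nonnegative, so $c_k\ge0$. For the complex-valued $Y_m$, pass to real and imaginary parts or use real spherical harmonics. The key step is therefore: a continuous positive definite kernel is an integrally positive definite operator on $L^2(S^{r-1})$.

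\emph{Convergence.} This is the step I expect to be the main obstacle. We need to show that the expansion actually converges to $f$ pointwise, not merely in $L^2$. Set $F(x,y)=f(\langle x,y\rangle)$. It is a continuous positive definite kernel on the compact space $S^{r-1}$, and its Mercer eigenexpansion is exactly $\sum_k c_k C_k(\langle x,y\rangle)$, because $W_k$ are the eigenspaces of the invariant integral operator. Mercer's theorem gives absolute and uniform convergence, in particular $\sum c_kC_k(1)=f(1)<\infty$. If I want to avoid invoking Mercer, I would argue directly. Take $x=y$ in the Cesàro or Abel means $\sum_k c_k C_k(1)\rho^k$, which converge to $f(1)$ by continuity of $f$ and positivity of the Poisson-type kernel. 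Since $c_kC_k(1)\ge0$, monotone convergence gives $\sum c_kC_k(1)<\infty$. Then $|C_k(t)|\le C_k(1)$ (Cauchy–Schwarz on $\langle w_{k,x},w_{k,y}\rangle$) gives uniform convergence. The uniform limit agrees with $f$ in $L^2$, and both are continuous, so they are equal.
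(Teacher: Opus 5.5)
The paper does not prove Theorem \ref{schoclassic}. It quotes the result from \cite{schoenberg42} as background. The Schoenberg-type arguments the paper does give (the Krawtchouk limit in Section \ref{discoscho} and the Olshanski discussion in Section \ref{conclusion}) concern the infinite-dimensional sphere, where the answer is a power series rather than a Gegenbauer series. Your argument is the standard analytic proof, and it is correct:
\begin{itemize}
\item Funk--Hecke, the addition formula and Riemann sums give $c_k\ge 0$.
\item Mercer, or a positive summability kernel plus monotone convergence, gives $\sum_k c_kC_k(1)=f(1)<\infty$.
\item The bound $|C_k|\le C_k(1)$ then gives uniform convergence to $f$.
\end{itemize}

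The paper's own framework points to a different route. Because $f$ is continuous, $g\mapsto f(\langle ge_1,e_1\rangle)$ is a continuous positive definite function on $SO(r)$. So the minimal Stinespring (GNS) representation in Theorem \ref{cpdthm} is strongly continuous and decomposes by Theorem \ref{PeterWeyl}. Project the $SO(r-1)$-fixed cyclic vector $v$ onto the isotypic components. Only the class one components $W_k$ survive, and each has a one-dimensional fixed space with spherical function $P_k^{(r)}/P_k^{(r)}(1)$. Hence $f=\sum_k\|v_k\|^2P_k^{(r)}/P_k^{(r)}(1)$ with $\sum_k\|v_k\|^2=\|v\|^2=f(1)$.

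That route gives nonnegativity and summability in one stroke, so the step you flagged as the main obstacle disappears. The cost is Peter--Weyl and the identification of the class one representations of $(SO(r),SO(r-1))$ with the $W_k$. Your proof needs only Funk--Hecke, the addition formula and an approximate identity. It also shows exactly where continuity of $f$ enters, namely the Riemann sums and the Poisson limit. In the group route, continuity is hidden in the continuity of the GNS representation, which is precisely what is missing in the paper's Conjecture \ref{compresthm2}.

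Three small repairs:
\begin{itemize}
\item Your stated justification, positivity of the kernel, only covers Abel--Poisson means or Ces\`aro means of sufficiently high order. Already for $r=3$ the $(C,1)$ Legendre kernel takes the value $-1/4$ at $-1$ for odd $n$. Drop ``Ces\`aro'' or qualify it.
\item The $W_k$ are contained in, not equal to, the eigenspaces of the invariant integral operator, since different $k$ can share an eigenvalue. This is harmless: Mercer's expansion converges absolutely in any orthonormal eigenbasis, so you may group terms by $k$.
\item For $r=2$ the standard $C_k^{(0)}$ vanish identically for $k\ge 1$. So your claim $C_k^{(\lambda)}(1)>0$, and the statement itself, must be read with $C_k^{(0)}$ replaced by the Chebyshev polynomials $T_k$. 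These are the orthogonal family for $(1-t^2)^{-1/2}\,dt$ that your argument actually produces.
\end{itemize}
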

As we shall see, this result can be seen as characterizing the $SO(n)$-invariant kernels on the sphere $S^{n-1},$ which enables us to use the harmonic analysis on the $2$-point homogeneous spaces.
     
    \subsection{Extension problems for positive definite functions}
        The extension problem for partially defined positive definite functions has been considered 
        extensively in other regimes based around convolution going back to Raikov \cite{Raikov1940}, Krein \cite{Krein1949}, Devinatz \cite{devinatz1959extensions}, Rudin \cite{Rudin1963}, McMullen \cite{McMullen1972},
        Bakonyi-Timotin \cite{BakonyiTimotin2011}, Berezanskii-Gorbachuk \cite{BerezanskiiGorbachuk1965}, Nussbaum
        \cite{Nussbaum1975}, Jorgensen-Niedzialomski \cite{jorgensen2015extension}.
        See Jorgensen \cite{jorgensen2016extensions} for a general overview of the modern theory.
        Essentially one takes a partially defined
        kernel on a set $\Gamma$ induced by a function on $\Gamma -\Gamma$ (that is a set convoluted with itself) and attempts to extend to the whole space.
        The analysis is in general somewhat delicate and there are many deeply interesting pathologies and obstructions to extension.
        The approach from the point of view of complete positivity gives a fundamentally different framework which is somewhat immunized to pathology as completely positive maps have a significant amount of rigid algebraic structure, e.g. the Stinespring representation, Arveson extension theorem and so on.
        
        For our particular practical problems, our notion of interpolation is natural, and may provide facility in other regimes.
    \subsection{Representations of Class I}
    Let $G$ be a locally compact group with left Haar measure $dx$ and let $C_{c}(G)$ be the convolution algebra of continuous, complex-valued functions on $G$ with
compact support. Let $H$ be a compact subgroup of $G$ with normalized Haar measure
$dh$ and denote by $C_{c}^{\#}(G)$ (or $C_{c}(H\backslash G/H )$) the space of functions
in $C_{c}(G)$ which are bi-invariant with respect to $H$, i.e. functions f which satisfy
\[f(hxh') = f(x),\quad \text{for}\ x\in G; h,h' \in H.\]
The space $C_{c}^{\#}(G)$ is a subalgebra of the convolution algebra $C_{c}(G)$ and is not commutative, in general. 
\begin{definition}[Gelfand pair]
    The pair $(G,H)$ is said to be a \textbf{Gelfand pair} if the convolution
algebra $C_{c}(H\backslash G/H )$ is commutative.
\end{definition}
 An important example of a Gelfand pair: when $G$ is any LCA group and $H$ is just the identity subgroup, $\{e\}.$ We now define the notion of \textit{spherical functions}, which generalizes characters.
 \begin{definition}[Spherical functions]
     Let $(G,H)$ be a Gelfand pair and let $\varphi$ be a continuous, bi-$H$-invariant function on $G$. The function $\varphi$ is called a \textbf{spherical function} if the functional $\chi$ defined by 
     \[
     \chi(f) = \int_{G} f(x) \varphi(x^{-1}) dx
     \]
     defines a non-trivial character on the convolution algebra $C_{c}(H\backslash G/H )$, i.e., $\chi(f* g)= \chi(f)\chi(g)$ for all $f,g\in C_{c}(H\backslash G/H ).$
 \end{definition}
\begin{definition}[Class one representation]
    A unitary representation $(\pi,\mathcal{H})$ of $G$ is said to be of class one if the subspace of $H$-fixed vectors $\mathcal{H}_{e}$ is non-trivial. 
\end{definition}
Typically, the notion of a class one representation is assumed to be irreducible, e.g. as in the treatments of van Dijk \cite{vanDijk} and Dieudonn\'e \cite{dieudonne}, in the definition, but we will need the definition for general unitary representations. 
\begin{theorem}
    Let $(G,H)$ be a Gelfand pair. The positive-definite spherical
functions on $G$ correspond one-to-one to the equivalence classes of irreducible
unitary representations of $G$ of class one.
\end{theorem}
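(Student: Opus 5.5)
The plan is to realize both directions through the GNS construction attached to a positive definite bi-$H$-invariant function, combined with Schur orthogonality and the Peter--Weyl theorem (Theorem \ref{PeterWeyl}).

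\textbf{From representations to spherical functions.} Take an irreducible unitary class one representation $(\pi,\mathcal{H})$ of $G$. First I show that the space $\mathcal{H}_e$ of $H$-fixed vectors is one-dimensional. Let $P=\int_H \pi(h)\,dh$ be the orthogonal projection onto $\mathcal{H}_e$. The operators $P\pi(f)P$ with $f\in C_c(H\backslash G/H)$ give a $*$-representation of the commutative convolution algebra on $\mathcal{H}_e$. Irreducibility of $\pi$ makes this action irreducible: if $K\subseteq\mathcal{H}_e$ is invariant, then $\overline{\pi(C_c(G))K}$ is $G$-invariant, and projecting it back gives $P\pi(C_c(G))K=P\pi(C_c^{\#}(G))K\subseteq K$. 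A commutative irreducible $*$-algebra acts on a one-dimensional space, so $\dim\mathcal{H}_e=1$. Fixing a unit vector $\xi\in\mathcal{H}_e$, set $\varphi(x)=\langle \pi(x)\xi,\xi\rangle$. This $\varphi$ is positive definite and bi-$H$-invariant. It is spherical because $\pi(f)\xi=P\pi(f)\xi=\chi(f)\xi$ for $f\in C_c^{\#}(G)$, which makes $f\mapsto\chi(f)$ multiplicative. Nontriviality holds because $\chi$ applied to an approximate identity averaged over $H$ is close to $1$. Changing $\xi$ by a phase, or $\pi$ by a unitary equivalence, leaves $\varphi$ unchanged.

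\textbf{From spherical functions to representations.} Start from a positive definite spherical $\varphi$. Apply the GNS construction (Theorem \ref{AMK} with the kernel $K(x,y)=\varphi(y^{-1}x)$): this gives $(\pi_\varphi,\mathcal{H}_\varphi,\xi)$ with $\varphi=\langle\pi_\varphi(\cdot)\xi,\xi\rangle$ and $\xi$ cyclic. Bi-invariance makes $\xi$ $H$-fixed, so $\pi_\varphi$ is class one.

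The main obstacle is irreducibility of $\pi_\varphi$. I would argue as follows. Cyclicity gives that $P\mathcal{H}_\varphi$ is the closure of $P\pi(C_c(G))\xi=\pi(C_c^{\#}(G))\xi$. This equals $\mathbb{C}\xi$, because $\langle\pi(f)\xi,\pi(g)\xi\rangle$ is determined by the character $\chi$ and forces $\pi(f)\xi=\chi(f)\xi$. Now suppose $\mathcal{H}_\varphi=\mathcal{K}_1\oplus\mathcal{K}_2$ is a decomposition into invariant subspaces. The projections onto the $\mathcal{K}_i$ commute with $P$, so $\xi$ lies entirely in one $\mathcal{K}_i$. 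By cyclicity that $\mathcal{K}_i$ is everything. Hence $\pi_\varphi$ is irreducible.

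\textbf{Bijectivity.} The two constructions are mutually inverse. Starting from a representation, GNS applied to $\langle\pi(x)\xi,\xi\rangle$ recovers $\pi$ up to equivalence, since $\xi$ is cyclic by irreducibility. Starting from a function, the associated matrix coefficient of $\pi_\varphi$ is $\varphi$ by construction. Injectivity on equivalence classes comes from the uniqueness of the GNS construction.
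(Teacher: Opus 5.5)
The paper does not prove this statement. It is quoted as classical background, with van Dijk and Dieudonn\'e as the implicit sources, so there is no proof of record to compare against. Your argument is the standard one, and it is correct.

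The closest thing in the paper is Theorem~\ref{cpdthm}. There, $H$-fixedness of the dilating vectors comes from minimality of the Stinespring dilation. In the scalar case this is exactly your GNS computation $\|\pi(h)\xi-\xi\|^2=2\varphi(e)-2\operatorname{Re}\varphi(h)=0$. That route does not reach the steps where the Gelfand-pair and spherical hypotheses actually enter, and your proof does. These are $\dim\mathcal{H}_e=1$, via the commutative $*$-algebra $\pi(C_c^{\#}(G))$ acting irreducibly on $\mathcal{H}_e$, and irreducibility of $\pi_\varphi$, via $P\mathcal{H}_\varphi=\mathbb{C}\xi$ and the fact that projections onto invariant subspaces commute with $P$.

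Three points should be made explicit.

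(i) Your claim that $\langle\pi(f)\xi,\pi(g)\xi\rangle$ ``forces'' $\pi(f)\xi=\chi(f)\xi$ silently uses $\|\xi\|^2=\varphi(e)=1$. Write $\lambda(f)=\langle\pi(f)\xi,\xi\rangle$. Multiplicativity gives $\|\pi(f)\xi\|^2=\lambda(f^{*}*f)=|\lambda(f)|^2$, so the component of $\pi(f)\xi$ orthogonal to $\xi$ has squared norm $|\lambda(f)|^2(1-\varphi(e)^{-1})$. You therefore need the standard fact that a spherical function has $\varphi(e)=1$. Multiplicativity yields $\int_H\varphi(xhy)\,dh=\varphi(x)\varphi(y)$; setting $y=e$ gives $\varphi(x)=\varphi(x)\varphi(e)$.

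(ii) The paper's character is $\chi(f)=\int_G f(x)\varphi(x^{-1})\,dx$, while your eigenvalue is $\lambda(f)=\int_G f\varphi$. For positive definite $\varphi$ one has $\chi(f)=\overline{\lambda(\bar f)}$. Since $\overline{f*g}=\bar f*\bar g$, one is multiplicative exactly when the other is. Nothing changes, but the identification should be stated.

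(iii) Every step uses strong continuity of $\pi$. You need it to form $P=\int_H\pi(h)\,dh$ and $\pi(f)$. You also need it for nondegeneracy of $\pi(C_c(G))$ when you pass from $K\neq 0$ to $\overline{\pi(C_c(G))K}=\mathcal{H}$, and hence to $\mathcal{H}_e\subseteq K$. Continuity is the correct reading of the theorem, since spherical functions are continuous by definition. Note, though, that the paper elsewhere uses ``class one'' for discontinuous representations too. For those the correspondence fails: take $G=S^1$, $H$ trivial, and a discontinuous unitary character.
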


In Theorem \ref{cpdthm}, we give an alternative method of obtaining bi-invariance of Class I representations that is essentially a once line triviality in the language of $C^*$-algebras via a Tomiyama type theorem. In general, adopting the methods of operator algebras / quantum information theory gives a clean perspective on positive definite functions.
        \subsubsection{Relation to relative property $(T)$}
        We follow Brown--Ozawa \cite{brown-ozawa} in this section.
        \begin{definition}
            Let $G$ be a group and $(\pi, \mathcal{H})$ be a unitary representation of $G$. A vector $h\in \mathcal{H}$ is said to be $G-invariant$ if $\pi(g)h=h$ for all $g\in G$. A net $(h_n)$ of unit vectors is almost invariant if $\lim \|\pi(g)h_n-h_n\| = 0$ for every $g\in G$. For a subset $E\subset G$ and $\epsilon>0,$ we say a nonzero $h\in \mathcal{H}$ is \textbf{$(E, \epsilon)$-invariant} if 
            \[
            \sup_{g\in E} \|\pi(g)h-h\| < \epsilon \|h\|.
            \]
        \end{definition}
        In the language of Fell topology, a unitary representation $\pi$ has a nonzero invariant vector if and only if the trivial representation is contained in $\pi$, and $\pi$ has almost invariant vectors if and only if the trivial representation is weakly contained in $\pi$.
        \begin{definition}
            Let $\Gamma \subseteq G$ be a subgroup. We say that the inclusion $\Gamma \subseteq G$ has \textbf{relative property (T)} if any unitary representation $(\pi, \mathcal{H})$ of $G$ which has almost $G$-invariant vectors has a nonzero $\Gamma$-invariant vector. We say $G$ has \textbf  {Kazhdan's property (T)} if the identity inclusion of $G$ into itself has relative property (T).
        \end{definition}
        Note that, (or see \cite{Bekka-Harpe-Valette}[Remark 1.4.4]) if a closed subgroup $H$ of a topological group $G$ has property (T), then the pair $(G,H)$ has the relative property (T).

            A well-known fact is that every compact (Lie) group $G$ has property (T). One way to see it is as follows: Suppose that we have an almost invariant (unit) vector $v$ for a unitary representation $(\pi, \mathcal{H})$. That is, $\sup_{g\in G} \|\pi(g)v - v\| < \epsilon,$ for some $\epsilon \in (0,1).$ Then consider the Bochner integral
            \[
            v_{0} = \int_{G} \pi(g) v\ d\mu(g),
            \]
            where $\mu$ is the (left)-invariant Haar measure on $G$. 
            Note that, 
            \begin{align*}
                \|v-v_{0}\| &= \|v - \int_{G} \pi(g)v\ d\mu(g)\|\\
                &\leq \int_{G} \|v- \pi(g)v\| \ d\mu(g) < \epsilon.
            \end{align*}
        Therefore, $\|v_0\| > 1-\epsilon >0$ i.e., $v_0$ is nonzero.
            Now for any $h\in G$,
            \begin{align*}
                \pi(h) v_{0} &= \pi(h) \int_{G} \pi(g) v\ d\mu(g) \\
                &=  \int_{G} \pi(hg) v\ d\mu(g) =  \int_{G} \pi(g) v\ d\mu(h^{-1} g)\\
                &= \pi(h) \int_{G} \pi(g) v\ d\mu(g) = v_{0}.
            \end{align*}
            Thus, $G$ is a Kazhdan group or has Kazhdan's property (T). Combining this fact with the above remark, we see that $(SO(n), SO(n-1))$  has relative property (T).

            We shall need the following characterization of relative property (T)
            \cite[Theorem 12.1.7 $(1)\Leftrightarrow (3)$]{brown-ozawa}
            \begin{theorem}[Brown--Ozawa]\label{unifT}
                Let $\Gamma \subseteq G$ be a subgroup.
                The following are equivalent:
                \begin{enumerate}
                    \item The inclusion $\Gamma \subseteq G$ has relative property (T),
                    \item Any sequence of positive definite functions on $G$ converging to the 
                    constant function $1$ is uniform on $\Gamma.$
                \end{enumerate}
            \end{theorem}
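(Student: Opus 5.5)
The statement has two implications, and both should go through the GNS correspondence between positive definite functions and pairs $(\pi,\xi)$ with $\varphi(g)=\langle \pi(g)\xi,\xi\rangle$. Pointwise convergence to $1$ will play the role of almost invariance, and uniform convergence on $\Gamma$ the role of near $\Gamma$-invariance. Since we cannot assume $G$ is countable, I read ``sequence'' as ``net'' throughout.

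\emph{$(2)\Rightarrow(1)$.} Let $(\pi,\mathcal{H})$ have an almost $G$-invariant net of unit vectors $h_n$. Set $\varphi_n(g)=\langle \pi(g)h_n,h_n\rangle$. These are positive definite and converge pointwise to $1$, since $|1-\varphi_n(g)|\le \|\pi(g)h_n-h_n\|$. By $(2)$ the convergence is uniform on $\Gamma$, so for some $n$ we have $\sup_{\gamma\in\Gamma}|1-\varphi_n(\gamma)|<\varepsilon$. Because $\|\pi(\gamma)h_n-h_n\|^2=2-2\operatorname{Re}\varphi_n(\gamma)$, the whole orbit $\pi(\Gamma)h_n$ lies in the ball of radius $\sqrt{2\varepsilon}$ about $h_n$. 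Let $v_0$ be the unique element of minimal norm in the closed convex hull of $\pi(\Gamma)h_n$. This hull is $\pi(\Gamma)$-stable and $\pi$ is unitary, so uniqueness makes $v_0$ $\Gamma$-invariant. Moreover $\|v_0-h_n\|\le\sqrt{2\varepsilon}<1$, so $v_0\neq 0$. This is the same averaging device as in the compact-group argument above, with the Bochner integral replaced by the minimal-norm point of the orbit hull.

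\emph{$(1)\Rightarrow(2)$.} I would first extract a quantitative form of relative property (T):
\begin{quote}
for every $\delta>0$ there are a finite $E\subseteq G$ and $\varepsilon>0$ such that, in \emph{every} unitary representation, each $(E,\varepsilon)$-invariant unit vector $\xi$ lies within $\delta$ of the $\Gamma$-invariant vectors.
\end{quote}
Granting this, the implication is quick. Let $\varphi_n\to1$ pointwise, and write $\varphi_n(g)=\langle\pi_n(g)\xi_n,\xi_n\rangle$ with $\|\xi_n\|^2=\varphi_n(e)\to1$. Then $\|\pi_n(g)\xi_n-\xi_n\|^2=\varphi_n(e)+1\cdot\varphi_n(e)-2\operatorname{Re}\varphi_n(g)$, which tends to $0$ pointwise, so the normalized $\xi_n$ are eventually $(E,\varepsilon)$-invariant. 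Hence $\xi_n=\eta_n+r_n$ with $\eta_n$ $\Gamma$-invariant and $\|r_n\|<\delta$. For $\gamma\in\Gamma$ this gives $\|\pi_n(\gamma)\xi_n-\xi_n\|=\|\pi_n(\gamma)r_n-r_n\|\le 2\delta$, and therefore $\sup_\Gamma|\varphi_n(e)-\varphi_n|\le 2\delta\|\xi_n\|$ uniformly. Combined with $\varphi_n(e)\to1$, this yields uniform convergence on $\Gamma$.

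The main obstacle is the quantitative lemma. I would prove it by contradiction. Suppose it fails for some $\delta$. Then for each pair $(E,\varepsilon)$ in the directed set of such pairs there are a representation $\pi_{E,\varepsilon}$ and an $(E,\varepsilon)$-invariant unit vector $\xi_{E,\varepsilon}$ at distance at least $\delta$ from $\mathcal{H}_{E,\varepsilon}^\Gamma$.

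The naive plan is to form the direct sum and project away the $\Gamma$-invariant part. The difficulty is that the projection $P$ onto $\Gamma$-invariants commutes only with $\pi(\Gamma)$, not with $\pi(G)$. So $(1-P)\xi$ need not stay almost $G$-invariant, and one cannot simply restrict to $(1-P)\mathcal{H}$.

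What I would try first is to replace $P$ by the minimal-norm-point map $\xi\mapsto v_0(\xi)$ used in the first half, and to run the contradiction inside $\bigoplus_{(E,\varepsilon)}\pi_{E,\varepsilon}$. The direct sum has the almost invariant net $\xi_{E,\varepsilon}$, so relative (T) produces a $\Gamma$-invariant vector. The key estimate to establish is that this vector can be chosen to have a non-negligible component along some $\xi_{E,\varepsilon}$, which would then contradict the choice of $\xi_{E,\varepsilon}$.

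If that estimate resists, the fallback is the standard Jolissaint/Brown--Ozawa route, which is where I expect the real work of the proof to lie. One passes to $\pi\otimes\overline{\pi}$, or to the $G$-representation on Hilbert--Schmidt operators. There almost invariance of $\xi$ becomes almost invariance of the rank-one operator $\xi\otimes\overline{\xi}$, and a $\Gamma$-invariant vector is a positive operator commuting with $\pi(\Gamma)$. Its spectral projections then yield $\Gamma$-invariant vectors quantitatively close to $\xi$.
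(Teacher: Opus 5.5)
The paper gives no proof of this statement; it quotes Brown--Ozawa, Theorem 12.1.7, so your argument has to stand on its own. Your $(2)\Rightarrow(1)$ is correct. For countable $G$ you can extract a sequence from the almost invariant net, so it also works with the literal word ``sequence''. Your derivation of $(2)$ from the quantitative lemma is also correct. The gap is that the quantitative lemma, which is the whole content of $(1)\Rightarrow(2)$, is never proved.

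Your first route cannot work as stated. Applied to $\bigoplus\pi_{E,\varepsilon}$, the weak form of relative property (T) only yields \emph{some} nonzero $\Gamma$-invariant vector. Each summand may contain $\Gamma$-invariant vectors, since you only assumed $\xi_{E,\varepsilon}$ is $\delta$-far from them, so this is no contradiction. Demanding that the invariant vector correlate with the $\xi_{E,\varepsilon}$ is the lemma itself.

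Nor can you restrict to a $G$-subrepresentation without $\Gamma$-invariant vectors. The natural one is $\mathcal{K}^\perp$ with $\mathcal{K}=\overline{\operatorname{span}}\,\pi(G)\mathcal{H}^\Gamma$. But a Kazhdan pair, obtained from (1) by the usual direct-sum argument, shows $\xi$ lies close to $\mathcal{K}$, so projecting onto $\mathcal{K}^\perp$ kills it.

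The fallback is circular in the same way. Producing a $\Gamma$-invariant Hilbert--Schmidt operator near $\xi\otimes\overline{\xi}$ is exactly the quantitative statement for $\pi\otimes\overline{\pi}$. Even granted such an operator, spectral projections of a positive operator commuting with $\pi(\Gamma)$ give finite-dimensional $\pi(\Gamma)$-invariant subspaces, not fixed vectors. In the rank-one case you get $v\approx\xi$ with $\pi(\gamma)v=\chi(\gamma)v$ for a character $\chi$ of $\Gamma$, and nothing forces $\chi\approx 1$.

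For countable $G$ (Brown--Ozawa's setting, matching ``sequence''), you can bypass the lemma with conditionally negative definite functions.

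(i) Relative (T) forces every conditionally negative definite $\psi$ on $G$ to be bounded on $\Gamma$. Suppose $\psi(\lambda_n)\to\infty$ with $\lambda_n\in\Gamma$. Let $(\pi_t,\xi_t)$ be the GNS pair of $e^{-t\psi}$, which is positive definite by Schoenberg's theorem on conditionally negative definite functions. Since $\sqrt{\psi}$ is symmetric and subadditive (write $\psi=\|b\|^2$ for a $1$-cocycle $b$), you get $\psi(h^{-1}\lambda_n^{-1}g)\to\infty$ for all $g,h$. Hence $\pi_t(\lambda_n^{-1}g)\xi_t\to 0$ weakly. A $\Gamma$-invariant $F$ then satisfies $\langle F,\pi_t(g)\xi_t\rangle=\langle F,\pi_t(\lambda_n^{-1}g)\xi_t\rangle\to 0$ for all $g$, so $F=0$. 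Thus $\bigoplus_k\pi_{1/k}$ has almost invariant unit vectors $\xi_{1/k}$, because $\|\pi_t(g)\xi_t-\xi_t\|^2=2-2e^{-t\psi(g)}$, but no nonzero $\Gamma$-invariant vector. This contradicts (1).

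(ii) Normalize using $\varphi_n(e)\to1$. Suppose $\varphi_n\to 1$ pointwise while $\sup_\Gamma(1-\operatorname{Re}\varphi_n)\geq c>0$ along a subsequence. Enumerate $G=\{g_1,g_2,\ldots\}$ and pick $n_k$ in that subsequence with $1-\operatorname{Re}\varphi_{n_k}(g_j)\leq 4^{-k}$ for $j\leq k$. Then $\psi=\sum_k 2^k(1-\operatorname{Re}\varphi_{n_k})$ is finite everywhere and conditionally negative definite. It satisfies $\sup_\Gamma\psi\geq 2^k c$ for every $k$, contradicting (i). Since $|1-\varphi|^2\leq 2(1-\operatorname{Re}\varphi)$ for normalized $\varphi$, this yields (2).
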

        \subsection{Existence of noncontinuous representations}
            We caution that, in general, a connected group $G$  will have many discontinuous Class I representions.
            For example suppose there is a nonconstant real-valued positive definite function such that $f(1)=1$, then taking 
            the limit of powers of $f$ gives a function which is $1, -1, 0$ valued.
    \subsection{$SO(\infty)$}
        Let $\hilbert$ be a infinite dimensional separable Hilbert space.
        Fix an orthonormal basis $e_1, e_2, \ldots.$
        There is a natural inclusion of $SO(n)$ into $\BH$
        by letting $SO(n)$ act naturally on the first $n$ basis vectors, and act trivially
        on the remaining. Moreover, these actions are mutually compatiable with the natural inclusion from $SO(n-1)$ to $SO(n)$ given by 
            \[\gamma \mapsto \begin{bmatrix}
                \gamma & 0 \\ 0 & 1
            \end{bmatrix}.\]
        Taking $\bigcup SO(n)$ viewed naturally as a subset of $\BH$ gives a group
        which we denote by $SO(\infty).$
        Naturally the group $SO(\infty)$ has the structure of a direct limit, and, as is typical
        of direct limits, it is somewhat topologically pathological. Thus, to give $SO(\infty)$
        the structure of a locally compact group, (which, for example, ensures the existence of faithful, perhaps infinite dimensional, representations) we endow it with the discrete topology.
        The group $SO(\infty)$ has a natural subgroup
        $SO(\infty)_{+}$ given by elements 
        \[\gamma \mapsto \begin{bmatrix}
                1 & 0 \\ 0 & \gamma
        \end{bmatrix}.\]
        The space of cosets $SO(\infty) / SO(\infty)_+ = S^{\infty}$ is called the {\bf ind-finite sphere}.
        Each double coset $SO(\infty)_+ \gamma SO(\infty)_+$
        is determined by the $11$ entry of $\gamma,$ which we denote by $\gamma_{11}.$

        \section{Discrete Schoenberg's theorem} \label{discoscho}

        \subsection{The Hamming Cube}

Let $X = \{-1,1\}^n$ be the Hamming cube. For $x, y \in X$, define the normalized inner product:
\[
\langle x, y \rangle = \frac{1}{n} \sum_{k=1}^n x_k y_k.
\]
The values of this inner product are discrete:
\[
\langle x, y \rangle \in \left\{1, 1 - \frac{2}{n}, 1 - \frac{4}{n}, \dots, -1 \right\}.
\]
The Hamming distance $d(x,y) = \#\{k : x_k \ne y_k\}$ relates to the inner product by:
\[
\langle x, y \rangle = 1 - \frac{2}{n} d(x,y).
\]

\subsection{Positive Definite Functions on the Hamming Cube}

\begin{definition}
A function $f: [-1,1] \to \mathbb{R}$ is \textbf{positive definite} on the Hamming cube if for every finite set $x_1, \dots, x_m \in X$, the matrix
\[
(f(\langle x_i, x_j \rangle))_{i,j=1}^m
\]
is positive semidefinite.
\end{definition}

\subsection{Zonal Spherical Functions and Krawtchouk Polynomials}

The Hamming cube is a 2-point homogeneous space. The zonal spherical functions are the \textbf{Krawtchouk polynomials} $K_j^{(n)}(d)$, defined for $j = 0, 1, \dots, n$ by:
\[
K_j^{(n)}(d) = \sum_{k=0}^j (-1)^k \binom{d}{k} \binom{n-d}{j-k}.
\]
The discrete analogue of Schoenberg's theorem is:

\begin{theorem}
A function $f: [-1,1] \to \mathbb{R}$ is positive definite on the Hamming cube $\{-1,1\}^n$ if and only if
\[
f\left(1 - \frac{2d}{n}\right) = \sum_{j=0}^n a_j K_j^{(n)}(d), \quad \text{with } a_j \geq 0.
\]
\end{theorem}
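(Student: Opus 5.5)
The plan is to use the Fourier analysis of the group $(\mathbb{Z}/2\mathbb{Z})^n$: the characters $\chi_S(x)=\prod_{k\in S}x_k$, $S\subseteq\{1,\dots,n\}$, form an orthogonal basis of functions on $X$. Any function of $\langle x,y\rangle$ is a function of $d(x,y)$, which we write as $F(d)=f(1-2d/n)$. We may also regard it as the translation-invariant kernel $g(xy)$ with $g(z)=F(\#\{k: z_k=-1\})$, where $xy$ denotes coordinatewise product. Note that $F$ is only evaluated at the $n+1$ points $d=0,\dots,n$, so only these values of $f$ matter.

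\emph{Step 1 (Krawtchouk polynomials as symmetrized characters).} For $|S|=j$ we have $\chi_S(x)\chi_S(y)=\chi_S(xy)$. Summing over all $S$ with $|S|=j$, a direct count gives
\[
\sum_{|S|=j}\chi_S(x)\chi_S(y)=\sum_{k}(-1)^k\binom{d}{k}\binom{n-d}{j-k}=K_j^{(n)}(d(x,y)),
\]
where $k$ counts the elements of $S$ lying in the positions where $x$ and $y$ disagree.

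\emph{Step 2 (sufficiency).} The identity in Step 1 exhibits $[K_j^{(n)}(d(x_i,x_l))]_{i,l}$ as a Gram matrix, namely that of the vectors $(\chi_S(x_i))_{|S|=j}$. It is therefore positive semidefinite. A nonnegative combination of positive semidefinite matrices is positive semidefinite, so any $f$ of the stated form is positive definite.

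\emph{Step 3 (necessity).} Suppose $f$ is positive definite on the cube. Expand $g=\sum_S \hat g(S)\chi_S$. Since $g$ is invariant under coordinate permutations, $\hat g(S)$ depends only on $|S|$; write $a_j$ for this common value. Then
\[
F(d(x,y))=g(xy)=\sum_S\hat g(S)\chi_S(x)\chi_S(y)=\sum_j a_jK_j^{(n)}(d(x,y)).
\]
It remains to show $a_j\ge 0$. Apply positive semidefiniteness of the full $2^n\times 2^n$ matrix $[g(xy)]_{x,y}$ to the test vector $c=\chi_S$. This gives
\[
0\le\sum_{x,y}\chi_S(x)\chi_S(y)g(xy)=\sum_T\hat g(T)\Big(\sum_x\chi_S(x)\chi_T(x)\Big)^2=4^n\hat g(S).
\]
Hence $a_{|S|}\ge 0$.

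\emph{Main obstacle.} The only point requiring care is bookkeeping. Fourier inversion on $(\mathbb{Z}/2\mathbb{Z})^n$ must be normalized correctly, and the statement has to be read as identifying $f$ only on the grid points $1-2d/n$. Uniqueness of the $a_j$ then follows because the $K_j^{(n)}$ for $j=0,\dots,n$ are linearly independent functions on $\{0,\dots,n\}$; this in turn follows from the orthogonality of the characters.
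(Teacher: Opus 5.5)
Your proof is correct and complete. The character identity in Step 1 holds, the Gram-matrix argument gives sufficiency, and the computation $\sum_{x,y}\chi_S(x)\chi_S(y)g(xy)=4^n\hat g(S)$ gives necessity. You also read the statement correctly, as a condition only on the $n+1$ grid values $f(1-2d/n)$.

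The paper does not prove this theorem. It quotes it as the classical discrete Schoenberg (Delsarte) theorem. The only motivation offered is the remark that the cube is a $2$-point homogeneous space for $(\mathbb{Z}/2\mathbb{Z})^n\rtimes S_n$, with stabilizer $S_n$ and Krawtchouk zonal spherical functions. So the implied route is the Gelfand-pair machinery: in the paper's own terms, Theorem \ref{cpdthm} plus Peter--Weyl, decomposing into class one irreducibles and identifying their spherical functions.

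Your argument is the elementary version of that route. Because $(\mathbb{Z}/2\mathbb{Z})^n$ is an abelian normal subgroup acting simply transitively on the cube, invariant kernels become convolution kernels diagonalized by characters. The $S_n$-symmetry then only serves to group the characters by $|S|$. Step 1 is exactly the computation showing that $K_j^{(n)}(d)/\binom{n}{j}$ is the spherical function of the class one irreducible $V_j=\mathrm{span}\{\chi_S:|S|=j\}$. Your test vector $\chi_S$ in Step 3 picks out the corresponding isotypic coefficient.

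What your route buys is self-containment: you never need to check that $V_j$ is irreducible or that its space of $S_n$-fixed vectors is one-dimensional. You also get an explicit inversion formula, $a_j=2^{-n}\sum_z F(\#\{k:z_k=-1\})\chi_S(z)$ for any $|S|=j$. What the abstract route buys is uniformity with the non-abelian homogeneous spaces the paper actually targets, such as $SO(n)/SO(n-1)$ ($n\ge 3$) with Gegenbauer polynomials. There the symmetry group has no abelian normal subgroup acting simply transitively, so your character-diagonalization trick is not available.
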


\subsection{The Limit $n \to \infty$}

The infinite analog of the Hamming cube requires slight introduction.

One would hope that the 
natural space is either the set of all finite bitstrings or perhaps the list of all infinite bitstrings.
However, if any sequence in such a space with infinitely many zeros or ones is given then, one can permute finitely many entries to get a sequence with zeros in any finite set of entries you want. Thus, there is a sequence going
to zero such that any positive definite function at the original sequence takes the same value, and thus the value at the zero
sequence.

Take $(X,\mathcal{A},\mu)$ a probability space and $\Sigma$ be a group of measurable bijections $f:X\rightarrow X$
such that
\begin{itemize}
    \item $\mu$ is non-atomic.
    \item If $A_1, \ldots, A_n$  and $B_1, \ldots, B_n$ are measurable partitions of $X,$ such that
    $\mu(A_i)=\mu(B_i),$ then there is $\sigma \in \Sigma$ such that $\sigma(A_i)=B_i,$
\end{itemize}
is called an {\bf infinite Hamming cube.}
Here the analogue of bitstrings are the measurable sets $\mathcal{A},$ which forms a group under symmetric complement.
Thus, our underlying double coset space is the quotient of $\mathcal{A}$ by the action of $\Sigma$ are exactly
the real numbers in $[0,1].$
Thus, the inner product given by
$$\langle E,F \rangle= 2 - 2\mu(E\Delta F),$$
which essentially agrees with the Hamming inner product for sets rational measure with some fixed denominator.
That is, for each partition $A_1, \ldots, A_n$ such that $\mu(A_i) =1/n$ there is a natural map from $\{0,1\}^n.$

We examine the asymptotic behavior of the Krawtchouk polynomials as $n \to \infty$. We utilize the normalized coordinate $u \in [-1,1]$ defined by $d = \frac{n(1-u)}{2}$.

\begin{lemma}\label{Krawtchouk limit}
Fix an integer $j \geq 0$. For any $u \in [-1, 1]$, let $d_n$ be a sequence such that $1 - 2d_n/n \to u$. Then,
\[
\lim_{n \to \infty} \frac{j!}{n^j} K_j^{(n)}(d_n) = u^j.
\]
\end{lemma}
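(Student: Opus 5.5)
Since $j$ is fixed, the plan is to expand the finite sum defining $K_j^{(n)}(d_n)$ term by term and take the limit of each of the $j+1$ summands separately. Write $d = d_n$ and $n-d = n - d_n$. By hypothesis,
\[
\frac{d_n}{n} \to \frac{1-u}{2} =: p, \qquad \frac{n-d_n}{n} \to \frac{1+u}{2} = 1-p .
\]
We may assume $0 \le d_n \le n$ are integers, as in the Hamming setting.

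For fixed $k$, the binomial coefficient $\binom{d}{k} = d(d-1)\cdots(d-k+1)/k!$ is a polynomial of degree $k$ in $d$. Dividing by $n^k$ gives
\[
\frac{1}{n^k}\binom{d_n}{k} = \frac{1}{k!}\prod_{i=0}^{k-1}\Bigl(\frac{d_n}{n} - \frac{i}{n}\Bigr) \longrightarrow \frac{p^k}{k!}.
\]
This holds whether or not $d_n \to \infty$: if $p = 0$ the product simply tends to $0 = p^k/k!$ for $k \ge 1$, and it equals $1$ for $k = 0$. In the same way,
\[
\frac{1}{n^{j-k}}\binom{n-d_n}{j-k} \longrightarrow \frac{(1-p)^{j-k}}{(j-k)!}.
\]

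Multiplying these two limits and summing the finitely many terms gives
\[
\frac{j!}{n^j}K_j^{(n)}(d_n) \longrightarrow \sum_{k=0}^{j} (-1)^k \binom{j}{k} p^k (1-p)^{j-k} = \bigl((1-p) - p\bigr)^j.
\]
Since $(1-p) - p = u$, the right-hand side is $u^j$, which is the claim.

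There is no genuine obstacle here, because $j$ is fixed and the sum has only $j+1$ terms, so limits may be interchanged with the sum freely. The only point requiring care is the boundary behaviour at $u = \pm 1$, where $d_n$ or $n - d_n$ may stay bounded (even at $0$). This is handled by the polynomial form of $\binom{d}{k}$ above, which does not need $d_n \to \infty$. One should also check the sign convention: with $d = n(1-u)/2$, the identity $1-2p = u$ is what makes the binomial theorem produce exactly $u^j$ rather than $(-u)^j$.
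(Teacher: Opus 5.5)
Your proof is correct and follows the same route as the paper: expand $K_j^{(n)}(d_n)$ into its $j+1$ terms, extract the leading $n^j$ behaviour of each product of binomial coefficients, and collapse the limit with the binomial theorem using $1-2p=u$. Your normalization $\binom{d_n}{k}/n^k=\frac{1}{k!}\prod_{i<k}(d_n/n-i/n)$ is slightly cleaner than the paper's substitution of exactly $d=n(1-u)/2$, because it handles an arbitrary sequence $d_n$ and the boundary cases $u=\pm 1$ directly, which the paper leaves implicit.
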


\begin{proof}
We start by getting the asymptotic bounds on the Krawtchouk polynomials using the following explicit formula:
\[
K_j^{(n)}(d) = \sum_{k=0}^j (-1)^k \binom{d}{k} \binom{n-d}{j-k}.
\]
For any variable $X$ and fixed integer $m$, the Stirling approximation gives, $\binom{X}{m} = \frac{X^m}{m!} + O(X^{m-1})$.
Substituting $d = \frac{n(1-u)}{2}$ and $n-d = \frac{n(1+u)}{2}$,
\begin{align*}
\binom{d}{k} &= \frac{1}{k!} \left( \frac{n(1-u)}{2} \right)^k + O(n^{k-1}), \\
\binom{n-d}{j-k} &= \frac{1}{(j-k)!} \left( \frac{n(1+u)}{2} \right)^{j-k} + O(n^{j-k-1}).
\end{align*}
Multiplying these terms, the leading order term is of order $n^k \cdot n^{j-k} = n^j$. The cross terms involving error terms are of order $O(n^{j-1})$. Thus
\[
\binom{d}{k} \binom{n-d}{j-k} = \frac{n^j}{k!(j-k)!} \left( \frac{1-u}{2} \right)^k \left( \frac{1+u}{2} \right)^{j-k} + O(n^{j-1}).
\]
Substituting back into the sum gives us
\[
K_j^{(n)}(d) = \sum_{k=0}^j (-1)^k \left[ \frac{n^j}{k!(j-k)!} \left( \frac{1-u}{2} \right)^k \left( \frac{1+u}{2} \right)^{j-k} + O(n^{j-1}) \right].
\]
Isolating the $n^j$ term and rearranging the constants:
\[
K_j^{(n)}(d) = n^j \left( \sum_{k=0}^j \frac{(-1)^k}{k!(j-k)!} \left( \frac{1-u}{2} \right)^k \left( \frac{1+u}{2} \right)^{j-k} \right) + O(n^{j-1}).
\]
Multiplying by $j!/n^j$, the error term becomes $O(n^{-1})$, which vanishes as $n \to \infty$. Therefore
\[
\lim_{n \to \infty} \frac{j!}{n^j} K_j^{(n)}(d) = \sum_{k=0}^j \frac{j! (-1)^k}{k!(j-k)!} \left( \frac{1-u}{2} \right)^k \left( \frac{1+u}{2} \right)^{j-k}.
\]
Now we apply the Binomial Theorem to $(A+B)^j$ where $A = -\frac{1-u}{2}$ and $B = \frac{1+u}{2}$:
\[
\text{Sum} = \left( - \frac{1-u}{2} + \frac{1+u}{2} \right)^j = \left( \frac{u-1+1+u}{2} \right)^j = u^j.
\]
\end{proof}

Taking a limit of partitions of size $n!$ (to obtain common divisors) we see that on measurable sets with rational measure this determines the values everywhere.

\begin{corollary}
As $n \to \infty$, a function $f$ is positive definite on an infinite Hamming cube if and only if it admits a power series expansion with nonnegative coefficients:
\[
f(u) = \sum_{j=0}^\infty a_j u^j, \quad a_j \geq 0.
\]
If $f(1) < \infty$, this series converges uniformly on $[-1,1]$.
\end{corollary}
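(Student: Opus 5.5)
We prove the two directions separately, using the finite Hamming cubes sitting inside the infinite Hamming cube as the bridge.

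\emph{Sufficiency.} Suppose $f(u)=\sum a_j u^j$ with $a_j\ge 0$. We need to show that $f$ is positive definite on an infinite Hamming cube. We normalize the inner product on $\mathcal{A}$ so that it is a genuine Gram inner product, for instance $\langle E,F\rangle = 1-2\mu(E\Delta F) = \int_X \chi'_E\chi'_F\,d\mu$, where $\chi'_E = 1-2\mathbf{1}_E$. Then $[\langle E_i,E_j\rangle]$ is a Gram matrix in $L^2(\mu)$ and its entries lie in $[-1,1]$. By the Schur product theorem each power $u^j$ preserves positive semidefiniteness, and a nonnegative combination of such kernels is again positive semidefinite. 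Pointwise convergence of the series at every entry then passes positivity to the limit. If $f(1)=\sum a_j<\infty$, the Weierstrass M-test gives uniform convergence on $[-1,1]$.

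\emph{Necessity.} Suppose $f$ is positive definite on the infinite Hamming cube.
\begin{enumerate}
\item Fix $n$ and a partition $A_1,\dots,A_n$ of $X$ with $\mu(A_i)=1/n$. Send a bitstring $x\in\{-1,1\}^n$ to $E_x=\bigcup_{x_i=-1}A_i$. This gives $\langle E_x,E_y\rangle = \frac1n\sum x_ky_k$, so $f$ restricts to a positive definite function on $\{-1,1\}^n$.
\item By the discrete Schoenberg theorem stated above, $f(1-2d/n)=\sum_{j=0}^n a^{(n)}_j K^{(n)}_j(d)$ with $a^{(n)}_j\ge 0$.
\item Rewrite this as $f(1-2d/n)=\sum_j b^{(n)}_j \kappa^{(n)}_j(d)$, where $\kappa^{(n)}_j=\frac{j!}{n^j}K^{(n)}_j$ and $b^{(n)}_j = a^{(n)}_j n^j/j!\ge0$.
\item Evaluating at $d=0$ gives $\kappa_j^{(n)}(0)=\frac{j!}{n^j}\binom nj\le 1$, and $\kappa_j^{(n)}(0)\to1$ for each fixed $j$. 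Hence $\sum_j b^{(n)}_j\kappa_j^{(n)}(0)=f(1)$ bounds the coefficients, and for each fixed $j$ we get $b^{(n)}_j\le f(1)/\kappa^{(n)}_j(0)$, uniformly bounded in $n$.
\item By a diagonal argument, pass to a subsequence of $n$ (taken along $n=m!$, so that every rational value $u$ is realized by all large $n$) along which $b^{(n)}_j\to a_j\ge0$ for every $j$.
\item Apply Lemma~\ref{Krawtchouk limit} termwise to get $f(u)=\sum a_j u^j$ at every rational $u$.
\end{enumerate}

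The main obstacle is the interchange of limit and sum in the last step, namely controlling the tail $\sum_{j>J} b^{(n)}_j\kappa^{(n)}_j(d)$ uniformly in $n$. We plan to handle it with a bound of the form $|K^{(n)}_j(d)|\le K^{(n)}_j(0)\,\rho^j$ for $|u|\le\rho<1$, valid when $j$ is small relative to $n$. Such a bound follows from the generating function $\sum_j K_j^{(n)}(d)z^j=(1-z)^d(1+z)^{n-d}$ via Cauchy estimates. For $j$ comparable to $n$ we instead use $|\kappa^{(n)}_j(d)|\le \kappa^{(n)}_j(0)$, together with the observation that mass escaping to $j\approx n$ is weighted by $\kappa^{(n)}_j(0)\approx j!/n^j$, which is exponentially small. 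The first bound keeps the tail small for $|u|<1$; the second lets us pass to the limit in $f(1)=\sum_j b_j^{(n)}\kappa_j^{(n)}(0)$ to obtain $\sum a_j\le f(1)<\infty$.

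With absolute convergence in hand, the series equals $f$ on the rationals in $(-1,1)$. The paper's remark that rational-measure sets determine the values of $f$ everywhere then extends the identity from rational $u$ to all $u$. The endpoints are treated separately: at $u=1$ we use Fatou's lemma combined with the Schur argument from the sufficiency direction, and $u=-1$ is handled by the symmetry $E\mapsto X\setminus E$.
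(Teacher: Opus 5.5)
\paragraph{Endpoints.} Your sufficiency direction and the M-test are fine. The M-test is essentially all the paper's own proof argues; it takes the expansion itself for granted. The necessity direction, however, has a genuine gap, and as literally stated it cannot be completed. The endpoint step fails: Fatou only gives $\sum_j a_j\le f(1)$, and the reverse inequality is false in general.

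Take $f=\chi_{\{1\}}$. Then $f(\langle E,F\rangle)=1$ exactly when $\mu(E\Delta F)=0$, which is an equivalence relation. So every matrix $[f(\langle E_i,E_j\rangle)]_{i,j}$ is, up to a permutation, block diagonal with all-ones blocks, hence positive semidefinite. Yet $f$ is not a power series.

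Your own scheme shows the failure directly. Since $\sum_j K^{(n)}_j(d)=(1-1)^d(1+1)^{n-d}$, you get $\chi_{\{d=0\}}=2^{-n}\sum_j K^{(n)}_j(d)$. So $b^{(n)}_j=2^{-n}n^j/j!\to 0$ and every $a_j=0$, while $f(1)=1$; no Schur-type argument can close this. Likewise $\chi_{\{1\}}+\chi_{\{-1\}}$, which comes from the equivalence relation ``$E=F$ or $E=X\setminus F$ a.e.'', defeats your step at $u=-1$. You must either assume continuity of $f$ at $\pm1$, or add terms $a\chi(u=\pm1)+bu\chi(u=\pm1)$ with $a,b\ge 0$ to the conclusion, as in Corollary~\ref{indfinitepd}.

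\paragraph{Tail control.} This is also not sound as described. The product $b^{(n)}_j\kappa^{(n)}_j(0)=a^{(n)}_j\binom nj$ is the part of $f(1)$ sitting at level $j$. The exponential smallness of $\kappa^{(n)}_j(0)$ is exactly offset by $b^{(n)}_j$, which is only bounded by $f(1)/\kappa^{(n)}_j(0)$. Hence $|\kappa^{(n)}_j(d)|\le\kappa^{(n)}_j(0)$ bounds the contribution of level $j$ only by its mass, which need not be small. For $\chi_{\{1\}}$ the whole mass $2^{-n}\binom nj$ sits near $j=n/2$, and $f$ vanishes in the interior only through cancellation.

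In the bulk $\epsilon n<j<(1-\epsilon)n$ you need genuine exponential decay of $K^{(n)}_j(d)/\binom nj$ for $|u|\le\rho$. A saddle-point Cauchy estimate at radius $j/(n-j)$ gives this for $\epsilon n<j\le n/2$, and the symmetry below transfers it to $j\ge n/2$.

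Near the top there is no decay at all: $K^{(n)}_{n-k}(d)=(-1)^dK^{(n)}_k(d)$. So level $n-k$ contributes about $(-1)^d a^{(n)}_{n-k}\binom nk u^k$. Nothing in your steps 1--6 prevents a fixed portion of $f(1)$ from living there, in which case your limits $a_j$ would miss part of $f$.

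Excluding this needs an input your steps never use: the compatibility of the finite cubes inside the infinite one. Embed the $n$-cube into the $2n$-cube by halving each cell, so cell $i$ becomes coordinates $i$ and $i+n$. The character $\chi_S$, $S\subseteq\{1,\dots,2n\}$, restricts to a character of level $|S_1\Delta S_2|$, where $S_1=S\cap\{1,\dots,n\}$ and $S_2=\{i\le n: i+n\in S\}$. Only an exponentially small fraction, uniformly in $J$, of the $J$-subsets of $\{1,\dots,2n\}$ have $|S_1\Delta S_2|\ge n-k$. This forces $a^{(n)}_{n-k}\binom nk\to 0$.

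\paragraph{Irrational $u$.} Equipartitions only ever produce rational $u$. The remark you invoke to reach irrational $u$ is asserted in the paper without proof, so that step also still needs an argument.
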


\begin{proof}
Let $f$ be the limit of a sequence of positive definite functions. Then $f(u) = \sum a_j u^j$ with $a_j \ge 0$.
The value at $u=1$ is $f(1) = \sum a_j$. Since $f(1) < \infty$, the series of coefficients converges absolutely.
By Abel's Theorem (extended to closed intervals for non-negative coefficients), the power series $\sum a_j u^j$ converges uniformly on $[-1,1]$ because $\sum a_j \sup_{u \in [-1,1]} |u^j| = \sum a_j < \infty$.
\end{proof}

Unless the cardinality of the continuum is a real-valued measurable cardinal, $\mathcal{A}$ is unlikely to be all subsets of your space \cite{ulam1930masstheorie}. We will see later via Arveson's extension theorem that there are extensions of our natural inner product to the whole space, although there is no general agreement with the measure structure.

\section{Completely positive maps} \label{completepositivity}
    Recall a Banach algebra is a a normed algebra which is complete.
    A $C^*$-algebra is a Banach algebra that has a conjugate linear involution $X\mapsto X^*$
    such that $\|X\|^2 = \|X^*X\|.$
    Let $\mathcal{H}$ be a Hilbert space.
    We say $\pi: \mathcal{A}\rightarrow B(\mathcal{H})$ is a representation 
    if it is an algebra homomorphism such that $\pi(a)^*=\pi(a^*).$
    The Gelfand-Naimark-Segal construction establishes that 
    any $C^*$-algebra has an isometric injective representation, and thus we may view $C^*$-algebras
    naturally as closed subalgebras of $B(\mathcal{H}).$
    
    An element $H$ is called positive semidefinite if there is $X$ such that $H=X^*X.$

    A linear map $\varphi: \mathcal{A}\rightarrow \mathcal{B}$
    is said to be completely positive if
    $[\varphi(X_{ij})]_{i,j}$ is positive semidefinite whenever $[X_{ij}]_{ij}$ is positive semidefinite.

    The set of states on a $C^*$-algebra are the set of completely positive maps $\varphi: \mathcal{A}\rightarrow \mathbb{C}$
    such that $\varphi(1)=1.$ These form a compact convex set, whose extreme points are called pure states.
    
    We now recall Dixmier's version of Glimm's lemma.
    \begin{theorem}[\cite{dixmier}] \label{glimm}
        Let $\mathcal{A}$ be a $C^*$-algebra.
        Let $\Pi$ be a family of representations of
        $\mathcal{A}$ so that the intersection of their kernels is $0.$
        Any pure state on $\mathcal{A}$ is a weak$^*$ limit of states of the form
        \[
            \varphi(g)= \langle \pi(g) \xi,\xi\rangle
        \]
        where $\pi\in \Pi.$
    \end{theorem}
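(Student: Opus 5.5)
The plan is to reduce the statement to the Krein--Milman theorem and Milman's converse, after first showing that the vector states coming from $\Pi$ are ``enough'' to recover the whole state space. Throughout I work, as the paper does, with a unital $C^*$-algebra $\mathcal{A}$, so states are completely positive maps with $\varphi(1)=1$. Let $V$ be the set of states
\[
\varphi_{\pi,\xi}(g)=\langle \pi(g)\xi,\xi\rangle, \qquad \pi\in\Pi,\ \|\xi\|=1 .
\]
Let $K$ be the weak$^*$ closure of $V$ inside the state space $S(\mathcal{A})$. Since $S(\mathcal{A})$ is weak$^*$ compact, $K$ is a compact set of states.

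\emph{Step 1: the closed convex hull of $K$ is all of $S(\mathcal{A})$.}
\begin{itemize}
\item Suppose some state $\psi$ lies outside $\overline{\mathrm{conv}}(K)$. By Hahn--Banach separation in the weak$^*$ topology there is $a\in\mathcal{A}$ with $\mathrm{Re}\,\psi(a)>\sup_{\varphi\in K}\mathrm{Re}\,\varphi(a)$.
\item Replace $a$ by its real part $h=(a+a^*)/2$. Since states are self-adjoint maps, we obtain a self-adjoint $h$ with $\psi(h)>\sup_{\varphi\in V}\varphi(h)$.
\item Form the direct sum $\rho=\bigoplus_{\pi\in\Pi}\pi$. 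Because the kernels of the $\pi\in\Pi$ intersect in $0$, $\rho$ is an injective $*$-homomorphism, hence isometric. Its image is a $C^*$-subalgebra, so spectra are preserved: $\sigma(\rho(h))=\sigma(h)$.
\item For a self-adjoint operator, the supremum of $\langle T\eta,\eta\rangle$ over unit vectors equals $\max\sigma(T)$. The numerical range of $\rho(h)$ is approximated by convex combinations of the numerical ranges of the summands $\pi(h)$. Hence $\sup_{\varphi\in V}\varphi(h)=\max\sigma(h)$.
\item On the other hand, every state satisfies $\psi(h)\le \max\sigma(h)$, since $\max\sigma(h)\cdot 1-h\ge 0$. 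This contradicts the strict inequality from the separation, so no such $\psi$ exists.
\end{itemize}

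\emph{Step 2: pure states lie in $K$.} A pure state $\omega$ is an extreme point of $S(\mathcal{A})=\overline{\mathrm{conv}}(K)$, with $K$ compact. Milman's converse to the Krein--Milman theorem says every extreme point of the closed convex hull of a compact set in a locally convex space lies in that set. Hence $\omega\in K$, i.e.\ $\omega$ is a weak$^*$ limit (of a net) of states $\varphi_{\pi,\xi}$ with $\pi\in\Pi$, which is the claim.

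I expect the main obstacle to be the spectral computation in Step 1. The key points there are:
\begin{itemize}
\item faithfulness of $\rho$ gives isometry and hence preservation of $\sigma(h)$;
\item one must check that the supremum over the individual summands $\pi$ (rather than over arbitrary vectors of $\rho$) already attains $\max\sigma(h)$.
\end{itemize}
The second point is handled by noting that $\langle\rho(h)\eta,\eta\rangle$ for a unit vector $\eta=(\eta_\pi)$ with finitely many nonzero components is a convex combination of the values $\varphi_{\pi,\eta_\pi/\|\eta_\pi\|}(h)$, taken over the $\pi$ with $\eta_\pi\neq 0$. Vectors of this kind are dense in the unit sphere, so the supremum over them is the full supremum.
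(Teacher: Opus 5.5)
Your argument is correct. The paper gives no proof of this statement and only cites Dixmier. Your route is essentially the standard proof of the result: Hahn--Banach separation in the weak$^*$ topology shows that the weak$^*$-closed convex hull of the vector states from $\Pi$ is the whole state space, and Milman's converse to the Krein--Milman theorem then places the pure states in the closure of those vector states.

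Two small points should be made explicit. First, both the claim that each $\varphi_{\pi,\xi}$ with $\|\xi\|=1$ is a state and the identity $\sigma(\rho(h))=\sigma(h)$ need $\rho(1)=I$. Here the spectrum of $\rho(h)$ must be taken in $B(\mathcal{K})$, since that is what your numerical-range step uses. So if $\Pi$ may contain degenerate representations, first replace each $\pi$ by its restriction to the essential subspace $\overline{\pi(\mathcal{A})\mathcal{H}_\pi}$ (that is, $\pi(1)\mathcal{H}_\pi$ in the unital case); this does not change its kernel. Second, for non-unital $\mathcal{A}$ the same argument runs on the weak$^*$-compact set of positive functionals of norm at most $1$, with $K\cup\{0\}$ in place of $K$. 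The only additional extreme point there is $0$, which is not a pure state.
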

    \subsection{The Stinespring representation theorem}
        The following give the representer theroem for completely positive maps.
        \begin{theorem}[Stinespring representation theorem]
            Let $\mathcal{M}$ be a C$^*$-algebra.
            A map $\varphi: \mathcal{M} \rightarrow \BH$ is completely positive if and only if
            $\varphi(x) = V^*\pi(x)V$ where $\pi: \mathcal{M} \rightarrow \BK$ is a representation.
        \end{theorem}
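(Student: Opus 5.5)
Both directions rest on the GNS construction; the implication ``$\Leftarrow$'' is immediate. Suppose $\varphi(x)=V^*\pi(x)V$ with $V:\mathcal{H}\to\mathcal{K}$ bounded and $\pi$ a representation. Take a positive semidefinite $[X_{ij}]$ in $M_n(\mathcal{M})$, written $[X_{ij}]=Y^*Y$ with $Y\in M_n(\mathcal{M})$. Then
\[
[\varphi(X_{ij})]=(I_n\otimes V)^*\,\pi^{(n)}(Y)^*\pi^{(n)}(Y)\,(I_n\otimes V),
\]
because the amplification $\pi^{(n)}$ is again a $*$-homomorphism. The right-hand side has the form $Z^*Z$, so it is positive semidefinite.

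For ``$\Rightarrow$'' I will run a vector-valued GNS construction.

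\begin{enumerate}
\item On the algebraic tensor product $\mathcal{M}\odot\mathcal{H}$, define the sesquilinear form
\[
\Big\langle \sum_j a_j\otimes h_j,\ \sum_i b_i\otimes k_i\Big\rangle=\sum_{i,j}\langle \varphi(b_i^*a_j)h_j,k_i\rangle .
\]

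\item Check positivity. For $u=\sum_{j=1}^n a_j\otimes h_j$, the matrix $[a_i^*a_j]_{i,j}=A^*A$, where $A$ is the $n\times n$ matrix whose first row is $(a_1,\dots,a_n)$ and whose other rows vanish, so $[a_i^*a_j]$ is positive in $M_n(\mathcal{M})$. Complete positivity makes $[\varphi(a_i^*a_j)]$ positive in $M_n(\BH)$, hence $\langle u,u\rangle\ge 0$. This is the one place complete positivity, not mere positivity, is used.

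\item Form $\mathcal{K}$. By Cauchy--Schwarz, the null space $\mathcal{N}=\{u:\langle u,u\rangle=0\}$ is a subspace. Let $\mathcal{K}$ be the completion of $(\mathcal{M}\odot\mathcal{H})/\mathcal{N}$.

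\item Define the representation by $\pi(x)(a\otimes h)=xa\otimes h$. The crucial estimate is
\[
\langle \pi(x)u,\pi(x)u\rangle\le \|x\|^2\langle u,u\rangle .
\]
Since $\|x\|^2 1-x^*x=c^*c$ is positive, the matrix $[a_i^*(\|x\|^21-x^*x)a_j]=[(ca_i)^*(ca_j)]$ is positive in $M_n(\mathcal{M})$, and complete positivity again gives the inequality. Consequently $\pi(x)$ preserves $\mathcal{N}$, descends to the quotient and extends to $\mathcal{K}$ with $\|\pi(x)\|\le\|x\|$. Multiplicativity and $\pi(x^*)=\pi(x)^*$ are immediate from the definition of the form.

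\item Define $Vh=1\otimes h$. Then $\|Vh\|^2=\langle\varphi(1)h,h\rangle\le\|\varphi(1)\|\,\|h\|^2$, so $V$ is bounded. For all $h,k$,
\[
\langle V^*\pi(x)Vh,k\rangle=\langle x\otimes h,1\otimes k\rangle=\langle\varphi(x)h,k\rangle ,
\]
so $\varphi=V^*\pi V$.
\end{enumerate}

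I am assuming $\mathcal{M}$ is unital, as the paper implicitly does. In the non-unital case I would replace $1\otimes h$ by a limit of $e_\lambda\otimes h$ along an approximate unit $(e_\lambda)$, or pass to the unitization. The main obstacle is the boundedness estimate in step 4, since it is what makes $\pi$ well defined on the quotient and continuous. I expect it to go through cleanly via the factorization $\|x\|^21-x^*x=c^*c$ together with complete positivity.
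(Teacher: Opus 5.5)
Your argument is correct. It is the standard vector-valued GNS construction on $\mathcal{M}\odot\mathcal{H}$: complete positivity makes the form positive, the bound $[a_i^*x^*xa_j]\le\|x\|^2[a_i^*a_j]$ makes $\pi(x)$ well defined and bounded, you set $Vh=1\otimes h$, and the converse follows from the routine amplification argument. The paper gives no proof of its own and simply quotes the theorem with a pointer to Paulsen, whose proof is exactly this one.
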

    \subsection{The Arveson extension theorem}
        An operator system is a subset of a $C^*$-algebra that is $*$-closed and contains the identity. A completely positive map is analogous to the definition for $C^*$-algebra except constraining your choice of block entries to the operator system. There is a natural way to lift completely positive maps on operator systems to the entire $C^*$-algebra.
        \begin{theorem}[Arveson extension theorem]
            Let $\mathcal{M}$ be a C$^*$-algebra.
            Let $X\subseteq \mathcal{M}$ be an operator system. Let
            $\varphi: X \rightarrow \BH$ be completely positive.
            Then there exists $\tilde{\varphi}: \mathcal{M} \rightarrow \BH$
            such that $\tilde{\varphi}|_X=\varphi.$
        \end{theorem}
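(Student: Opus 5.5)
The plan is the classical two-stage argument: first treat finite-dimensional targets $\BH = M_n$ by a Hahn--Banach/Krein argument, then pass to arbitrary $\mathcal{H}$ by a compactness (point-weak$^*$) limit. Throughout I use that a completely positive map $\varphi$ on an operator system $X$ is bounded, with $\|\varphi\| = \|\varphi(1)\|$. The same holds at every matrix level, so $\|\varphi\|_{cb} = \|\varphi(1)\|$. This bound is what will make the limiting step work.

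\emph{Step 1: the case $\mathcal{H} = \mathbb{C}^n$.} Given a linear map $\varphi: X \to M_n$, define a linear functional $s_\varphi$ on the operator system $M_n(X) \subseteq M_n(\mathcal{M})$ by
\[
s_\varphi([a_{ij}]) = \sum_{i,j} \langle \varphi(a_{ij}) e_j, e_i\rangle .
\]
I will check the Choi-type correspondence: $\varphi$ is completely positive if and only if $s_\varphi$ is positive on $M_n(X)$. One direction is immediate, since $s_\varphi(A) = \langle \varphi^{(n)}(A)\xi,\xi\rangle$ with $\xi = \sum_j e_j$. For the converse, one uses that the positivity of $\varphi^{(m)}$ can be tested by compressing positive elements of $M_m(X)$ with scalar matrices into $M_n(X)$. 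This correspondence is the only genuinely finite-dimensional computation.

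A positive functional on an operator system satisfies $\|s\| = s(1)$. By Hahn--Banach, extend $s_\varphi$ to $\tilde s$ on $M_n(\mathcal{M})$ with the same norm. Since $\|\tilde s\| = \tilde s(1)$, the extension is positive on the $C^*$-algebra $M_n(\mathcal{M})$; this is the Krein extension. Reading the correspondence backwards, $\tilde s$ defines $\tilde\varphi: \mathcal{M}\to M_n$ which is completely positive and extends $\varphi$.

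\emph{Step 2: general $\mathcal{H}$.} For each finite-dimensional subspace $F \subseteq \mathcal{H}$ with projection $P_F$, the compression $P_F\varphi(\cdot)P_F|_F : X \to B(F)\cong M_{\dim F}$ is completely positive. By Step 1 it extends to a completely positive $\psi_F:\mathcal{M}\to B(F)$. Regard $\psi_F$ as a map into $\BH$ by padding with zeros. These maps form a net indexed by $F$ under inclusion, uniformly bounded by $\|\psi_F(1)\| = \|P_F\varphi(1)P_F\| \le \|\varphi(1)\|$.

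The ball of radius $\|\varphi(1)\|$ in $B(\mathcal{M},\BH)$ is compact in the point-weak$^*$ (BW) topology, since $\BH$ is a dual space and Tychonoff applies. Take a cluster point $\tilde\varphi$ of the net. Complete positivity is preserved under point-weak$^*$ limits, because positivity of $[\tilde\varphi(a_{ij})]$ is tested by the weak$^*$-continuous functionals $\langle\,\cdot\,\xi,\xi\rangle$. For $x\in X$ and $h,k\in\mathcal{H}$, eventually $h,k\in F$, and then $\langle \psi_F(x)h,k\rangle = \langle \varphi(x)h,k\rangle$. Hence $\tilde\varphi|_X = \varphi$.

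\emph{Main obstacle.} The step I expect to need the most care is the converse direction of the Choi-type correspondence in Step 1, namely positivity of $s_\varphi$ implying complete positivity of $\varphi$. It must be verified at all matrix levels $m$, not just $m=n$. I would handle it by writing a positive $A\in M_m(X)$ tested against a vector $\eta\in(\mathbb{C}^n)^m$ as $s_\varphi(\alpha^* A\alpha)$ for a suitable scalar $m\times n$ matrix $\alpha$. This uses that $\alpha^*A\alpha$ remains positive in $M_n(X)$, which holds because $X$ is closed under scalar matrix compressions.

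Two smaller points also need care. The fact $\|s\| = s(1)$ for positive functionals on a non-algebra operator system requires the standard self-adjoint-part argument. The BW-compactness step is routine.
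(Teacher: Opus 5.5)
Your proposal is correct and is essentially the standard proof that the paper relies on; the paper gives no argument of its own and only cites Paulsen, where this argument appears: the $M_n$-valued case via the functional $s_\varphi$ on $M_n(X)$ and Krein's extension, then a BW-compactness limit of extensions of the finite-dimensional compressions $P_F\varphi(\cdot)P_F$. One notational fix: in the easy direction of your Choi-type correspondence the test vector should be $\xi=\sum_j e_j\otimes e_j\in\mathbb{C}^n\otimes\mathbb{C}^n$, i.e. the vector whose $j$-th block is $e_j$, rather than $\sum_j e_j$.
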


        We refer the interested reader to Paulsen \cite{paulsen2002completely}.
    \subsection{Homomorphic Tomiyama theorem}
        We need the following homomorphic version of Tomiyama's theorem \cite{tomiyama1957projection}, (also analyzed in \cite{pluta2017homomorphic})
        as given in \cite{pascoe2019cauchy} which gives some automatic bimodularity for completely positive maps.
        \begin{theorem}[Homomorphic Tomiyama theorem]
            Let $\mathcal{A}$ be a $C^*$-algebra, Let $B\subseteq A$
            be a subalgebra. Suppose $E: \mathcal{A} \rightarrow \mathcal{M}$ is completely positive and $E|_{\mathcal{B}}$
            is a unital homomorphism.
            Then,
                $$E(b_1ab_2)=E(b_1)E(a)E(b_2)$$
            for all $a \in \mathcal{A}$ and $b_1,b_2 \in \mathcal{B}.$
        \end{theorem}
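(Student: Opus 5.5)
The plan is to prove the statement via the Stinespring representation theorem, by showing that $\mathcal{B}$ lies in the multiplicative domain of $E$. We read ``subalgebra'' in the $C^*$-sense: $\mathcal{B}$ is closed under adjoints and contains the unit, and $E|_{\mathcal{B}}$ is a unital $*$-homomorphism. Without adjoint-closure one cannot form the positive element $b^*b$ with $b^* \in \mathcal{B}$, and the argument below breaks down. We may also regard $\mathcal{M}\subseteq \BH$ by the Gelfand--Naimark--Segal construction.

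\emph{Step 1: Stinespring dilation.} By the Stinespring representation theorem, write $E(x)=V^*\pi(x)V$ with $\pi:\mathcal{A}\to\BK$ a representation and $V:\mathcal{H}\to\mathcal{K}$ bounded. Unitality of $E|_{\mathcal{B}}$ gives $V^*V=E(1)=1$, so $V$ is an isometry and $P=VV^*$ is a projection.

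\emph{Step 2 (the key step): intertwining.} We show that $\pi(b)V=VE(b)$ for every $b\in\mathcal{B}$. For $b\in\mathcal{B}$, since $b^*\in\mathcal{B}$ and $E$ is multiplicative and $*$-preserving on $\mathcal{B}$,
\[
V^*\pi(b)^*\pi(b)V = E(b^*b) = E(b)^*E(b) = V^*\pi(b)^*VV^*\pi(b)V.
\]
Hence $V^*\pi(b)^*(1-P)\pi(b)V=0$. This expression equals $\big((1-P)\pi(b)V\big)^*\big((1-P)\pi(b)V\big)$ because $1-P$ is a projection, so $(1-P)\pi(b)V=0$. It follows that $\pi(b)V=P\pi(b)V=VE(b)$. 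Applying the same identity to $b^*$ and taking adjoints gives
\[
V^*\pi(b) = \big(\pi(b^*)V\big)^* = \big(VE(b^*)\big)^* = E(b)V^*.
\]

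\emph{Step 3: conclusion.} For $a\in\mathcal{A}$ and $b_1,b_2\in\mathcal{B}$, Step 2 gives
\[
E(b_1ab_2)=V^*\pi(b_1)\pi(a)\pi(b_2)V = E(b_1)\,V^*\pi(a)V\,E(b_2)=E(b_1)E(a)E(b_2).
\]

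The main obstacle is Step 2, namely extracting from multiplicativity on $\mathcal{B}$ the vanishing of the ``off-diagonal'' part $(1-P)\pi(b)V$. This is the usual Choi multiplicative-domain argument, and it is exactly the point where adjoint-closure of $\mathcal{B}$ is used.
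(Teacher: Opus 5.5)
Your argument is correct. It is the standard Choi multiplicative-domain argument run through a Stinespring dilation, and each step checks. Note that $E(b^*)=E(b)^*$ holds automatically for any positive map, so once $\mathcal{B}$ is $*$-closed, asking for a ``$*$-homomorphism'' adds nothing. The paper does not prove this statement; it only cites \cite{tomiyama1957projection} and \cite{pascoe2019cauchy}, so there is no internal proof to compare your route against.

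What deserves emphasis is that your two added hypotheses are necessary, not just a convenient reading. The statement as literally transcribed, for an arbitrary subalgebra, is false.

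\textbf{$*$-closure is needed.} Take $\mathcal{A}=M_2(\mathbb{C})$ with matrix units $e_{ij}$, let $\mathcal{B}$ be the upper triangular matrices, and set $E(X)=X_{11}$. This $E$ is a vector state. Its restriction $E|_{\mathcal{B}}$ is a unital homomorphism, because $(XY)_{11}=X_{11}Y_{11}$ for upper triangular $X,Y$. Yet $E(e_{12}e_{21})=1\neq 0=E(e_{12})E(e_{21})$.

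\textbf{The unit of $\mathcal{B}$ must be the unit of $\mathcal{A}$.} Take $\mathcal{A}=\mathbb{C}^2$, let $\mathcal{B}=\mathbb{C}\oplus 0$ with unit $p=(1,0)$, and set $E(x,y)=x+y$. For $a=(0,1)$ we get $E(p\,a\,p)=0\neq 1=E(p)E(a)E(p)$.

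Both hypotheses hold in the paper's one application, where $\mathcal{B}=\mathbb{C}[H]\subseteq\mathbb{C}[G]$ and $E|_{\mathcal{B}}$ is the trivial character. There $\mathcal{A}=\mathbb{C}[G]$ is only a $*$-algebra, not a $C^*$-algebra. Your proof still covers this case, since it uses nothing beyond a dilation $E=V^*\pi(\cdot)V$ with $\pi$ a $*$-representation, which is the Stinespring dilation the paper itself invokes in Theorem~\ref{cpdthm}.
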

    \subsection{Completely positive maps on group algebras}
        \begin{definition}
            Let $G$ be a locally compact group.
            We define the group algebra $\mathbb{C}[G]$ to be the set of formal sums
            \[
                \sum a_ig_i
            \]
            where $g_i\in G$ and $a_i \in \mathbb{C}.$
            There is a natural involution on $\mathbb{C}[G]$ which satisfies $g^* = g^{-1}.$
            There is a natural representation of $\mathbb{C}[G]$ given by 
                \[\|\sum a_ig_i\| = \sup_{\Pi(G)} \|\sum a_i\pi(g)\|\]
            where $\Pi(G)$ is the set of all continuous unitary representations of $\mathbb{G}.$
            We define the complete group algebra $\overline{\mathbb{C}[G]}$ to be the completion of $\mathbb{C}[G]$ with respect to $\|\cdot\|.$
        \end{definition}
        We note that in general the complete group algebra $\overline{\mathbb{C}[G]}$ is not the same object as the full group $C^*$-algebra $C^*(G)$ of a locally compact group, which is defined in terms of continuous functions on the group, unless, of course, the group is discrete. The necessity of dealing with the group algebra itself arises directly from the interpolation problems we will consider later-- we want to understand functions which ar positive definite on some (often finite) subset of a homogeneous space $G/H$ and so our interpolation data may not be enough to induce a map on an algebra of continuous functions. For the most part we merely need to work in $\mathbb{C}[G]$ itself and not its completion.

        The following Theorem of Helton and McCullough will be useful to us.
        Such noncommutative sums of squares were themselves inspired by the work Schm\"udgen \cite{schmud,schmudsurv} and Putinar \cite{putinarposss} on positive polynomials in several variables, which itself was inspired by the classical work of Fej\'er \cite{fejer} and Riesz \cite{riesz1916}.
        \begin{theorem}[Helton, McCullough \cite{heltmc}]
            Let $\hilbert$ be an infinite dimensional separable Hilbert space.
            Let $\mathbb{C}<x_1,\ldots, x_d, x_1^*,\ldots,x_d^*>$ denote the noncommutative
            polynomials in the variables $x_1,\ldots,x_d$
            and their adjoints $x_1^*, \ldots, x_d^*.$
            Let $\mathcal{P}$ be a subset of
            selfadjoint elements of
            $\mathbb{C}<x_1,\ldots, x_d, x_1^*,\ldots,x_d^*>$
            containing elements of the form $C_i-x_i^*x_i,$ where $C_i\in \mathbb{R}.$
            Let 
                \[\mathcal{D}_\mathcal{P}= \{X \in \BH^d | p(X) \textrm{ is positive semidefinite for all }p\in \mathcal{P}\}\]
            An element $p\in M_n(\mathbb{C}<x_1,\ldots, x_d, x_1^*,\ldots,x_d^*>)$ is positive definite on $\mathcal{D}_{\mathcal{P}}$ if and only if 
            \[p = \varepsilon + \sum^N_{i=1} a_i^*a_i + \sum^M_{j=1} b_j^*p_jb_j\]
            for some $\varepsilon>0, a_i,b_j \in M_{1n}(\mathbb{C}<x_1,\ldots, x_d, x_1^*,\ldots,x_d^*>)$
            and $p_i\in \mathcal{P}.$
        \end{theorem}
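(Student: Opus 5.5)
The plan has two parts: the easy direction is direct, and the converse goes by a Hahn--Banach separation followed by a GNS construction. Throughout, write $\mathcal{A}=\mathbb{C}<x_1,\ldots, x_d, x_1^*,\ldots,x_d^*>$ and let $M\subseteq M_n(\mathcal{A})$ be the quadratic module of all finite sums $\sum a_i^*a_i+\sum b_j^*p_jb_j$ with $a_i,b_j\in M_{1n}(\mathcal{A})$ and $p_j\in\mathcal{P}$.

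For the ``if'' direction, evaluate at any $X\in\mathcal{D}_\mathcal{P}$. Each $a_i(X)^*a_i(X)$ is positive semidefinite, and each $b_j(X)^*p_j(X)b_j(X)$ is positive semidefinite because $p_j(X)\geq 0$. Adding $\varepsilon>0$ then gives $p(X)\geq \varepsilon$, so $p$ is positive definite on $\mathcal{D}_\mathcal{P}$.

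For the ``only if'' direction I argue by contraposition. Suppose $p-\varepsilon\notin M$ for some fixed $\varepsilon>0$; I aim to produce $X\in\mathcal{D}_\mathcal{P}$ and a unit vector $v$ with $\langle p(X)v,v\rangle\leq\varepsilon$. Since this must hold for every $\varepsilon$ at which the representation fails, this rules out the uniform strict positivity I read as ``positive definite''. The first step is to show $M$ is archimedean: for every $h$ there is $N$ with $N-h^*h\in M$. The hypothesis $C_i-x_i^*x_i\in\mathcal{P}$ handles the generators. The set of bounded elements is closed under sums and products, by the standard identity $N^2-(gh)^*(gh)=h^*(N^2-g^*g)h+N^2(\ldots)$-type rewriting. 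This makes $1$ (or $I_n$) an algebraic interior point, i.e.\ an order unit, of the cone $M$.

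The Eidelheit--Kakutani separation theorem, which needs no topology once an algebraic interior point exists, then gives a linear functional $L$ on the selfadjoint part of $M_n(\mathcal{A})$ with three properties: $L\geq 0$ on $M$, $L(p-\varepsilon)\leq 0$, and, after normalizing via the order unit, $L(I_n)=1$. Next I run the GNS construction on $\mathcal{A}^{1\times n}$ with $\langle f,g\rangle=L(g^*f)$; this is positive because $f^*f\in M$. I quotient by null vectors and complete to a Hilbert space $\mathcal{K}$. Left multiplication by $x_i$ defines $X_i$, which is bounded because $L(f^*(C_i-x_i^*x_i)f)\geq0$ gives $\|X_if\|^2\leq C_i\|f\|^2$. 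In the same way $L(f^*p_jf)\geq0$ gives $p_j(X)\geq 0$, so $X\in\mathcal{D}_\mathcal{P}$. Taking $v=(e_1,\ldots,e_n)\in\mathcal{K}^n$, built from the unit rows, yields $\langle p(X)v,v\rangle=L(p)\leq\varepsilon L(I_n)=\varepsilon\|v\|^2$. Finally, $\mathcal{K}$ is separable because $\mathcal{A}$ has a countable basis, and replacing $X$ by the infinite ampliation $X\oplus X\oplus\cdots$ lands us on $\mathcal{H}$ itself while keeping membership in $\mathcal{D}_\mathcal{P}$ and the bound on $p$.

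The main obstacle is the archimedean step and the separation it enables. Everything hinges on $I_n$ being an order unit of $M$ inside $M_n(\mathcal{A})$. For the matrix case I would first prove boundedness for scalar polynomials, then pass to matrix entries through $M_{1n}$ conjugations such as $E_{kl}$-type row vectors, checking that $N I_n-H^*H\in M$ for matrix-valued $H$.
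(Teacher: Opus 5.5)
The paper does not prove this statement. It is quoted from Helton--McCullough and used only as a black box, notably in the ``large'' version that follows. Your outline is essentially the argument of that source: archimedean quadratic module, Eidelheit--Kakutani separation at the order unit $I_n$, GNS on row vectors, then ampliation to land in the fixed separable $\hilbert$. The separation, GNS and ampliation steps are carried out correctly. Like the statement, you tacitly need $p=p^*$ in order to separate $p-\varepsilon$ inside the self-adjoint part. For non-self-adjoint $p$ the claim is false: take $p=2+x-x^*$ with $\mathcal{P}=\{1-x^*x,\,-(x-x^*)^*(x-x^*)\}$.

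One step is genuinely missing, in the archimedean argument. The hypothesis $C_i-x_i^*x_i\in\mathcal{P}$ only bounds $x_i$. Since $\mathcal{A}$ is generated by the $x_i$ \emph{and} the $x_i^*$, closure of the bounded elements under sums and products does not reach all of $\mathcal{A}$. Without that, $I_n$ is not shown to be an order unit, so the separation is not yet justified. You also need $N-x_ix_i^*\in M$, or equivalently that the bounded elements form a $*$-subalgebra. After replacing $C_i$ by $C_i+1>0$ if necessary, this follows from
\[
C_i^2-C_i\,x_ix_i^*=(C_i-x_ix_i^*)^*(C_i-x_ix_i^*)+x_i\,(C_i-x_i^*x_i)\,x_i^*\in M .
\]

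A separate point concerns how ``positive definite'' is read. Your contrapositive produces a different $X_\varepsilon$ for each $\varepsilon$, so it proves the theorem only under the uniform reading you adopt. The paper, in the large version, reads failure of positive definiteness pointwise: some $X$ and $v\neq 0$ with $\langle p(X)v,v\rangle\le 0$. Under that reading you need a compactness step:
\begin{enumerate}
\item The relation $N_hI_n\pm h\in M$ gives $|L(h)|\le N_h$ for every normalized $L\ge 0$ on $M$.
\item Hence the functionals $L_{1/k}$ have a pointwise cluster point $L$ with $L\ge 0$ on $M$, $L(I_n)=1$ and $L(p)\le 0$.
\item GNS applied to this single $L$ yields one $X\in\mathcal{D}_\mathcal{P}$ and a unit vector $v$ with $\langle p(X)v,v\rangle\le 0$.
\end{enumerate}
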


        The Helton-McCullough Positivstellensatz formally-speaking required finitely many variables.
        However, an appropriate reformulation holds for arbitrary collections of indeterminants.
        \begin{theorem}[Large Helton-McCullough Positivstellensatz]
            Let $\mathbb{X}$ be a collection of formal letters of any cardinality.
            Let $\hilbert$ be a Hilbert space with Hilbert space dimension at least $|\mathbb{X}\times \mathbb{N}|.$
            Let $\mathbb{C}<\mathbb{X}, \mathbb{X}^*>$ denote the noncommutative
            polynomials in the letters of $\mathbb{X}$
            and their adjoints $\mathbb{X}^*.$
            Let $\mathcal{P}$ be a subset of
            selfadjoint elements of
            $\mathbb{C}<\mathbb{X}, \mathbb{X}^*>$
            containing elements of the form $C_x-x^*x,$ where $C_x\in \mathbb{R}$
            for each $x\in \mathbb{X}.$
            Let 
                \[\mathcal{D}_\mathcal{P}= \{X \in \BH^d | p(X) \textrm{ is positive semidefinite for all }p\in \mathcal{P}\}\]
            An element $p\in M_n(\mathbb{C}<\mathbb{X}, \mathbb{X}^*>)$ is positive definite on $\mathcal{D}_{\mathcal{P}}$ if and only if 
            \[p = \varepsilon + \sum^N_{i=1} a_i^*a_i + \sum^M_{j=1} b_j^*p_jb_j\]
            for some $\varepsilon>0, a_i,b_j \in M_{1n}(\mathbb{C}<\mathbb{X}, \mathbb{X}^*>)$
            and $p_i\in \mathcal{P}.$
        \end{theorem}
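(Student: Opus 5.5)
The plan is to reduce the large version to the finite-variable Helton--McCullough theorem by a compactness/finiteness argument. The easy direction is as in the finite case: if $p = \varepsilon + \sum a_i^*a_i + \sum b_j^*p_jb_j$, then for any tuple $X\in\mathcal{D}_\mathcal{P}$ each $a_i(X)^*a_i(X)\succeq 0$ and $b_j(X)^*p_j(X)b_j(X)\succeq 0$. Hence $p(X)\succeq \varepsilon>0$, so $p$ is positive definite on $\mathcal{D}_\mathcal{P}$.

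For the converse, suppose $p$ admits no such representation. I will run the standard separation argument directly rather than invoking the finite theorem. Let $\Sigma$ be the convex cone of elements $\sum a_i^*a_i+\sum b_j^*p_jb_j$ in the selfadjoint part of $M_n(\mathbb{C}<\mathbb{X},\mathbb{X}^*>)$.

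1. Show that $1$ is an algebraic interior point (order unit) of $\Sigma$. The hypothesis $C_x - x^*x\in\mathcal{P}$ gives, for every word $w$, that $C_w - w^*w\in\Sigma$ by the usual induction: $C_x w'^*w' - w'^*x^*xw' = w'^*(C_x-x^*x)w'$. Polarization then handles general matrix entries. Only finitely many letters are involved in any given element, so this step is purely algebraic and does not care about the cardinality of $\mathbb{X}$.

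2. Since $p-\varepsilon\notin\Sigma$ for every $\varepsilon>0$, apply the Eidelheit--Kakutani separation theorem (valid in any real vector space of any dimension once an algebraic interior point exists). This yields a linear functional $L$ with $L(\Sigma)\geq 0$, $L(1)=1$ (after normalizing), and $L(p)\leq 0$.

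3. Perform the GNS construction on $\mathbb{C}<\mathbb{X},\mathbb{X}^*>\otimes\mathbb{C}^n$ with the form $\langle f,g\rangle = L(g^*f)$, and quotient by null vectors. Step 1 shows that left multiplication by each letter $x$ is bounded by $\sqrt{C_x}$, so it extends to a bounded operator $X_x$ on the completion $\mathcal{K}$. Positivity of $L$ on $b^*p_jb$ gives $p_j(X)\succeq 0$. Since $L(p)\leq 0$, the vector $\xi$ coming from $1\otimes\mathbb{C}^n$ satisfies $\langle p(X)\xi,\xi\rangle\leq 0$, so $p(X)$ is not positive definite.

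4. Place the tuple into $\hilbert$. The space $\mathcal{K}$ has dimension at most the number of words times $n$, which is $\leq |\mathbb{X}\times\mathbb{N}|$. Embed $\mathcal{K}$ isometrically into $\hilbert$, letting the operators act as $X_x\oplus 0$ on the complement. This preserves $p_j(X)\succeq 0$, since each $p_j$ evaluated on a direct sum with the zero tuple picks up only the constant terms. Here I use that $C_x\ge 0$ and, more generally, that the constant parts of the $p_j$ are $\geq 0$, which follows from $L\geq0$ and can be arranged by restricting to an invariant copy instead of using a $0$-padding. Alternatively, I will take the direct sum of $|\mathbb{X}\times\mathbb{N}|$ copies of $\mathcal{K}$, adjusting the dimension so that it equals $\dim\hilbert$, and avoid the zero summand altogether. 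This contradicts positive definiteness on $\mathcal{D}_\mathcal{P}$.

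The main obstacle is the strict versus non-strict gap. Separation gives $L(p)\leq 0$, which shows only that $p(X)$ fails to be $\succeq\delta$ for a vector state. For finite-dimensional $\mathcal{D}$ one would worry here, but the hypothesis "positive definite on $\mathcal{D}_\mathcal{P}$" should be interpreted as $p(X)\succeq\delta_X$ with $\delta_X>0$ for each $X$. Since $\langle p(X)\xi,\xi\rangle\le0$ with $\xi\neq0$ already violates that, the argument goes through. I will also need to check carefully that the dimension count in step 4 works for finite $\mathbb{X}$; there $|\mathbb{X}\times\mathbb{N}|=\aleph_0$ and the statement reduces to the original theorem.
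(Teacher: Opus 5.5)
Your proof is correct once two local repairs (below) are made, and it takes a different route from the paper.

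\textbf{How the routes differ.} The paper does not redo the separation argument. It treats the finite-variable Helton--McCullough theorem as a black box and applies it for each finite set $\Lambda$ of letters. This gives $X_\Lambda\in\mathcal{D}_{\mathcal{P}_\Lambda}$ and unital maps $\sigma_\Lambda(q)=[\langle q(X_\Lambda)v_i,v_j\rangle]_{i,j}$ that are positive on squares and on $\mathcal{P}_\Lambda$ but fail at $p$. It then takes a limit point $\sigma$ of this net and runs GNS on $\sigma$. You instead reprove the theorem directly in the large algebra: order unit, Eidelheit--Kakutani separation in a real vector space of arbitrary dimension, then GNS.

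\textbf{What each route buys.} The paper's route is shorter, and it shows the large statement is a compactness consequence of the finite one. Yours is self-contained and makes explicit two things the paper uses silently. The first is the norm bounds coming from $C_x-x^*x\in\mathcal{P}$; the paper needs these to get pointwise boundedness of the net (so a limit point exists) and bounded GNS operators. The second is the dimension count that places the GNS tuple inside the given $\mathcal{H}$, which the paper never addresses.

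\textbf{Repair in Step 1.} The induction over words must also handle words beginning with $x^*$. For that you need $C_x-xx^*$ in the quadratic module, which is not a hypothesis. First enlarge $C_x$ so that $C_x>0$; this keeps $C_x-x^*x$ in the module. Then
\[
C_x(C_x-xx^*)=(C_x-xx^*)^*(C_x-xx^*)+x\,(C_x-x^*x)\,x^*,
\]
and dividing by $C_x$ gives what you need.

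\textbf{Repair in Step 4.} Discard the zero padding. The claim that the constant terms of the $p_j$ are nonnegative does not follow from $L\geq 0$ on $\Sigma$, and it is false in general. For example, with $p_j=x^*x-1$ the padded tuple gives $p_j(X\oplus 0)=p_j(X)\oplus(-1)\not\succeq 0$.

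Your alternative is the right one. Since $\dim\mathcal{H}$ is infinite and at least $\dim\mathcal{K}$, the ampliation $\bigoplus_{\dim\mathcal{H}}\mathcal{K}$ is unitarily equivalent to $\mathcal{H}$. Ampliation preserves $p_j(X)\succeq 0$. The vector $\xi\oplus 0\oplus\cdots$ still gives $\langle p(X)\xi,\xi\rangle=L(p)\le 0$ with $\|\xi\|^2=L(1)=1$. This rules out positive definiteness under any reading, so your ``strict versus non-strict'' worry is not an obstacle.
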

        \begin{proof}
            Suppose $p$ is not of the above form.
            Order the finite subsets $\Lambda$ of $\mathbb{X}$ by inclusion.
            For each $\Lambda,$ let $\mathcal{P}_{\Lambda}$ denote the subcollection of $\mathcal{P}$
            only using letters from $\Lambda.$
            As $p$ is not of the above form, a fortiori it is not of such a form for the corresponding subcollection of variables containing the variables of $\Lambda,$
            so by the Helton-McCullough Positivstellensatz, 
            there exists $X_{\Lambda} \in \mathcal{D}_{\mathcal{P}_{\Lambda}}$
            such that $p(X_{\Lambda})$ is not positive definite. Namely, there are orthonormal vectors
            $v_1, \ldots, v_n \in \hilbert$ such that $[\langle p_{ij}(X_\Lambda)v_i,v_j\rangle ]_{i,j}$ is not positive definite.
            Thus, the map $\sigma_{\Lambda}$ from $\mathbb{C}<\Lambda, \Lambda^* >$
            given by $q \mapsto [\langle q(X_\Lambda)v_i,v_j \rangle ]_{i,j}$ satisfies
            that $\sigma_{\Lambda}$ is positive semidefinite on $\mathcal{P}_{\Lambda},$
            and  that $[\sigma(q_i^*q_j)]_{i,j}$ is positive semidefinite for any choice of $q_i$ and $\sigma_{\Lambda}(1)$ is the identity.
            Let $\sigma$ be a limit point with respect to the net $\sigma_{\Lambda}.$
            Note $\sigma$ is unital and $\sigma_{\Lambda}$ is positive semidefinite on $\mathcal{P}_{\Lambda}.$
            Note $[\sigma(p_{ij})]_{i,j}$ is not positive semidefinite.
            Thus, applying the Gelfand-Naimark-Segal construction to $\sigma$
            gives an $X \in \mathcal{D}_{\mathcal{P}}$ such that $p(X)$ is not positive definite.
        \end{proof}

        Taking $\mathcal{P}$ to be the collection of self-adjoint polynomials
        containing $\pm(1-x_g^*x_g), \pm(1-x_gx_g^*), -(x_g^*-x_{g^{-1}})^*(x_g^*-x_{g^{-1}})$ over all $g\in G,$ $-(x_gx_{\tilde{g}}-x_{g\tilde{g}})^*(x_gx_{\tilde{g}}-x_{g\tilde{g}})$ over all pairs, we immediately obtain the following.
        \begin{lemma} \label{positivityofgrpelt}
            Let $G$ be a discrete group.
            If $p\in M_n(\mathbb{C}[G])$ is positive definite, then
            $p = \sum^N_{i=1} a_i^*a_i$
            for some $a_i \in M_{1n}(\mathbb{C}[G]).$
        \end{lemma}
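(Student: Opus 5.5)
We will deduce Lemma \ref{positivityofgrpelt} from the Large Helton--McCullough Positivstellensatz by encoding the group structure of $G$ in the constraint set $\mathcal{P}$ described just before the statement. Take one formal letter $x_g$ for each $g\in G$, so $\mathbb{X}=\{x_g\}_{g\in G}$. Fix a Hilbert space $\hilbert$ whose dimension is at least $|G\times\mathbb{N}|$. Let $\mathcal{P}$ consist of the polynomials $\pm(1-x_g^*x_g)$, $\pm(1-x_gx_g^*)$, $-(x_g^*-x_{g^{-1}})^*(x_g^*-x_{g^{-1}})$ and $-(x_gx_{\tilde g}-x_{g\tilde g})^*(x_gx_{\tilde g}-x_{g\tilde g})$. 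Each $1-x_g^*x_g$ has the required form $C_x-x^*x$ with $C_x=1$, so the theorem applies.

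\emph{Step 1: identify the domain.} A tuple $X=(X_g)\in\mathcal{D}_{\mathcal{P}}$ satisfies the following. From $\pm(1-X_g^*X_g)\succeq 0$ and $\pm(1-X_gX_g^*)\succeq 0$, each $X_g$ is unitary. From $-(Y)^*Y\succeq 0$ we get $Y=0$, so $X_g^*=X_{g^{-1}}$ and $X_gX_{\tilde g}=X_{g\tilde g}$. Hence $g\mapsto X_g$ is a unitary representation $\pi$ of $G$ on $\hilbert$, and conversely every representation of $G$ on $\hilbert$ arises this way. Since $G$ is discrete, no continuity issue arises.

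\emph{Step 2: from positivity to a certificate.} Let $\tilde p\in M_n(\mathbb{C}<\mathbb{X},\mathbb{X}^*>)$ be the lift of $p$ obtained by replacing each $g$ with $x_g$. Then $\tilde p(X)=(\mathrm{id}\otimes\pi)(p)$. The hypothesis that $p$ is positive definite, read in $\overline{\mathbb{C}[G]}$, gives $p\succeq\delta>0$ in every representation; in particular $\tilde p$ is positive definite on $\mathcal{D}_{\mathcal{P}}$. (Representations on smaller spaces embed into $\hilbert$ after adding a trivial summand, so checking on $\hilbert$ suffices.) The Large Helton--McCullough theorem then yields
\[
\tilde p=\varepsilon+\sum_i a_i^*a_i+\sum_j b_j^*p_jb_j .
\]

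\emph{Step 3: push the certificate down to $\mathbb{C}[G]$.} Apply the $*$-homomorphism $\mathbb{C}<\mathbb{X},\mathbb{X}^*>\to\mathbb{C}[G]$ defined by $x_g\mapsto g$ and $x_g^*\mapsto g^{-1}$. Under this map every element of $\mathcal{P}$ goes to $0$, since $1-g^{-1}g=0$, $g^{-1}-g^{-1}=0$ and $g\tilde g-g\tilde g=0$. So the $b_j^*p_jb_j$ terms vanish and $p=\varepsilon+\sum a_i^*a_i$ with $a_i\in M_{1n}(\mathbb{C}[G])$. Finally, absorb $\varepsilon I_n$ as the sum of squares $\sum_k(\sqrt{\varepsilon}e_k)^*(\sqrt{\varepsilon}e_k)$, where $e_k$ is the $k$th standard row vector times the identity of $G$. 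This gives the claimed form $p=\sum a_i^*a_i$.

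The main obstacle is Step 2: pinning down exactly what "positive definite" means for $p\in M_n(\mathbb{C}[G])$ so that the strict positivity needed by Helton--McCullough holds on all of $\mathcal{D}_{\mathcal{P}}$. A merely positive semidefinite $p$ only gives $p+\varepsilon$ certificates for every $\varepsilon>0$. I will therefore read the hypothesis as strict positivity, $p\succeq\delta$ in $M_n(\overline{\mathbb{C}[G]})$, and in that reading the $\varepsilon$ is harmlessly absorbed as above.
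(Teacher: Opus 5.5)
Your proposal is correct and is the paper's argument: the paper simply invokes the Large Helton--McCullough Positivstellensatz with this same $\mathcal{P}$ and declares the lemma immediate. Your Steps 1--3 supply exactly the omitted details: reading $\mathcal{D}_{\mathcal{P}}$ as unitary representations, pushing the certificate through $x_g\mapsto g$ (which kills every element of $\mathcal{P}$), and absorbing $\varepsilon$ as a sum of squares. One small fix is needed. The Positivstellensatz really concerns symmetric polynomials, since every certificate $\varepsilon+\sum a_i^*a_i+\sum b_j^*p_jb_j$ is symmetric, while your lift $\tilde p$ is not ($x_g^*\neq x_{g^{-1}}$ in the free algebra). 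So apply it to $\frac{1}{2}(\tilde p+\tilde p^{*})$ instead: this agrees with $\tilde p$ on $\mathcal{D}_{\mathcal{P}}$ and still maps to $p$ because $p=p^*$.
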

        Other results for sums of squares in group algebras have been obtained by Netzer and Thom
        \cite{netzer2013real}.
        We note that Mehta, Slofstra and Zhao \cite{mehta2023positivity} have considered undecidability questions with respect to determining sums-of-squares in group algebras in detail, which is somewhat inherited from undecidability of the word problem.
        That is, the Helton-McCullough result is inherently nonconstructive. 

        The following gives a condition for a map on a group algebra coming from a discrete group to be completely positive.
        \begin{lemma}\label{lemmaposcondinit}
        Let $G$ be a discrete group.
        A map $\varphi: \mathbb{C}[G] \rightarrow \mathbb{C}$
        is completely positive if and only if given any $g_1, \ldots, g_N,$
        the matrix $[\varphi(g_i^{-1}g_j)]_{i,j}$ is positive semidefinite.
        \end{lemma}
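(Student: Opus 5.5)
The plan is to prove both implications directly from the algebraic structure of $\mathbb{C}[G]$. The involution $g^*=g^{-1}$ turns the matrix $[g_i^{-1}g_j]_{i,j}$ into a ``Gram matrix'' $v^*v$. For the converse, Lemma \ref{positivityofgrpelt} reduces every positive element of $M_n(\mathbb{C}[G])$ to sums of such Gram-type squares.

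\emph{Necessity.} Suppose $\varphi$ is completely positive and fix $g_1,\ldots,g_N\in G$. Let $v=(g_1,\ldots,g_N)\in M_{1N}(\mathbb{C}[G])$. Then the $(i,j)$ entry of $v^*v$ is $g_i^*g_j=g_i^{-1}g_j$, so $[g_i^{-1}g_j]_{i,j}=v^*v$ is positive semidefinite in $M_N(\mathbb{C}[G])$. Applying $\varphi$ entrywise and using complete positivity shows that $[\varphi(g_i^{-1}g_j)]_{i,j}$ is positive semidefinite.

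\emph{Sufficiency.} Assume every matrix $[\varphi(g_i^{-1}g_j)]_{i,j}$ is positive semidefinite, and let $X\in M_n(\mathbb{C}[G])$ be positive semidefinite.

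1. For each $\varepsilon>0$ the element $X+\varepsilon I$ is positive definite. By Lemma \ref{positivityofgrpelt} we can write $X+\varepsilon I=\sum_k a_k^*a_k$ with $a_k\in M_{1n}(\mathbb{C}[G])$. (If positivity is taken in the purely algebraic sense $X=Y^*Y$, this step is unnecessary and we may set $\varepsilon=0$.)

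2. By linearity it suffices to show that $[\varphi(a_i^*a_j)]_{i,j}$ is positive semidefinite for a single row $a=(a_1,\ldots,a_n)$. Expand $a_i=\sum_{g} c_{i,g}\,g$ as a finite sum and take $\lambda\in\mathbb{C}^n$. Setting $d_g=\sum_i \lambda_i c_{i,g}$, we get
\[
\sum_{i,j}\overline{\lambda_i}\lambda_j\,\varphi(a_i^*a_j)=\sum_{g,h}\overline{d_g}\,d_h\,\varphi(g^{-1}h).
\]
The right-hand side is nonnegative by the hypothesis applied to the finitely many $g$ with $d_g\neq 0$.

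3. Hence $[\varphi(X_{ij})]_{i,j}+\varepsilon[\varphi(\delta_{ij}1)]_{i,j}$ is positive semidefinite for every $\varepsilon>0$. Letting $\varepsilon\to 0$ in the finite-dimensional matrix space gives that $[\varphi(X_{ij})]_{i,j}$ is positive semidefinite.

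The main obstacle is the first step of sufficiency, namely pinning down what ``positive semidefinite'' means in $M_n(\mathbb{C}[G])$ and showing it reduces to sums of squares $a^*a$. This is exactly where Lemma \ref{positivityofgrpelt}, a consequence of the Large Helton--McCullough Positivstellensatz, enters. The $\varepsilon$-perturbation is what lets a strictly-positive sum-of-squares statement cover semidefinite elements. The computation in step 2 is routine bookkeeping.
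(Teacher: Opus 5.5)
Your proof is correct and takes the paper's route: use Lemma \ref{positivityofgrpelt} to write the positive matrix as a sum of squares $a_t^*a_t$, then expand over a finite list of group elements so that applying $\varphi$ gives a congruence of $[\varphi(g_i^{-1}g_j)]_{i,j}$ (your quadratic form in the $d_g$ is exactly the paper's $\sum_t A_t^*[\varphi(g_i^{-1}g_j)]A_t$). You also supply two things the paper's proof leaves implicit: the necessity direction via $[g_i^{-1}g_j]=v^*v$, and the $\varepsilon$-perturbation that handles semidefinite rather than strictly positive definite inputs.
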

        \begin{proof}
            Let $M=[m_{ij}]_{i,j}$ be a positive definite matrix over $\mathbb{C}[G]$
            As $M$ is positive definite, $M = \sum^T_{t=1} a_t^*a_t$ as in
            Lemma \ref{positivityofgrpelt}
            where $A$ is some rectangular matrix with entries in $\mathbb{C}[G].$ 
            Note we may simulatenously write \[a_t = \begin{bmatrix}
                g_1 & \ldots & g_N
            \end{bmatrix} A_t\]
            where $A_i$ is a matrix with complex number entries.
            Thus 
                $M = \sum A_t^*[g_i^{-1}g_j]A_t$
            and so $\varphi(M)= \sum A_t^*[\varphi(g_i^{-1}g_j)]A_t$
            which is positive by assumption.
        \end{proof}

        We note that Ozawa \cite{ozawa2016noncommutative}, followed by Netzer and Thom \cite{netzer2015kazhdan} have used similar semidefinite programming type ideas to characterize property (T).


\section{Invariant completely positive definite functions on homogeneous spaces} \label{homogeneous}
    We now define the notion of a completely positive definite function, which generalize classic positive definite functions, but also allow for a highly austere treatment along the lines of Bochner using complete positivity.
    \begin{definition}
        Let $\hilbert$ be Hilbert space.
        Let $G$ be a locally compact group and let $H$ be a closed subgroup of $G.$
        We say a function $\varphi: G/H \rightarrow \BH$
        is {\bf completely positive definite} if
        the induced linear map satisfying $g \mapsto \varphi(g / H)$
        on the group algebra $\mathbb{C}[G]$ is completely positive.
    \end{definition}
    We note that the matrix positive definite functions play a role in packings with several radii \cite{de2014upper}. In fact, to prove the Cohn--Elkies and Kabatiansky--Levenshtein counterparts of the several radii version needs the Schoenberg-type result meant for matrix pd functions.

    We first note that any completely positive definite function on $G/H$ is in fact $H$-invariant, and thus the induced completely positive map is bi-invariant.
    \begin{proposition}
        Let $\hilbert$ be Hilbert space.
        Let $G$ be a group and let $H$ be a subgroup of $G.$
        Let $\varphi: G/H \rightarrow \BH$ be completely positive definite. 
        Then, \[\varphi(h \cdot x) = \varphi(x)\]
        for all $h\in H$ and $x \in G/H.$
    \end{proposition}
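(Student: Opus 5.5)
Let $\Phi:\mathbb{C}[G]\to\BH$ denote the induced completely positive map, determined on group elements by $\Phi(g)=\varphi(gH)$. The plan is to check that $\Phi$ restricts to a unital homomorphism on the subalgebra $\mathbb{C}[H]$, and then to read off the invariance from the Homomorphic Tomiyama theorem.

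The first observation is immediate. For $h\in H$ we have $hH=H$, so $\Phi(h)=\varphi(H)=\Phi(e)$. Put $A=\varphi(H)=\Phi(1)$.

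Next, consider the $2\times 2$ matrix $\begin{bmatrix} 1 & h\\ h^{-1} & 1\end{bmatrix}$ over $\mathbb{C}[G]$. It equals $a^*a$ with $a=\begin{bmatrix}1 & h\end{bmatrix}$, so it is positive. Applying $\Phi$ entrywise gives the positive operator matrix $\begin{bmatrix} A & A\\ A & A\end{bmatrix}$. This is consistent, but by itself it does not yet produce a unital homomorphism, because $A$ need not be the identity.

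To fix this, I would first normalize. Replace $\Phi$ by $A^{-1/2}\Phi(\cdot)A^{-1/2}$, which is completely positive and unital. This is legitimate when $A$ is invertible. In general, I would pass to the compression onto $\overline{\operatorname{ran}A}$: positivity of the matrix $[\Phi(g_i^{-1}g_j)]$ with $g_1=e$ forces every $\Phi(g)$ to be supported on $\overline{\operatorname{ran} A}$. If $A$ is not invertible there, I would instead use $(A+\varepsilon)^{-1/2}$ and take limits, or argue directly as follows.

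With $\Phi$ unital, the restriction $\Phi|_{\mathbb{C}[H]}$ sends every $h$ to $1$. This is the trivial character, which is a unital $*$-homomorphism. The Homomorphic Tomiyama theorem then gives
\[
\Phi(hg)=\Phi(h)\Phi(g)=\Phi(g).
\]
Since $\Phi(hg)=\varphi(h\cdot gH)$, this yields $\varphi(h\cdot x)=\varphi(x)$ for $x=gH$. Undoing the normalization by conjugating back with $A^{1/2}$ recovers the claim for the original $\varphi$.

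The main obstacle is the normalization step when $A$ is not invertible. A fallback avoids Tomiyama altogether. Apply $\Phi$ to the positive element $(1-h)^*(1-h)$, whose value is $2A-\Phi(h)-\Phi(h^{-1})=0$. Then apply the Cauchy--Schwarz (Kadison-type) argument through the $3\times3$ positive matrix built from $a=\begin{bmatrix}1 & h & g\end{bmatrix}$. Its image under $\Phi$ contains the block $\begin{bmatrix} A & A & \Phi(g)\\ A & A & \Phi(hg)\\ \Phi(g)^* & \Phi(hg)^* & \Phi(e)\end{bmatrix}\ge 0$. Compressing by the vectors $(\xi,-\xi,0)$ and using positivity shows that $(\Phi(g)-\Phi(hg))^*\xi=0$ for all $\xi$. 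Hence $\Phi(hg)=\Phi(g)$ directly, with no invertibility needed.
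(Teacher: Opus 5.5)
Your proposal is correct. Its first route is the paper's proof; its fallback is a genuinely different argument, and the fallback is what closes the proof.

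The paper declares ``without loss of generality $\varphi(e/H)=1$'', so that the induced map is the trivial character on $\mathbb{C}[H]$, and then applies the homomorphic Tomiyama theorem. You rightly flag that this normalization is not automatic when $A=\varphi(eH)$ is not invertible, and the paper does not address it. Your $(A+\varepsilon)^{-1/2}$ sketch only works if the limit is taken before Tomiyama is invoked. The approximants equal $A(A+\varepsilon)^{-1}$ on $H$, which is not multiplicative. A clean repair is to write $\Phi(g)=V^*\pi(g)V$, polar-decompose $V=WA^{1/2}$, and apply Tomiyama to the unital map $W^*\pi(\cdot)W$ on $\overline{\operatorname{ran}A}$.

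Your fallback needs no normalization and is correct up to an index slip. With $a=\begin{bmatrix}1&h&g\end{bmatrix}$, the $(2,3)$ entry of $\Phi(a^*a)$ is $\Phi(h^{-1}g)$, not $\Phi(hg)$. So use $a=\begin{bmatrix}1&h^{-1}&g\end{bmatrix}$, or replace $h$ by $h^{-1}$ at the end.

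The fallback also collapses further than you noticed. The third row of that image is $\begin{bmatrix}\Phi(g^{-1})&\Phi(g^{-1}h^{-1})&A\end{bmatrix}$. Its first two entries agree by definition, because $g^{-1}h^{-1}H=g^{-1}H$. So the compression only re-derives the built-in right $H$-invariance, and self-adjointness does the real work:
\[
\varphi(hgH)=\Phi(hg)=\Phi(g^{-1}h^{-1})^*=\Phi(g^{-1})^*=\Phi(g)=\varphi(gH).
\]
Here $\Phi(x^{-1})=\Phi(x)^*$ comes from positivity of the image of $\begin{bmatrix}1&x\\ x^{-1}&1\end{bmatrix}$. The $(1-h)^*(1-h)$ computation and the $3\times 3$ matrix are therefore unnecessary.

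The paper's route fits the statement into the multiplicative-domain viewpoint it uses elsewhere, at the cost of an unjustified normalization. Yours is self-contained, and it shows that left $H$-invariance needs only that $\varphi$ be Hermitian, with no need for complete positivity or Tomiyama.
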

    \begin{proof}
        Without loss of generality, assume $\varphi(e/H)=1.$ Note that the induced completely positive map $\hat{\varphi}$ is constant on $H$ as
        $h \mapsto \varphi(h/H) = \varphi(e/H) = 1.$
        Let $h \in H, x\in G/H$ and $\hat{x}\in G$ such that
        $\hat{x} /H = x.$
        Thus, by the homomorphic Tomiyama theorem we have that \[\hat{\varphi}(h)\hat{\varphi}(\hat{x})=\hat{\varphi}(h\cdot x)\]
        and so
        \[\varphi(h\cdot x) = \hat{\varphi}(h\hat{x})=\hat{\varphi}(h)\hat{\varphi}(\hat{x})=
        \varphi(h / H)\varphi(x)=\varphi(x).\]
    \end{proof}

    We now give a representation theorem for a general completely positive definite function on $G/H.$
    \begin{theorem}\label{cpdthm}
        Let $\hilbert$ be Hilbert space.
        Let $G$ be a group and let $H$ be a subgroup of $G.$
        Let $\varphi: G/H \rightarrow \BH.$
        The map $\varphi$ is completely positive definite if and only if there exists a representation of class one $\pi$ of $G$ and an isometry $V$ such that the range of $V$ is a subspace of the $H$-invariant vectors of $\pi$ and 
            \[\varphi(g/H) = V^*\pi(g)V.\]
    \end{theorem}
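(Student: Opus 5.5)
The proof goes through the Stinespring representation theorem applied to the induced map on the group algebra, followed by the preceding Proposition (via the homomorphic Tomiyama theorem) to force the range of $V$ into the $H$-fixed vectors.

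\emph{Sufficiency.} Suppose $\varphi(g/H)=V^*\pi(g)V$, where $\pi$ is a unitary representation of $G$ on $\mathcal{K}$ and the range of $V$ lies in the $H$-invariant vectors $\mathcal{K}_e$. First, this is well defined on cosets: if $g'=gh$ with $h\in H$, then $\pi(gh)V=\pi(g)\pi(h)V=\pi(g)V$. Next, extend $\pi$ linearly to a $*$-representation of $\mathbb{C}[G]$ (with $g^*=g^{-1}\mapsto\pi(g)^*$). Then the induced map $\hat\varphi(a)=V^*\pi(a)V$ is a compression of a $*$-homomorphism. Hence $[\hat\varphi(a_{ij})]=(I\otimes V)^*[\pi(a_{ij})](I\otimes V)$ is positive whenever $[a_{ij}]$ is positive in $M_n(\mathbb{C}[G])$, since $\pi$ preserves positivity. (If one wants to avoid the completion, one can also use Lemma \ref{positivityofgrpelt} to write positives as sums $\sum a^*a$.)

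\emph{Necessity.} Suppose $\varphi$ is completely positive definite, so $\hat\varphi:\mathbb{C}[G]\to\BH$ is completely positive.
\begin{enumerate}
\item Apply Stinespring. Two routes are available. One is to run the GNS/Stinespring construction directly on $\mathbb{C}[G]\otimes\hilbert$ with the semi-inner product $\langle a\otimes\xi,b\otimes\eta\rangle=\langle\hat\varphi(b^*a)\xi,\eta\rangle$. Left multiplication by $g$ preserves this form because $(gb)^*(ga)=b^*a$, so it induces unitaries $\pi(g)$ on the completed quotient $\mathcal{K}$. The other is to pass to the completion $\overline{\mathbb{C}[G]}$, which is a $C^*$-algebra, after checking that $\hat\varphi$ is bounded there. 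I prefer the direct construction, since the problem concerns discrete data with no continuity assumptions. Either way one obtains $\hat\varphi(a)=V^*\pi(a)V$ with $V\xi=[e\otimes\xi]$.
\item Normalize $\varphi(e/H)=I$. In general $\varphi(e/H)$ is only positive, so an isometry may require a rescaling. I would assume the normalization $\varphi(e/H)=I$, as the preceding Proposition does, or else absorb $\varphi(e/H)^{1/2}$ into the statement. With this normalization $V^*V=\hat\varphi(e)=I$, so $V$ is an isometry.
\item Show that the range of $V$ lies in $\mathcal{K}_e$. The restriction of $\hat\varphi$ to $\mathbb{C}[H]$ is the unital homomorphism $h\mapsto I$, by the preceding Proposition. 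For $h\in H$ compute
\[
\|\pi(h)V\xi-V\xi\|^2=2\|\xi\|^2-2\operatorname{Re}\langle V^*\pi(h)V\xi,\xi\rangle=2\|\xi\|^2-2\operatorname{Re}\langle\hat\varphi(h)\xi,\xi\rangle=0,
\]
so $\pi(h)V\xi=V\xi$. In particular $\mathcal{K}_e\neq 0$, so $\pi$ is of class one.
\end{enumerate}

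The main obstacle is step 1: producing a genuine Stinespring dilation for a map defined only on the algebraic group algebra. Complete positivity must be shown to give a positive semidefinite sesquilinear form, and the unitaries must be shown to be bounded. Here Lemma \ref{positivityofgrpelt} and Lemma \ref{lemmaposcondinit} are what guarantee positivity of the form. Boundedness of $\pi(g)$ is automatic because each $\pi(g)$ is isometric and invertible on the dense span.
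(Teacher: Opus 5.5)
Your proof is correct. Its overall architecture matches the paper's: dilate via Stinespring, then show the range of $V$ consists of $H$-fixed vectors. The key invariance step, however, is done differently.

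The paper takes the \emph{minimal} Stinespring dilation and tests $\pi(h)Vu$ against the spanning vectors $\pi(g)Vw$. It uses right $H$-invariance, $\hat\varphi(g^{-1}h)=\hat\varphi(g^{-1})$, to get $\langle \pi(h)Vu,\pi(g)Vw\rangle=\langle Vu,\pi(g)Vw\rangle$, and concludes by density. You instead use only $\hat\varphi(h)=\hat\varphi(e)$ together with unitarity of $\pi(h)$, which is the equality case of Cauchy--Schwarz. Your argument works for \emph{any} dilation, minimal or not, so it is slightly shorter and more robust.

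Two simplifications are available to you. First, $\hat\varphi(h)=\varphi(hH)=\varphi(H)$ holds by definition, so you need neither the preceding Proposition nor the homomorphic Tomiyama theorem; your computation never uses bimodularity. Second, your norm identity goes through without any normalization if you replace $\|\xi\|^2$ by $\langle\hat\varphi(e)\xi,\xi\rangle=\|V\xi\|^2$.

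The normalization $\varphi(e/H)=I$ is therefore needed only for the word ``isometry,'' and you are right to flag it. The paper's statement tacitly assumes it: for example, $\varphi\equiv 2I$ is completely positive definite but is not of the stated form with $V$ an isometry.

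Your proposal also fills in two points the paper passes over. You verify the converse direction, which the paper omits. You also give an explicit GNS-type construction on $\mathbb{C}[G]\otimes\hilbert$, whereas the paper applies Stinespring directly to the non-$C^*$-algebra $\mathbb{C}[G]$. Your preference for the direct construction over the completion route is sound. The paper norms $\overline{\mathbb{C}[G]}$ via continuous representations only, and it admits in its concluding section that comparing this with the discrete completion is unclear, for instance for $SO(n)$.
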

    \begin{proof}
        Let $\hat{\varphi}$ be the induced completely positive map. By the Stinespring representation theorem, there is a minimal representation $\pi$ of $\mathbb{C}[G]$ such that 
        $\hat{\varphi}(g) = V^*\pi(g)V.$
        Note that $\pi(h)$ restricted to the range of $V$
        is the identity when $h \in H$ as by minimality
        vectors of the form $\pi(g)Vw$ span the space
        and, for any $u, w \in \hilbert,$ we have that
        \begin{align*}
            \langle \pi(h)Vu, \pi(g)Vw \rangle
            &= \langle V^*\pi(g)^*\pi(h)Vu, w\rangle\\
            &= \langle V^*\pi(g^{-1}h)Vu, w\rangle\\
            &= \langle \varphi(g^{-1}/H)u, w\rangle\\
            &= \langle V^*\pi(g^{-1})Vu, w\rangle\\
            &= \langle Vu, \pi(g)Vw \rangle,
        \end{align*}
        and, thus, $\pi(h)Vu = Vu$- that is, as 
        vectors of the form $Vu$ exhaust the range of $V,$
        the range of $V$ is invariant under each $\pi(h).$
    \end{proof}
    \subsection{The ind-finite sphere and consequences for representations of Class I of $SO(\infty)$}
        Christensen and Ressel established the following theorem following the work of Schoenberg.
        \begin{theorem}
            Let $\hilbert$ be an infinite dimensional Hilbert space, let $S$ be its unit sphere.
            Let $f:[-1,1]\rightarrow \mathbb{R}$ such that 
            \begin{enumerate}
                \item $f(1)=1,$
                \item $f(\langle x, y \rangle)$ defines a positive kernel $S \times S,$
            \end{enumerate}
            then $f$ is a positivity preserver and is of the form
                \[f(x) = a\chi(x=1 \vee x=-1) + bx\chi(x=1 \vee x=-1) +  \sum c_ix^i\]
            where $a,b,c_i \geq 0, a+b \sum c_i =1$ and $\chi$ is an indicator function.
        \end{theorem}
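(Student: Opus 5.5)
The plan is to avoid continuity hypotheses entirely. I would identify the compact convex set of admissible $f$, show that its extreme points are $x^n$ and the two ``jump'' functions at $\pm1$, and read off the displayed form by a Choquet-type integral representation. I read the jump terms $a\chi(x=1\vee x=-1)+bx\chi(x=1\vee x=-1)$ as an arbitrary nonnegative combination of $\chi(x=1)$ and $\chi(x=-1)$, i.e.\ $a\geq|b|$; the displayed constraint $a+b\sum c_i=1$ I take to mean $a+\sum c_i=1$, the normalization $f(1)=1$.

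\textbf{Step 1 (reduction to matrices).} Every real correlation matrix (PSD, unit diagonal) of any size is a Gram matrix of unit vectors in the infinite dimensional $\hilbert$. Hence condition (2) says exactly that $[f(m_{ij})]$ is PSD for every correlation matrix $[m_{ij}]$. This already gives that $f$ is a positivity preserver on correlation matrices. The $2\times 2$ case gives $|f(t)|\leq f(1)=1$. Applying $f$ to $[m_{ij}]\otimes[n_{kl}]$ and to $[m_{ij}]\oplus$-type block constructions shows that the set $\mathcal{P}$ of admissible $f$ is closed under Schur products. It is also closed under convex combinations and pointwise limits, so $\mathcal{P}$ is a convex set, compact in the product topology on $[-1,1]^{[-1,1]}$. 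By Krein--Milman it is the closed convex hull of its extreme points.

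\textbf{Step 2 (splitting the Hilbert space).} Write $\hilbert\cong\hilbert\oplus\hilbert$ and take $x=(\cos\alpha\, u,\sin\alpha\, v)$ and $y=(\cos\alpha\, u',\sin\alpha\, v')$ with $u,v,u',v'\in S$. Then $\langle x,y\rangle=s\langle u,u'\rangle+(1-s)\langle v,v'\rangle$ with $s=\cos^2\alpha$. So for each $s\in[0,1]$, the function $(a,b)\mapsto f(sa+(1-s)b)$ gives a positive definite kernel on $S\times S$.

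For a fixed $b$-configuration, this shows that $f(s\,\cdot+(1-s)\,\cdot)$ decomposes into products of elements of $\mathcal{P}$, in the sense of a positive definite kernel on a product of two copies of the sphere. Fixing $v=v'$ forces $\langle v,v'\rangle=1$, and then $t\mapsto f(st+(1-s))$ lies in $\mathcal{P}$ after normalization. I would then show that an extreme $f$ satisfies a multiplicativity relation $f(st+(1-s)r)=\Phi_s(t)\Psi_s(r)$, coming from the extremality of product kernels. Combined with the Schur-product closure, this forces $f$ on $(-1,1)$ to satisfy a Cauchy-type functional equation. Since $f$ is bounded by $1$, the bounded solutions are the monomials $t\mapsto t^n$, or $f\equiv0$ on $(-1,1)$. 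This is the main obstacle: making the extreme-point-to-multiplicativity argument rigorous without any measurability of $f$. My first attempt would be the Christensen--Ressel route, showing that $\mathcal{P}$ is a Bauer simplex whose extreme points are characterized by $f(\langle x\otimes x',y\otimes y'\rangle)=f(\langle x,y\rangle)f(\langle x',y'\rangle)$. Here the tensor embedding $S\otimes S\to S$ is used to produce the factorization.

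\textbf{Step 3 (the endpoints).} Suppose $f$ is extreme and vanishes on $(-1,1)$. Then only $f(1)=1$ and $f(-1)=c$ remain, and PSD-ness on the $2\times2$ correlation matrix with off-diagonal $-1$ forces $|c|\leq1$. The extreme cases $c=\pm1$ give $\chi(x=1)+\chi(x=-1)$ and $\chi(x=1)-\chi(x=-1)$. Conversely, $x^n$ is positive definite by the Schur product theorem. For the jump functions, the kernel $\chi(x=\pm y)$ on $S$ is PSD because it is block-diagonal after grouping antipodal pairs. Integrating over extreme points with a probability measure then yields the displayed form with $a,b,c_i\geq0$, normalized by $f(1)=1$; convergence of $\sum c_i x^i$ is uniform on $[-1,1]$ as in the Corollary in Section~\ref{discoscho}. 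Finally, $f$ is a positivity preserver on all of $[-1,1]$ because each summand is, the jump part by the same block-diagonal argument applied to an arbitrary correlation matrix.
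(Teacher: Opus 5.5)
The paper does not prove this statement; it quotes it from Christensen--Ressel. So your outline has to stand on its own, and it does not. Its only non-routine step, identifying the extreme points of $\mathcal{P}$, is left open, and the relation you aim for is the wrong one. An extreme $f$ does \emph{not} satisfy $f(st+(1-s)r)=\Phi_s(t)\Psi_s(r)$. For $f(x)=x^n$ the kernel $f(s\langle u,u'\rangle+(1-s)\langle v,v'\rangle)=\sum_k\binom nk s^k(1-s)^{n-k}\langle u,u'\rangle^k\langle v,v'\rangle^{n-k}$ is a genuine mixture of product kernels on $S\times S$, and extremality of $f$ in $\mathcal{P}$ says nothing about extremality there. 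In fact, if your factorization held, setting $t=1$ and $r=1$ would give $g(u+v)=g(u)g(v)$ for $g(u)=f(1-u)$ whenever $u,v\ge0$ and $u+v\le2$. That forces $f(t)=e^{\lambda(t-1)}$, a Poisson mixture of monomials, which is not extreme.

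The correct relation is $f(st)=f(s)f(t)$, and it follows in a few lines from the tensor embedding you mention only at the end. Choose $x',y'\in S$ with $\langle x',y'\rangle=s$ and evaluate the kernel at the unit vectors $x_i\otimes x'$ and $x_i\otimes y'$; this is a finite configuration in $\mathcal{H}\otimes\mathcal{H}$, which embeds isometrically into $\mathcal{H}$. You get a positive semidefinite matrix $\begin{bmatrix}A&B\\B&A\end{bmatrix}$ with $A=[f(\langle x_i,x_j\rangle)]$ and $B=[f(s\langle x_i,x_j\rangle)]$. Testing against vectors $(v,\pm v)$ gives $A\pm B\succeq0$, so $f\pm f(s\,\cdot)$ are both positive definite. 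If $|f(s)|<1$, then $f=\frac{1+f(s)}{2}\cdot\frac{f+f(s\,\cdot)}{1+f(s)}+\frac{1-f(s)}{2}\cdot\frac{f-f(s\,\cdot)}{1-f(s)}$ is a convex combination of elements of $\mathcal{P}$, so extremality forces $f(s\,\cdot)=f(s)f$. If $f(s)=\pm1$, then $f\mp f(s\,\cdot)$ is positive definite and vanishes at $1$, hence vanishes identically by the $2\times2$ bound. No measurability is used anywhere, which is exactly what your Step 2 was missing.

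Your next claim is also false as stated. The bounded solutions of $f(st)=f(s)f(t)$ with $f(1)=1$ are not only $f\equiv0$ on $(-1,1)$ (giving $\chi(x=\pm1)$ and $x\chi(x=\pm1)$, as in your Step 3). They also include $|t|^\lambda$ and $\operatorname{sgn}(t)|t|^\lambda$ for every real $\lambda\ge0$, with value $0$ at $0$ unless $f\equiv1$; this covers $\chi(t\neq0)$ and $\operatorname{sgn}t$. Integrality has to come from positive definiteness again. For $\lambda>0$ these functions are continuous on $[-1,1]$, so classical Schoenberg on $S^\infty$ forces $\lambda\in\mathbb{N}$ with matching parity. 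The functions $\chi(t\neq0)$ and $\operatorname{sgn}t$ fail on $e_1,e_2,(e_1+e_2)/\sqrt2$, where the $3\times3$ matrix has determinant $-1$.

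With that, $\operatorname{ext}\mathcal{P}$ lies in the countable compact set $E=\{x^n\}\cup\{\chi(x=\pm1),x\chi(x=\pm1)\}$, the last two being the pointwise limits of $x^{2n}$ and $x^{2n+1}$. Krein--Milman together with compactness of $E$ then writes every $f$ as the barycenter of an atomic probability measure on $E$. That is what justifies your final integration over extreme points.

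Finally, fix your reading of the statement. The condition $a,b\ge0$ is correct as written: the jump part $p\chi(x=1)+q\chi(x=-1)$ must satisfy $|q|\le p$, i.e.\ $a=\frac{p+q}2\ge0$ and $b=\frac{p-q}2\ge0$. Your ``$a\ge|b|$'' admits $\chi(x=-1)$, which is not positive definite, and excludes $\chi(x=1)-\chi(x=-1)$, which is. The normalization is $a+b+\sum c_i=1$, because $f(1)=a+b+\sum c_i$; your Step 3 actually lands on this correct version. The preserver claim must also be checked on all PSD matrices with entries in $[-1,1]$, not just correlation matrices. Your block argument covers this once you note that $|m_{ij}|=1$ forces $m_{ii}=m_{jj}=1$.
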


        Strictly speaking, we need their result for the ind-finite sphere $S^\infty$, which is slightly smaller than $S.$ However, the property that $f(\langle x, y \rangle)$ defines a positive kernel requires only that for each finite list of $x_1,\ldots, x_n \in S$
        that the matrix $[f(\langle x_i, x_j \rangle)]_{i,j}$ be positive semidefinite, and since we may choose a unitary $U$ such that $Ux_i$ is finitely supported and
        $[f(\langle x_i, x_j \rangle)]_{i,j}=[f(\langle Ux_i, Ux_j \rangle)]_{i,j},$ we see the following corollary
        \begin{corollary} \label{indfinitepd}
            Let $S^\infty$ be the ind-finite sphere.
            Let $f:[-1,1]\rightarrow\mathbb{R}$ such that 
            \begin{enumerate}
                \item $f(1)=1,$
                \item $f(\langle x, y \rangle)$ defines a positive kernel $S^\infty \times S^\infty,$
            \end{enumerate}
            then $f$ is a positivity preserver and is of the form
                \[f(x) = a\chi(x=1 \vee x=-1) + bx\chi(x=1 \vee x=-1) +  \sum c_ix^i\]
            where $a,b,c_i \geq 0, a+b \sum c_i =1$ and $\chi$ is an indicator function.
        \end{corollary}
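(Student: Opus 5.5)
The plan is to reduce Corollary \ref{indfinitepd} to the Christensen--Ressel theorem stated just above. The point is that positive definiteness of a kernel is a statement about finite configurations only, and every finite configuration on the full unit sphere $S$ of $\hilbert$ can be moved isometrically into $S^\infty$.

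First I fix notation. Embed $S^\infty = SO(\infty)/SO(\infty)_+$ into $S$ via $\gamma SO(\infty)_+ \mapsto \gamma e_1$. By construction, $S^\infty$ is then identified with the finitely supported unit vectors: the orbit of $e_1$ under $\bigcup_n SO(n)$ is exactly the set of unit vectors supported in some $\operatorname{span}\{e_1,\dots,e_n\}$, using transitivity of $SO(n)$ on $S^{n-1}$ for $n\ge 2$. Under this identification, the double coset invariant $\gamma_{11}$ becomes the inner product $\langle \gamma e_1, e_1\rangle$. So the kernel $f(\langle x,y\rangle)$ on $S^\infty$ is the restriction of the same kernel on $S$.

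The main step is to show that hypothesis (2) on $S^\infty$ implies hypothesis (2) on $S$. Let $x_1,\ldots,x_n\in S$ be arbitrary, and let $W=\operatorname{span}\{x_1,\dots,x_n\}$, which has dimension $k\le n$. Choose a unitary $U$ on $\hilbert$ mapping $W$ onto $\operatorname{span}\{e_1,\dots,e_k\}$. Such a $U$ exists because $W^\perp$ and $\operatorname{span}\{e_1,\dots,e_k\}^\perp$ are both infinite-dimensional separable spaces, so they are unitarily equivalent, and this equivalence can be glued to any isometry $W\to\operatorname{span}\{e_1,\dots,e_k\}$. (If $\hilbert$ is nonseparable, I apply the same argument inside a separable subspace containing $W$ and the $e_i$; only the inner products matter.) Then each $Ux_i$ is finitely supported, hence lies in $S^\infty$, and $\langle Ux_i,Ux_j\rangle=\langle x_i,x_j\rangle$. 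Therefore
\[
[f(\langle x_i,x_j\rangle)]_{i,j}=[f(\langle Ux_i,Ux_j\rangle)]_{i,j}\succeq 0 .
\]
Since positivity of a kernel is by definition checked on finite sets, $f(\langle x,y\rangle)$ is a positive kernel on $S\times S$, and $f(1)=1$ is unchanged.

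Finally, I apply the Christensen--Ressel theorem to $\hilbert$. This directly gives that $f$ is a positivity preserver and has the displayed form with $a,b,c_i\ge 0$ and the stated normalization. The only mildly delicate point is the construction of $U$, in particular ensuring it is a genuine unitary of $\hilbert$ rather than merely an isometry on $W$. Even that is not essential: the Gram matrix is all that enters, so an isometry from $W$ into $\operatorname{span}\{e_1,\dots,e_k\}$ suffices. I therefore expect no real obstacle beyond bookkeeping the identification of $S^\infty$ with the finitely supported unit vectors.
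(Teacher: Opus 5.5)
Your proposal is correct and follows the paper's argument: positivity of the kernel is a condition on finite configurations, and a unitary (or just an isometry on the span) moves any finite configuration in $S$ to finitely supported vectors without changing the Gram matrix, so the Christensen--Ressel theorem applies. Your extra bookkeeping, namely identifying $S^\infty$ with the finitely supported unit vectors and justifying that $U$ exists, makes explicit what the paper's one-line justification leaves implicit.
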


        Now, by Theorem \ref{cpdthm}, we see that any completely positive definite function on
        $\mathbb{C}[SO(\infty)]$ is of the form $V^*\pi(g)V,$ and hence any extreme state state corresponding to an $SO(\infty)_+$-invariant vector must be of the form $\gamma_{11}^n$
        or $\chi(\gamma_{11}=1 \vee \gamma_{11}=-1)$ or $\gamma_{11}\chi(\gamma_{11}=1 \vee \gamma_{11}=-1).$
        \begin{corollary}\label{compresthm}
            Any irreducible class one representation of $SO(\infty)$
            arises from one of the following states:
            \begin{enumerate}
                \item $\varphi(\gamma)=\gamma_{11}^n$
                \item $\varphi(\gamma)=\chi(\gamma_{11}=1 \vee \gamma_{11}=-1)$ 
                \item $\varphi(\gamma)=\gamma_{11}\chi(\gamma_{11}=1 \vee \gamma_{11}=-1)$ 
            \end{enumerate}
        \end{corollary}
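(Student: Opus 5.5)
Let $\pi$ be an irreducible class one representation of $SO(\infty)$ on $\mathcal{K}$. Fix a unit $SO(\infty)_+$-invariant vector $\xi\in\mathcal{K}$ and consider the state
\[
\varphi(\gamma)=\langle \pi(\gamma)\xi,\xi\rangle .
\]
I will show that $\varphi$ must be one of the three listed states. This suffices: since $\pi$ is irreducible, $\xi$ is cyclic, so by uniqueness of the minimal Stinespring (GNS) dilation $\pi$ is recovered from $\varphi$. Here $\varphi$ is the scalar case $\hilbert=\mathbb{C}$ of Theorem \ref{cpdthm}, with $V$ the isometry $1\mapsto\xi$.

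\emph{Step 1: $\varphi$ is a function of $\gamma_{11}$.} The function $\varphi$ is completely positive definite on $SO(\infty)/SO(\infty)_+=S^\infty$. By the Proposition preceding Theorem \ref{cpdthm} it is invariant under left multiplication by $SO(\infty)_+$, and since $\xi$ is invariant it is also right-invariant. Because each double coset is determined by $\gamma_{11}$, we can write $\varphi(\gamma)=f(\gamma_{11})$ for some $f:[-1,1]\to\mathbb{R}$ (real-valued, since $\gamma_{11}$ is unchanged under $\gamma\mapsto\gamma^{-1}=\gamma^{T}$). By Lemma \ref{lemmaposcondinit}, the matrix
\[
[\varphi(g_i^{-1}g_j)]=[f(\langle g_ie_1,g_je_1\rangle)]
\]
is positive semidefinite for every finite choice of $g_i$. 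As $g$ ranges over $SO(\infty)$, the vectors $ge_1$ exhaust $S^\infty$, so $f(\langle x,y\rangle)$ is a positive kernel on $S^\infty$ with $f(1)=1$. Corollary \ref{indfinitepd} then gives
\[
f=a\chi_{\{\pm1\}}+b\,x\chi_{\{\pm1\}}+\sum c_n x^n,\qquad a,b,c_n\ge0,\quad a+b+\sum c_n=1 .
\]

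\emph{Step 2: irreducibility forces one term.} Each summand, evaluated at $\gamma_{11}$, is itself a completely positive definite, $SO(\infty)_+$-bi-invariant state on $\mathbb{C}[SO(\infty)]$. For the monomials this holds by the Schur product theorem; for the two indicator terms it follows from Corollary \ref{indfinitepd} applied in the converse direction, or directly from the fact that the kernel $\chi(x=\pm y)$ is positive. So $\varphi$ is a convex combination of such states. If $\pi$ is irreducible and $\xi$ is the (unique up to scalar) invariant vector, then $\varphi$ should be extreme among bi-invariant states. Given that, the combination must be trivial and $\varphi$ equals one of the three listed states.

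\emph{Main obstacle.} Two points need justification. First, the invariant vector must be unique up to scalar, and pure states in the bi-invariant class must coincide with extreme points of the cone of $SO(\infty)_+$-invariant positive definite functions. I would establish this via the commutativity of the bi-invariant algebra: a Gelfand-pair-type argument on the double coset algebra indexed by $\gamma_{11}$ makes $P\pi(\cdot)P$ commutative on the invariant subspace, where $P$ is the projection onto that subspace. Second, a decomposition $\varphi=t\varphi_1+(1-t)\varphi_2$ into bi-invariant states yields, by the Radon--Nikodym theorem for completely positive maps, a positive operator $T$ in $\pi(G)'$ with $\varphi_1(\cdot)=\langle\pi(\cdot)T\xi,\xi\rangle$. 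Irreducibility forces $T$ to be scalar, hence $\varphi_1=\varphi$. This second part only needs irreducibility, so the uniqueness issue in the first part is the delicate step. I would address it with Dixmier's form of Glimm's lemma (Theorem \ref{glimm}) if the direct argument stalls.
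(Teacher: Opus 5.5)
Your proposal is correct and follows the same route as the paper. The vector state of an invariant vector is a bi-invariant positive definite function of $\gamma_{11}$ on $S^\infty$; Corollary \ref{indfinitepd} writes it as a convex combination of the listed states, and extremality selects one.

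The uniqueness of the invariant vector, which you flag as the delicate step, is not actually needed. Your Radon--Nikodym argument already shows that the vector state of any unit invariant $\xi$ is pure, hence extreme among bi-invariant states, and GNS uniqueness then recovers $\pi$ from that state.
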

        It is somewhat remarkable as we have endowed $SO(\infty)$ with the discrete topology.
        \subsection{Locally compact groups modulo subgroups with property (T)}
            We now endeavor to show that for certain kinds of homogeneous spaces, that one may approximate positive definite functions as pointwise limits of continuous functions. First, we need the following representation theoretic lemma.
            \begin{lemma}
                Let $G$ be a locally compact group and let $H$ be a countable discrete subgroup with property (T). Then, any state arising from a class I representation of $G$ with respect to $H$ is a limit of states arising from continuous class I representations of $G$ with respect to $H.$
            \end{lemma}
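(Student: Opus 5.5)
Let $\varphi$ be a state on $\mathbb{C}[G]$ coming from a class I representation $(\pi,\mathcal{K})$ with $H$-invariant unit vector $\xi$, so $\varphi(g)=\langle \pi(g)\xi,\xi\rangle$ and $\varphi$ is constant $1$ on $H$. The idea is to smooth $\pi$ by continuous functions on $G$, which yields continuous representations, and then use property (T) of $H$ to recover exactly $H$-invariant vectors. First fix an approximate identity: pick nonnegative continuous compactly supported functions $f_\alpha$ on $G$ with $\int f_\alpha=1$ and supports shrinking to $e$. Then form the continuous positive definite functions
\[
\psi_\alpha(g)=\iint f_\alpha(x)\,\overline{f_\alpha(y)}\,\varphi(x^{-1}gy)\,dx\,dy ,
\]
if $\varphi$ is measurable. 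Otherwise, replace the convolution by a limit of finite convex combinations $\sum c_ic_j\varphi(x_i^{-1}gx_j)$. Each such combination is a positive definite function on $G$, namely $\langle\pi(g)\eta,\eta\rangle$ with $\eta=\sum c_j\pi(x_j)\xi$. To get continuity, however, we must leave $\pi$ itself and pass to the GNS representation of a continuous positive definite function. For that, the plan is to use Theorem~\ref{glimm} applied to $\overline{\mathbb{C}[G]}$. By definition its norm is the supremum over continuous unitary representations, so the family $\Pi$ of continuous representations has trivial joint kernel. Hence every pure state, and by convexity every state, is a weak$^*$ limit of vector states $\varphi_\beta(g)=\langle\pi_\beta(g)\xi_\beta,\xi_\beta\rangle$ with $\pi_\beta$ continuous.

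The main step is then to make the approximants class I with respect to $H$. Since $\varphi_\beta\to\varphi$ pointwise and $\varphi\equiv1$ on $H$, the restrictions $\varphi_\beta|_H$ are positive definite functions on $H$ converging pointwise to $1$. Because $H$ has property (T), Theorem~\ref{unifT} (applied to the identity inclusion $H\subseteq H$) makes this convergence uniform on $H$. Thus $\sup_{h\in H}\|\pi_\beta(h)\xi_\beta-\xi_\beta\|\to0$, i.e.\ $\xi_\beta$ is $(H,\epsilon_\beta)$-invariant with $\epsilon_\beta\to0$.

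Now we average. Since $H$ is countable discrete, we cannot integrate over it, so we use property (T) in its spectral form. For $\epsilon_\beta$ small, the orthogonal projection $P_\beta$ onto the $\pi_\beta(H)$-fixed vectors satisfies $\|P_\beta\xi_\beta-\xi_\beta\|\le C\epsilon_\beta$, by the standard Kazhdan-pair argument: decompose $\xi_\beta$ into its fixed part and a part in the orthocomplement, which has no almost invariant vectors. The argument showing compact groups have (T) was sketched earlier; here the Kazhdan constant replaces the averaging. Set $\eta_\beta=P_\beta\xi_\beta/\|P_\beta\xi_\beta\|$, which is nonzero and $H$-invariant. Then $\tilde\varphi_\beta(g)=\langle\pi_\beta(g)\eta_\beta,\eta_\beta\rangle$ is a state from a continuous class I representation, and $|\tilde\varphi_\beta(g)-\varphi_\beta(g)|\le 4\|\eta_\beta-\xi_\beta\|\to0$ uniformly in $g$. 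Therefore $\tilde\varphi_\beta\to\varphi$ pointwise, which is the claim.

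The expected obstacle is the Glimm step. We need the weak$^*$ approximation for a possibly non-pure state, handled by approximating convex combinations with direct sums of representations in $\Pi$. We also need $\Pi$, the continuous representations, to separate points of the completed group algebra, which holds by the paper's definition of $\overline{\mathbb{C}[G]}$. If Kazhdan-constant language is unavailable, a fallback is to invoke relative property (T) directly on the representation $\pi_\beta|_H$.
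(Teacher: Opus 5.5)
Your argument is sound within the paper's framework and is essentially the paper's proof. It uses Glimm's lemma (Theorem~\ref{glimm}) in $\overline{\mathbb{C}[G]}$ to get vector states of continuous representations converging to $\varphi$, then property (T) of $H$ to make those vectors uniformly almost $H$-invariant, then a normalized $H$-fixed replacement. In fact your $P_\beta\xi_\beta$ is the same vector as the paper's minimal-norm point of the closed convex hull of the $H$-orbit. You differ only in bounding its distance with a Kazhdan constant, so nets suffice where the paper extracts sequences converging on the countable set $H\cup\Lambda$, and in also covering non-pure states via direct sums.

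As in the paper, the Glimm step tacitly requires $\varphi$ to be a state of $\overline{\mathbb{C}[G]}$, i.e.\ the class I representation must be weakly contained in continuous ones. The trivial joint kernel of $\Pi$ does not give this for an arbitrary discontinuous representation. This is exactly the question the paper leaves open at the end, namely whether $\overline{\mathbb{C}[SO(n)]}\cong\overline{\mathbb{C}[SO(n)_{disc}]}$.
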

            \begin{proof}
                As any compact subset of $H$ is finite, Dixmier's version of Glimm's lemma, given above as Theorem \ref{glimm} gives that any pure state on $G$ is a limit of states arising from continuous representations of $G.$
                Thus, for each finite subset $\Lambda$ of $G$ pick a sequence of states
                $\sigma_{\Lambda,n}(g) = v_{\Lambda,n}^*\pi_{\Lambda,n}(g)v_{\Lambda,n}$ which converge pointwise on $H\cup \Lambda.$
                As $H$ has property (T) with respect to itself such a sequence converges uniformly to $1$ on $H$ by Theorem \ref{unifT}. Hence, letting $n$ be smallest index such that $|\sigma - \sigma_{\Lambda,n}||_{H\cup \Lambda} \leq \frac{1}{|\Lambda|}$ and take
                $\tilde{v}_{\Lambda}$ to be the unique smallest norm element 
                of the closed convex hull $\mathfrak{C}_H$ of the elements $H.$ Such $\tilde{v}_{\Lambda}$ must have norm
                greater than $\sqrt{1-1/|\Lambda|}.$
                Note $h\tilde{v}_{\Lambda} \in \mathfrak{C}_H,$ and 
                $\|h\tilde{v}_{\Lambda}\|=\|\tilde{v}_{\Lambda}\|$ and thus
                $h\tilde{v}_{\Lambda} = \tilde{v}_{\Lambda}.$ So $\tilde{v}_\Lambda$
                is H-invariant and thus so is $v_\Lambda=\tilde{v}_\Lambda/\|\tilde{v}_{\Lambda}\|.$ 
                Letting $\sigma_{\Lambda} = v_{\Lambda}^*\pi_{\Lambda,n}(g)v_{\Lambda}$ and
                taking a weak$^*$ limit point with respect to finite subsets $\Lambda$ ordered by inclusion, we are done.
            \end{proof}

            Equipped with the above lemma, we see that when $G$ be a locally compact group and let $H$ be a countable discrete subgroup with property (T), pointwise approximation by continuous positive definite functions is possible.
            \begin{theorem}
                Let $G$ be a locally compact group and let $H$ be a countable discrete subgroup with property (T). Then, any positive definite function $f:G/H \rightarrow \mathbb{R}$
                is a pointwise limit of continuous positive definite functions.
            \end{theorem}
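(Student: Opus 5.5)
We would deduce the theorem from the lemma immediately preceding it, together with the structure results of Section \ref{homogeneous}. Let $f: G/H \rightarrow \mathbb{R}$ be positive definite, meaning that the induced map $\hat{f}$ on $\mathbb{C}[G]$, with $g \mapsto f(g/H)$, is completely positive. By Lemma \ref{lemmaposcondinit}, for scalar-valued maps this is the same as asking that the matrices $[f(g_i^{-1}g_j/H)]_{i,j}$ be positive semidefinite. If $f(e/H)=0$, positivity of the $2\times 2$ matrices forces $f\equiv 0$, which is continuous, so we may assume $f(e/H)=1$ after rescaling. Then $\hat{f}$ is a state on $\mathbb{C}[G]$, and by the proposition preceding Theorem \ref{cpdthm} it is bi-$H$-invariant.

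By Theorem \ref{cpdthm} we can write $f(g/H)=\langle \pi(g)v,v\rangle$, where $\pi$ is a (possibly discontinuous) class one representation of $G$ and $v$ is an $H$-invariant unit vector. Thus $f$ is a state arising from a class I representation of $G$ with respect to $H$. The preceding lemma then gives a net of states $\sigma_\Lambda(g)=\langle \pi_\Lambda(g)v_\Lambda,v_\Lambda\rangle$ with the following properties: each $\pi_\Lambda$ is a continuous unitary representation, each $v_\Lambda$ is $H$-invariant, and $\sigma_\Lambda\to \hat{f}$ pointwise on $G$.

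Because $v_\Lambda$ is $H$-invariant, each $\sigma_\Lambda$ descends to a well-defined function $f_\Lambda(g/H)=\sigma_\Lambda(g)$ on $G/H$. This function is continuous because $\pi_\Lambda$ is strongly continuous and $G\to G/H$ is a quotient map. It is positive definite because
\[
[f_\Lambda(g_i^{-1}g_j/H)]_{i,j}=[\langle \pi_\Lambda(g_j)v_\Lambda,\pi_\Lambda(g_i)v_\Lambda\rangle]_{i,j}
\]
is a Gram matrix. Pointwise convergence $f_\Lambda\to f$ on $G/H$ is inherited from pointwise convergence on $G$. If one wants a sequence rather than a net, we note that the lemma's construction indexes by finite sets $\Lambda$; for separable or countable settings one can pass to an increasing exhausting sequence of such $\Lambda$, and otherwise we read "limit" as a net limit, as in the lemma.

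The main obstacle is the step where the lemma is applied. We must make sure that the approximants it supplies really are $H$-invariant, rather than merely approximately invariant on $H$; otherwise they would not define functions on $G/H$. This is exactly where property (T) of $H$ is used, through Theorem \ref{unifT}: it upgrades pointwise convergence to $1$ on $H$ to uniform convergence, so that the averaging, or minimal-norm, vector in the closed convex hull of the $H$-orbit is nonzero and $H$-fixed. We also need to check that renormalizing the vector does not spoil the convergence on $\Lambda$. This amounts to the estimate $\|v_\Lambda-v_{\Lambda,n}\|\to 0$, which follows from $\|\tilde v_\Lambda\|^2>1-1/|\Lambda|$ together with the parallelogram law for the minimal-norm element.

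Finally, real-valuedness is not automatic, since the approximants $f_\Lambda$ may be complex-valued. We handle this by replacing each $f_\Lambda$ with $\tfrac12(f_\Lambda+\overline{f_\Lambda})$. This is still continuous, it is still positive definite because $\overline{f_\Lambda}$ comes from the conjugate representation, and it still converges pointwise to $f$ because $f$ is real.
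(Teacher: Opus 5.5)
You follow the paper's route: realize $f$ through Theorem \ref{cpdthm}, apply the lemma just before the theorem, and push the resulting $H$-invariant vector states of continuous representations down to $G/H$. The descent, the renormalization estimate and the symmetrization $\tfrac12(f_\Lambda+\overline{f_\Lambda})$ are fine.

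The step that fails is your second sentence. Lemma \ref{lemmaposcondinit} is proved only for discrete $G$, and the paper notes in Section \ref{conclusion} that this test is lost for non-discrete groups. For locally compact $G$, complete positivity of $\hat f$ on $\mathbb{C}[G]$ means that $\hat f$ extends to a positive functional on $\overline{\mathbb{C}[G]}$. This is in general strictly stronger than positive semidefiniteness of the matrices $[f(g_i^{-1}g_j/H)]_{i,j}$. That stronger property is exactly what the lemma you invoke consumes. Its only contact with continuity is Theorem \ref{glimm} applied to the continuous representations, and that can only approximate states of $\overline{\mathbb{C}[G]}$, where those representations are jointly faithful. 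So the crux sits there, not in the $H$-invariance step, which property (T) handles as you describe.

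Under the matrix-test reading the statement is actually false. Take $G=SO(3)$ and $H=\{e\}$. Choose a free subgroup on rotations $a_1,\dots,a_m$ with a uniform spectral gap (Lubotzky--Phillips--Sarnak): with $T=\frac{1}{2m}\sum_j(a_j+a_j^{-1})$, one has $\|\pi(T)\|\le 1-c$ for every nontrivial irreducible $\pi$. Let $\chi$ be the character with $\chi(a_1)=e^{i\epsilon}$ and $\chi(a_j)=1$ for $j\ge 2$. Let $f$ be the extension by zero of $\tfrac12(\chi+\overline{\chi})$. Then $f$ is real-valued and passes the matrix test. Since $\chi(T)=\overline{\chi}(T)=\lambda_0:=1-\frac{1-\cos\epsilon}{m}$, we get $\hat f(q(T))=q(\lambda_0)$ for every polynomial $q$.

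Now take a continuous positive definite $\phi$. The map $q\mapsto\phi(q(T))$ is integration against a positive measure of mass $\phi(e)$ supported in $\{1\}\cup[-1+c,1-c]$. Pointwise convergence of such $\phi$ to $f$ would force these measures to converge weak$^*$ to $\delta_{\lambda_0}$. That is impossible once $\lambda_0\in(1-c,1)$.

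So drop the appeal to Lemma \ref{lemmaposcondinit} and keep complete positivity relative to $\overline{\mathbb{C}[G]}$ as the hypothesis throughout. Handle $f(e/H)=0$ by the Cauchy--Schwarz inequality for positive functionals. Justify positivity of the approximants by noting they are vector states of continuous representations, hence states of $\overline{\mathbb{C}[G]}$; the Gram-matrix argument alone gives only the weaker property. With that change your argument is the paper's.
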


\section{Invariant partially defined completely positive definite functions on homogeneous spaces} \label{partialhomogeneous}
    \begin{definition}
        Let $G$ be a group and let $H$ be a subgroup of $G.$ Let $\Gamma \subseteq G/H.$ We say $\Gamma$ is {\bf symmetric} if whenever $g/H \in \Gamma$, then $g^{-1}/H \in \Gamma.$ We call $\Gamma$ {\bf Archimedian}
        if $e/H \in \Gamma.$

        We define $\mathbb{C}[\Gamma]\subseteq \mathbb{C}[G]$ to be the span of $g\in G$ such that $g/H \in \Gamma.$
    \end{definition}

    We now note that  $\mathbb{C}[\Gamma]$ is an operator system.
    \begin{proposition}
        Let $G$ be a group and let $H$ be a subgroup of $G.$ Let $\Gamma \subseteq G/H.$
        If $\Gamma$ is symmetric and Archimedian, then
        $\mathbb{C}[\Gamma]$ is an operator system.
    \end{proposition}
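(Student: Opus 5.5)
We need to check the two defining properties of an operator system inside the $C^*$-algebra $\overline{\mathbb{C}[G]}$ (or inside $\mathbb{C}[G]$ with its involution $g^*=g^{-1}$): that $\mathbb{C}[\Gamma]$ contains the identity and is closed under the involution. Linearity is built in, since $\mathbb{C}[\Gamma]$ is defined as a span.

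\textbf{Unit.} The identity of $\mathbb{C}[G]$ is the group element $e$. Because $\Gamma$ is Archimedian, $e/H\in\Gamma$, so $e$ is one of the spanning elements. Hence $1=e\in\mathbb{C}[\Gamma]$.

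\textbf{Closure under $*$.} Take a finite sum $x=\sum a_i g_i$ with each $g_i/H\in\Gamma$. The involution on $\mathbb{C}[G]$ is conjugate linear and satisfies $g^*=g^{-1}$, so $x^*=\sum \overline{a_i}\,g_i^{-1}$. By symmetry of $\Gamma$, each $g_i^{-1}/H\in\Gamma$, so every $g_i^{-1}$ is a spanning element and $x^*\in\mathbb{C}[\Gamma]$.

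\textbf{Closed version.} If one insists that operator systems be norm closed, take the closure of $\mathbb{C}[\Gamma]$ in $\overline{\mathbb{C}[G]}$. The involution is isometric (in any $C^*$-norm $\|x^*\|=\|x\|$), so $*$-invariance passes to the closure, and the unit is still contained in it.

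The only point needing care is the definition of the spanning set. It consists of $g\in G$ with $g/H\in\Gamma$, and symmetry is phrased in terms of representatives: whenever $g/H\in\Gamma$, also $g^{-1}/H\in\Gamma$. The $*$-closure argument applies this condition to each spanning element $g_i$ directly, so no well-definedness issue arises. There is no substantive obstacle in this proof.
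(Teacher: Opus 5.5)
Your proof is correct and follows the paper's argument: the Archimedean condition puts $e=1$ in the span, and symmetry gives $g^*=g^{-1}\in\mathbb{C}[\Gamma]$ for each spanning element $g$, which extends to finite sums by conjugate linearity. Your remark about passing to the norm closure is harmless but not needed, since the paper's definition of an operator system does not require norm closedness.
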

    \begin{proof}
        Note that the Archimedian property ensures $1 \in \mathbb{C}[\Gamma].$ Note that by the symmetric property, we have that if $g\in \mathbb{C}[\Gamma]$
        then $g^* = g^{-1} \in  \mathbb{C}[\Gamma].$
    \end{proof}

    We now describe a condition for a map from $\mathbb{C}[\Gamma]$ to be completely positive, adapting the condition from Lemma \ref{lemmaposcondinit}.
    \begin{lemma} \label{lemposcond}
        Let $G$ be a discrete group.
        Let $\Gamma \subseteq G$ such that $\Gamma=\Gamma^*, 1\in \Gamma.$
        A map $\varphi: \mathbb{C}[\Gamma] \rightarrow \mathbb{C}$
        is completely positive if and only for any $g_1, \ldots, g_N,$
        the matrix $[\varphi(g_i^{-1}g_j)]_{i,j}$
        where the entries such that $g_i^{-1}g_j \notin \Gamma$ are left unspecified
        is a partially defined positive semidefinite matrix.
        \end{lemma}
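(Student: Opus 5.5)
The plan is to prove both directions by reducing to the full group algebra, using the Arveson extension theorem in one direction and Lemma \ref{lemmaposcondinit} in the other.

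\emph{Necessity.} Suppose $\varphi:\mathbb{C}[\Gamma]\to\mathbb{C}$ is completely positive on the operator system $\mathbb{C}[\Gamma]$. Here $\mathbb{C}[\Gamma]$ sits inside the $C^*$-algebra $\overline{\mathbb{C}[G]}$; for a discrete group this is the full group $C^*$-algebra. By the Arveson extension theorem there is a completely positive $\tilde\varphi:\overline{\mathbb{C}[G]}\to\mathbb{C}$ with $\tilde\varphi|_{\mathbb{C}[\Gamma]}=\varphi$. Restricting $\tilde\varphi$ to $\mathbb{C}[G]$ and applying Lemma \ref{lemmaposcondinit} (only the easy direction: the matrix $[g_i^{-1}g_j]_{i,j}=v^*v$ with $v=[g_1\ \cdots\ g_N]$ is positive), we get that $[\tilde\varphi(g_i^{-1}g_j)]_{i,j}$ is positive semidefinite. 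Its entries with $g_i^{-1}g_j\in\Gamma$ agree with $\varphi$, so it is a positive semidefinite completion of the partially defined matrix.

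\emph{Sufficiency.} Let $M=[m_{kl}]$ be a positive semidefinite matrix with entries in $\mathbb{C}[\Gamma]$; we must show $[\varphi(m_{kl})]$ is positive semidefinite. By Lemma \ref{positivityofgrpelt}, perturbing by $\varepsilon I$ if needed so that $M$ is positive definite and then letting $\varepsilon\to 0$ (since $\varphi(1)$ is a fixed scalar), we write
\[
M=\sum_t a_t^*a_t,\qquad a_t\in M_{1n}(\mathbb{C}[G]).
\]
Collecting all group elements $g_1,\dots,g_N$ that appear, we write $a_t=[g_1\ \cdots\ g_N]A_t$ with scalar matrices $A_t$. Then $M=\sum_t A_t^*[g_i^{-1}g_j]A_t$ as a formal identity in $M_n(\mathbb{C}[G])$.

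The difficulty is that the middle matrix contains entries $g_i^{-1}g_j\notin\Gamma$, where $\varphi$ is undefined. Using the hypothesis, we choose a positive semidefinite completion $C=[c_{ij}]$ of $[\varphi(g_i^{-1}g_j)]$. We then define a linear functional $\psi$ on the span of $\{g_i^{-1}g_j\}$ by setting $\psi(g_i^{-1}g_j)=c_{ij}$, which extends $\varphi$ on the elements lying in $\Gamma$.

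This step needs consistency: the same group element may occur as $g_i^{-1}g_j$ for several pairs $(i,j)$, and a completion need not assign equal values at repeated positions. For entries in $\Gamma$ there is no issue, since they are specified by $\varphi$ and agree automatically. For entries outside $\Gamma$, I would pass from entrywise formulas to the coefficient-level identity instead. Expanding $\sum_t A_t^*[g_i^{-1}g_j]A_t$ and comparing coefficients of each group element $g$, the entries of $M$ lie in $\mathbb{C}[\Gamma]$. Hence, for every $g\notin\Gamma$, the total coefficient $\sum_t\sum_{(i,j):g_i^{-1}g_j=g}\overline{(A_t)_{ik}}(A_t)_{jl}$ vanishes for all $k,l$.

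Therefore
\[
\sum_t A_t^* C A_t = [\varphi(m_{kl})] + E,
\]
where $E$ collects the contributions of the off-$\Gamma$ completion values $c_{ij}$. We have $E=0$ if those values are constant on each class $\{(i,j):g_i^{-1}g_j=g\}$.

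I expect securing such a consistent completion to be the main obstacle. My first attempt would be to average a given completion over the symmetries that permute these classes. Failing that, I would argue by convexity: the set of completions is convex and compact after normalization, so I would apply Hahn--Banach / Arveson directly on the finite-dimensional operator system spanned by $\{g_i^{-1}g_j\}$, treating the hypothesis as positivity on the cone generated by $[g_i^{-1}g_j]$.

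Once consistency holds, $\sum_t A_t^*CA_t$ is positive semidefinite as a sum of congruences of $C$, which gives the positivity of $[\varphi(m_{kl})]$ and completes the argument.
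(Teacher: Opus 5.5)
Your necessity argument (Arveson extension to $\overline{\mathbb{C}[G]}$, then $[g_i^{-1}g_j]=v^*v\geq 0$) is correct. It supplies a direction the paper does not write out. Your sufficiency argument follows the paper's route exactly: the sum-of-squares decomposition from Lemma \ref{positivityofgrpelt}, $a_t=[g_1\ \cdots\ g_N]A_t$, and congruence of a completion $C$ of $[\varphi(g_i^{-1}g_j)]$ by the $A_t$. You have also found the one delicate point. The coefficient of $g\notin\Gamma$ in $M$ vanishes only after summing over the whole class $\{(i,j):g_i^{-1}g_j=g\}$. Hence $\sum_t A_t^*CA_t=[\varphi(m_{kl})]+E$, and $E=0$ is guaranteed only when $C$ is constant on classes. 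The paper's proof handles this with the sentence that the unspecified entries ``cancel out'', which tacitly assumes such a $C$. You do not produce one either, so the sufficiency direction is still unproved in your proposal.

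Neither of your proposed repairs closes the gap.

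\emph{Averaging.} A finite tuple $g_1,\ldots,g_N$ generally has no translation symmetries to average over. Moreover, nothing in the hypothesis for that single tuple ties together the values a completion takes at different positions of the same class.

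\emph{Hahn--Banach/Arveson.} This route is circular. Extending $\varphi$ from the finite-dimensional system requires $\varphi\ge 0$ on the positive elements of $M_n(\mathbb{C}[\Gamma])$, which is the claim. The hypothesis only gives $[\varphi(m_{kl})]+E\ge 0$.

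What is needed is to use the hypothesis on \emph{all} tuples. A Tychonoff/finite-intersection argument (completion entries are bounded by $\varphi(e)$) gives a positive semidefinite kernel $K$ on $G\times G$ with $K(x,y)=\varphi(x^{-1}y)$ whenever $x^{-1}y\in\Gamma$. Complete positivity of $\varphi$ is equivalent to being able to choose such a $K$ invariant under left translation, i.e.\ to $\varphi$ extending to a positive definite function on $G$.

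If $G$ is amenable, set $\psi(g)=m_y[K(y,yg)]$, with $m$ a right-invariant mean on $\ell^\infty(G)$. This $\psi$ is such an extension, and $[\psi(g_i^{-1}g_j)]$ is exactly the class-constant completion you want. In fact $\varphi$ is then directly the restriction of a state on $\overline{\mathbb{C}[G]}$.

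For non-amenable $G$ no such averaging is available. This includes $SO(\infty)$ with the discrete topology, where the paper applies this lemma. There the hypothesis only yields a non-invariant completion, and a genuinely different idea, or an extra hypothesis, would be needed to pass to an invariant one.
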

        \begin{proof}
            Let $M=[m_{ij}]_{i,j}$ be a positive definite matrix with entries from $\mathbb{C}[\Gamma].$
            As $M$ is positive definite, $M = \sum^T_{t=1} a_t^*a_t$ as in
            Lemma \ref{positivityofgrpelt}
            where $A$ is some rectangular matrix with entries in $\mathbb{C}[G].$ 
            Note we may simulatenously write \[a_t = \begin{bmatrix}
                g_1 & \ldots & g_N
            \end{bmatrix} A_t\]
            where $A_i$ is a matrix with complex number entries.
            Thus 
                $M = \sum A_t^*[g_i^{-1}g_j]A_t$
            and so $\varphi(M)= \sum A_t^*[\varphi(g_i^{-1}g_j)]A_t$
            where the unspecified entries cancel out.
            Complete the matrix 
                $[\varphi(g_i^{-1}g_j)]_{i,j}$
            to a positive semidefinite matrix to witness the positivity of $\varphi(M).$
        \end{proof}

    We now give a definition of a partially defined completely positive definite function.
    \begin{definition}
        Let $\hilbert$ be Hilbert space.
        Let $G$ be a group and let $H$ be a subgroup of $G.$ Let $\Gamma \subseteq G/H$ be symmetric and Archimedian.
        We say a $\varphi: \Gamma \rightarrow \BH$
        is  a {\bf partially defined completely positive definite function} if
        the induced linear map satisfying $g \mapsto \varphi(g / H)$
        on the group algebra $\mathbb{C}[\Gamma]$ is completely positive.
    \end{definition}
    Coupling the Arveson extension theorem with our characterization of completely positive definite functions gives a characterization of partially defined completely positive definite functions.
    \begin{theorem}\label{liftpd}
        Let $\hilbert$ be Hilbert space.
        Let $G$ be a group and let $H$ be a subgroup of $G.$ Let $\Gamma$ be symmetric and Archimedian.
        Let $\varphi: \Gamma \rightarrow \BH.$
        The map $\varphi$ is a completely positive definite function if and only if there exists a representation of class one $\pi$ of $G$ and an isometry $V$ such that the range of $V$ is a subspace of the $H$-invariant vectors of $\pi$ and 
            \[\varphi(g/H) = V^*\pi(g)V.\]
    \end{theorem}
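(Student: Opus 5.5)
The plan is to reduce the statement to Theorem \ref{cpdthm} by extending the partially defined function with the Arveson extension theorem. The statement should be read as saying that $\varphi$ is a \emph{partially defined} completely positive definite function if and only if it has the stated dilation on $\Gamma$.

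The ``if'' direction is immediate. Suppose $\varphi(g/H)=V^*\pi(g)V$ for $g/H\in\Gamma$, where $\pi$ is a class one representation and $V$ is an isometry into the $H$-invariant vectors. Then $g\mapsto V^*\pi(g)V$ extends linearly to all of $\mathbb{C}[G]$. It is completely positive there, since it is the compression of a $*$-representation. By Theorem \ref{cpdthm} (or directly), it is well defined on cosets: $V^*\pi(gh)V=V^*\pi(g)\pi(h)V=V^*\pi(g)V$, because $\pi(h)V=V$. Restricting this completely positive map to the operator system $\mathbb{C}[\Gamma]$ gives exactly the induced map of $\varphi$. Restrictions of completely positive maps to operator systems are completely positive, so $\varphi$ is partially defined completely positive definite.

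For the ``only if'' direction I would proceed in four steps.

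\textbf{Step 1.} Since $\Gamma$ is symmetric and Archimedian, the proposition above shows that $\mathbb{C}[\Gamma]$ is an operator system inside $\mathbb{C}[G]$. To apply Arveson we need an ambient $C^*$-algebra, so we view $\mathbb{C}[\Gamma]$ inside the completed group algebra $\overline{\mathbb{C}[G]}$.

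\textbf{Step 2.} The induced map $\hat\varphi:\mathbb{C}[\Gamma]\to\BH$ is completely positive by hypothesis. We need it to be completely positive, and in particular bounded, with respect to the $C^*$-order inherited from $\overline{\mathbb{C}[G]}$. Via Lemma \ref{lemposcond} and Lemma \ref{positivityofgrpelt}, positivity in the operator system is witnessed by sums of squares $\sum a_t^*a_t$ with $a_t$ having entries in $\mathbb{C}[G]$. Complete positivity then gives the bound $\|\hat\varphi\|_{cb}=\|\hat\varphi(1)\|$, so $\hat\varphi$ extends continuously to the closure of $\mathbb{C}[\Gamma]$.

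\textbf{Step 3.} Arveson's extension theorem gives a completely positive $\tilde\varphi:\overline{\mathbb{C}[G]}\to\BH$ extending $\hat\varphi$. Its restriction to the group elements $g\mapsto\tilde\varphi(g)$ defines a completely positive map on $\mathbb{C}[G]$.

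\textbf{Step 4.} We must check that $\tilde\varphi$ descends to a function on $G/H$. Since $e/H\in\Gamma$, every $h\in H$ lies in $\mathbb{C}[\Gamma]$, and $\tilde\varphi(h)=\varphi(e/H)$. After normalizing $\varphi(e/H)=1$ (the general case follows by the usual compression with $\varphi(e/H)^{1/2}$, or by running Stinespring directly), $\tilde\varphi$ is a unital homomorphism on $\mathbb{C}[H]$. The homomorphic Tomiyama theorem then gives $\tilde\varphi(gh)=\tilde\varphi(g)\tilde\varphi(h)=\tilde\varphi(g)$. Hence $\tilde\varphi$ is a completely positive definite function on $G/H$ agreeing with $\varphi$ on $\Gamma$. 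Theorem \ref{cpdthm} now supplies the class one $\pi$ and the isometry $V$ into the $H$-invariant vectors with $\varphi(g/H)=V^*\pi(g)V$ on $\Gamma$.

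\textbf{Main obstacle.} The delicate point is Step 2. The operator system structure on $\mathbb{C}[\Gamma]$ in the definition is given by positivity of matrices over $\mathbb{C}[\Gamma]$, and this must coincide with the structure inherited from the $C^*$-completion so that Arveson applies. I would handle this using the Positivstellensatz (Lemma \ref{positivityofgrpelt}) together with an $\varepsilon$-perturbation, where $\varepsilon\cdot 1\in\mathbb{C}[\Gamma]$ by the Archimedian property. The non-normalized case, where $\varphi(e/H)$ is not invertible, is a minor technicality: I would either handle it by adding $\varepsilon I$ and taking limits, or avoid Tomiyama there by using the Stinespring minimality argument from Theorem \ref{cpdthm}.
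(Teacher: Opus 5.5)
Your proposal is correct and follows the paper's proof, which is simply to extend the induced map on $\mathbb{C}[\Gamma]$ to $\mathbb{C}[G]$ by the Arveson extension theorem and then invoke Theorem \ref{cpdthm}. Your converse direction and your Step 4 fill in details the paper leaves implicit; in Step 4 you use the homomorphic Tomiyama theorem to check that the extension is constant on cosets of $H$, which is what lets Theorem \ref{cpdthm} actually apply.

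One small correction concerns normalization. Since $\varphi(e/H)=V^*V$, the word ``isometry'' forces $\varphi(e/H)=I$. In the non-normalized case, compressing by $\varphi(e/H)^{1/2}$ only produces a bounded $V$ with $V^*V=\varphi(e/H)$, not an isometry. So the statement, like the paper, should be read under that normalization, not as a technicality to be removed.
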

    \begin{proof}
        Let $\hat{\varphi}$ be the induced completely positive map. By the Arveson extension theorem,
        $\hat{\varphi}$ extends to all of $\mathbb{C}[G]$
        and thus by Theorem \ref{cpdthm} we are done.
    \end{proof}
    \subsection{Interpolation for $SO(n)$ on finite sets}
    We now turn our attention to the following problem.
    Let $X \subseteq [-1,1]$ be a finite set containing $1.$ Let $f: X \rightarrow \BH.$
    When does $\hat{f}(\gamma)=f(\gamma_{11})$ define a 
    partially defined completely positive definite function?

    \begin{lemma} \label{convcomb}
        Any $P^{(n)}_iP^{(n)}_j$ is in the cone generated $P^{(n)}_k$ for $k\leq i+j.$
    \end{lemma}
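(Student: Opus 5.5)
Since $P^{(n)}_k(\langle x,y\rangle)=\langle w_{k,x},w_{k,y}\rangle$, each $P^{(n)}_k$ is the Gram kernel of the reproducing kernels of $W_k$. The plan is therefore to realize the product $P^{(n)}_iP^{(n)}_j$ as a Gram kernel as well, and then to read off its Gegenbauer expansion.

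First I will show that the product kernel is positive definite on the sphere. Consider the tensor product representation $W_i\otimes W_j$ of $SO(n)$ and the vectors $u_x=w_{i,x}\otimes w_{j,x}$. Then
\[
\langle u_x,u_y\rangle=\langle w_{i,x},w_{i,y}\rangle\langle w_{j,x},w_{j,y}\rangle=P^{(n)}_i(\langle x,y\rangle)P^{(n)}_j(\langle x,y\rangle).
\]
So $P^{(n)}_iP^{(n)}_j$ is a continuous positive definite function on $S^{n-1}$; equivalently, this follows from the Schur product theorem.

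Next I will apply Theorem \ref{schoclassic}. It gives $P^{(n)}_iP^{(n)}_j=\sum_k c_kP^{(n)}_k$ with $c_k\ge 0$. (The normalization of $P^{(n)}_k$ differs from $C^{((n-2)/2)}_k$ only by a positive constant, so nonnegativity of the coefficients is unaffected.) Representation-theoretically this is the statement that the $SO(n-1)$-fixed vector $u_{e_1}$ decomposes along the isotypic components of $W_i\otimes W_j$. Its component in each copy of $W_k$ is fixed by $SO(n-1)$, hence a multiple of $w_{k,e_1}$, and this again gives a nonnegative coefficient.

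Finally, the sum is finite with $k\le i+j$. The product is a polynomial of degree $i+j$, and the $P^{(n)}_k$ are orthogonal with respect to $(1-t^2)^{(n-3)/2}dt$, each of exact degree $k$. Hence the coefficient $c_k$, computed as the normalized inner product of the product with $P^{(n)}_k$, vanishes for $k>i+j$. This places $P^{(n)}_iP^{(n)}_j$ in the cone generated by $P^{(n)}_k$ for $k\le i+j$.

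The only step needing care is the uniqueness of the expansion: the Schoenberg coefficients must coincide with the orthogonal-projection coefficients. This follows from the orthogonality of the $P^{(n)}_k$ in $L^2$ of the weight, because the Schoenberg series converges uniformly (its sum at $t=1$ is finite). With that settled, the argument is a direct combination of the Schur product theorem, Theorem \ref{schoclassic}, and degree counting.
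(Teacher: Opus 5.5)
Your proof is correct and follows the same route as the paper. Positivity of the product comes from the Schur product theorem (your tensor-product vectors $u_x=w_{i,x}\otimes w_{j,x}$ are just a concrete realization of that fact), nonnegative Gegenbauer coefficients come from Theorem \ref{schoclassic}, and the coefficients with $k>i+j$ vanish by orthogonality; your check that the Schoenberg coefficients coincide with the orthogonal-projection coefficients (uniform convergence, using $|P^{(n)}_k(t)|\le P^{(n)}_k(1)$) spells out a step the paper leaves implicit.
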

    \begin{proof}
        Each $P^{(n)}_i$ has $P^{(n)}_i(\gamma_{11})$ is positive definite on $SO(n)/SO(n-1).$ 
        Note
        $P^{(n)}_i(\gamma_{11})P^{(n)}_j(\gamma_{11})$
        is a positive definite function by the Schur product theorem. 
        Thus by Schoenberg's theorem it must be a positive linear combination of $P^{(n)}_k(\gamma_{11}).$
        Note that if 
        where $k> i+j,$ $P^{(n)}_iP^{(n)}_j \perp P^{(n)}_k,$
        so we are done.
    \end{proof}

    First we desire to prove the following theorem.
    \begin{lemma}\label{hulllemma}
        Let $X \subseteq [-1,1]$ be a finite set containing $1.$
        Fix $n \geq 3.$
        Let $\hat{P}_i = \frac{P^{(n)}_i}{P^{(n)}_i(1)}|_{X}$
        There exists an  $N>0$ depending on $X$ such that any
        $\hat{P}_i$
        is in the convex hull of $\hat{P}_0, \ldots, \hat{P}_N.$
    \end{lemma}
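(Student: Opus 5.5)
The plan is to compare every $\hat P_i$ for large $i$ with an explicit limiting vector, and then show that this limit lies robustly inside the convex hull of finitely many $\hat P_k$.

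\emph{Step 1: the limits.} I regard each $\hat P_i$ as a vector in $\mathbb{R}^X$ with $\hat P_i(1)=1$. Write $X'=X\setminus\{1,-1\}$. For $x=\cos\theta_x\in X'$ with $\theta_x\in(0,\pi)$, Theorem \ref{darbouxest} gives $P^{(n)}_i(x)=O(i^{-1/2})$. Szeg\H{o}'s asymptotic gives $P^{(n)}_i(1)\sim c\,i^{n-3}$, in the normalization where Gegenbauers are Jacobi polynomials with $\alpha=\beta=(n-3)/2$. Hence $\hat P_i(x)\to 0$, at a rate $r_i\to 0$ of order $i^{-(n-2)/2}$. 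If $-1\in X$, parity gives $\hat P_i(-1)=(-1)^i$. So the even-indexed vectors converge to $e_+$ and the odd-indexed ones to $e_-$, where $e_\pm$ equals $1$ at $1$, $\pm1$ at $-1$, and $0$ on $X'$. If $-1\notin X$ there is a single limit $e$. I treat the two parity classes separately, working in the affine plane $A_\pm=\{v: v(1)=1,\ v(-1)=\pm1\}$, which is a translate of $\mathbb{R}^{X'}$.

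\emph{Step 2: an interior point.} The goal is to find finitely many indices of the right parity such that $e_\pm$ lies in the relative interior of the convex hull of the corresponding $\hat P_k$ inside the affine subspace they span. Granting this, the conclusion follows. The deviations $\hat P_i-e_\pm$ have the asymptotic form $r_i\,(k(\theta_x)\cos(N_i\theta_x+\gamma))_{x\in X'}+O(r_i/i)$. A relative neighbourhood of $e_\pm$ of fixed size then contains all but finitely many $\hat P_i$ of that parity, provided the $\hat P_i$ lie in, or within $o(r_i)$ of, the relevant subspace. The finitely many exceptional $\hat P_i$ are simply added to the generating list. Taking $N$ to be the largest index used completes the proof.

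\emph{Step 3 (main obstacle): producing the interior point.} I would consider the orbit $\{(N_i\theta_x)_{x\in X'} \bmod 2\pi\}$, with $i$ restricted to a parity class. Its closure $T$ is a coset of a closed subgroup of the torus, so it carries a translation-invariant probability measure. Each coordinate $\cos(\,\cdot\,+\gamma)$ averages to zero over $T$, since $\theta_x\notin\pi\mathbb{Z}$. It follows that $0$ lies in the relative interior of the convex hull of the compact direction set $D=\{(k(\theta_x)\cos(t_x+\gamma))_x : t\in T\}$. Both the span of $D$ and the relative interior are taken with respect to the smallest subspace containing all the deviations.

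Choose finitely many directions in $D$ whose convex hull contains a relative ball around $0$. Then pick large indices $i$ realising these directions up to small error; after rescaling by $r_i$, the corresponding $\hat P_i$ surround $e_\pm$. To make the ``relative'' bookkeeping honest, I would use Lemma \ref{convcomb}, normalised by evaluation at $1$. This shows that $\hat P_a\hat P_b$ is a convex combination of $\hat P_k$ with $k\le a+b$. With it, one can replace the limiting procedure by explicit convex combinations and absorb the $O(r_i/i)$ errors.

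The delicate point is exactly this: showing that the error terms do not push the $\hat P_i$ outside the hull in directions that $D$ fails to span. This is where I expect the real work. Should the direct argument fail, the fallback is the Cesàro-type averages $\frac1M\sum \hat P_{i}$ over a parity class, which converge to $e_\pm$ and lie in the hull automatically.
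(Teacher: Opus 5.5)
\textbf{This is a genuine gap, at exactly the step you flag as delicate.}

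Your route differs from the paper's. The paper works in the even/odd subspace from the start. It uses Lemma~\ref{convcomb}, plus a sign change of some $\hat P_{i_x}$ at each $x\in X\cap[0,1)$ (from interlacing and density of zeros), to build products lying in the hull of finitely many $\hat P_k$. These products vanish at all but one such point and take either sign there, so a full neighbourhood of the limit is exhibited directly. You instead want $e_\pm$ to be a strictly positive barycenter of the leading-order Darboux directions. That can work, but the span question is a real missing argument, not bookkeeping.

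Why it matters:
\begin{itemize}
\item The Haar average gives a relative ball around $0$ only inside $\operatorname{span}(D)$, so it controls only points of $e_\pm+\operatorname{span}(D)$.
\item To place all but finitely many $\hat P_i$ in the hull of your chosen $\hat P_{i_j}$, you need every $\hat P_i-e_\pm$ to lie \emph{exactly} in $\operatorname{span}(D)$. Being within $o(r_i)$ of it, as you allow, is useless: a point off the affine span of a finite set is never in its convex hull. You only assert the identification (``taken with respect to the smallest subspace containing all the deviations'').
\item Your ambient plane $A_\pm$ is the wrong one. If $x,-x\in X'$ then $\hat P_i(-x)=(-1)^i\hat P_i(x)$, and if $0\in X'$ then $\hat P_i(0)=0$ for odd $i$. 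So a parity class never fills $A_\pm$.
\item Neither fallback helps. You do not say how products from Lemma~\ref{convcomb} would absorb the errors. Ces\`aro averages only place $e_\pm$ in the closure of $\bigcup_M\operatorname{conv}(\hat P_0,\dots,\hat P_M)$, which the $\hat P_i$ already do; they do not make $e_\pm$ a relative interior point of one fixed finite hull.
\end{itemize}

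\textbf{How to close it.} Fix the class $i\equiv p\pmod 2$ and set $W_p:=\{v\in\mathbb{R}^{X'}: v(-x)=(-1)^p v(x)\text{ whenever }x,-x\in X'\}$. Every $\hat P_i-e_\pm$ in this class lies exactly in $W_p$, so it suffices to prove $\operatorname{span}(D)=W_p$.

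First, $D\subseteq W_p$: each point of $D$ is a limit of vectors $(\hat P_i-e_\pm)/r_i\in W_p$ with $i\to\infty$.

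Second, let $R\subseteq X'$ contain one point of each set $\{x,-x\}\cap X'$, omitting $0$ when $p$ is odd. Projection onto $\mathbb{R}^R$ is an isomorphism on $W_p$.
\begin{itemize}
\item Writing $i=2m+p$, the $x$-coordinate of the leading term is $m\mapsto k(\theta_x)\cos(2m\theta_x+\phi_x)$ for some phase $\phi_x$. This is a combination of $e^{\pm2\mathrm{i}m\theta_x}$.
\item This sequence is nonzero. For $x\neq0$ this holds because $2\theta_x\notin\pi\mathbb{Z}$. For $x=0$ and $p$ even it equals $k(\pi/2)(-1)^m$, using Szeg\H{o}'s phase $\gamma=-(\alpha+\tfrac12)\tfrac{\pi}{2}$.
\item For distinct $x,y\in R$ we have $\theta_x\neq\theta_y$ and $\theta_x+\theta_y\neq\pi$. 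Hence the frequency sets $\{e^{\pm2\mathrm{i}\theta_x}\}$ and $\{e^{\pm2\mathrm{i}\theta_y}\}$ are disjoint.
\item The sequences $m\mapsto w^m$ for distinct $w$ are linearly independent on any tail. So no nonzero functional on $\mathbb{R}^R$ vanishes on the projection of $D$.
\end{itemize}
Hence $\operatorname{span}(D)=W_p$. The $O(1/i)$ perturbations then stay inside $\operatorname{span}(D)$, and rescaling the chosen directions by the positive numbers $r_{i_j}$ keeps $0$ in the relative interior. With that, your Steps~2--3 go through as intended.
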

    \begin{proof}
        Without loss of generality, assume $X = -X.$ Namely,
        we are assuming $-1 \in X.$ 

        Because of $\hat{P}_i$ is an odd function for odd $i$ and an even function for even $i$
        it must any odd $\hat{P}_i$ must be in the convex hull of odd $\hat{P}_j$, and similarly for even ones. Also note that for any $i,j$
        $\hat{P}_i\hat{P}_j$ is a convex combination of some $\hat{P}_k$ where $k \leq i+j$ by Lemma \ref{convcomb}.
        Note that by Darboux's estimate given as Theorem \ref{darbouxest}, $\lim \hat{P}^(i)(x) = 0$ for $x \neq \pm -1.$
        Thus it is sufficient to show that for each $x\in X$ such that $x>0$ there exist
        functions $g_x$ which are each
        a convex combinations  of products of the form
        $\hat{P}_{i_1}\ldots \hat{P}_{i_n}$ where $\sum i_n$ is even such that 
        $g_x$ is negative at $x$ and $0$ at each $y \in X \cap [0,1)$ such that $y\neq x,$
        as then $g_{x}^2$ is even and positive at $x,$ and any even function small enough away from $\{-1,1\}$ will be in the hull of these, and similarly, for odd functions they will be in the hull of $\hat{P}_1g_x$ and $\hat{P}_1g_x^2.$

        Note that for even $i$, the family $P^{(n)}_i(\sqrt{t})$ is itself a family of orthogonal polynomials. Thus, the interlacing condition and density conditions given in Theorem \ref{density} and Theorem \ref{interlacing} implies that for each $x\in X\cap[0,1)$ there exists an even $i_x$ such that $\hat{P}_{i_x}(x)<0.$ (Note that the choice of $i_x$ can be taken to only depend on the distance of $x$ from $1.$)
        Now define
                \[g_x = h_{x}\prod_{y\neq x, y\in X \cap [0,1)} \left(\frac{|e_{x,y}(y)h_y(y)|+e_{x,y}h_y}{|e_{x,y}(y)h_y(y)|+1}\right)^{2}\]
        where $e_{x,y} = 1$ if $h_y(x)\neq h_y(y)$ and $e_{x,y} = \hat{P}_1^2$ otherwise
        and we are done.

    \end{proof}

    We now show that any interpolation problem on some fixed finite set $X$ is solvable in terms 
    of fixed finite family of Gegenbauer polynomials.
    \begin{theorem} \label{finitegegentheorem}
        Let $X$ be a finite subset of $[-1,1]$ containing $1.$
        There exists an  $N>0$ depending on $X$ such that for
        any $f: X \rightarrow \BH$ such that 
            $\hat{f}(\gamma)= f(\gamma_{11})$
        a partially defined
        completely positive definite function on $SO(n)$ which extends to a continuous
        function completely positive definite function on $[-1,1]$ is in the cone generated by $P^{(n)}_0, \ldots, P^{(n)}_N.$
    \end{theorem}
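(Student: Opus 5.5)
The plan is to reduce the statement to the scalar Schoenberg theorem and then apply Lemma \ref{hulllemma} coefficientwise. Fix $N$ as given by Lemma \ref{hulllemma} for the set $X$; it depends only on $X$ and the fixed $n$. Let $f$ be as in the statement, and let $F:[-1,1]\to\BH$ be a continuous completely positive definite extension of $f$.

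First I would write $F$ in Gegenbauer form. For each unit vector $u\in\hilbert$, the scalar function $t\mapsto\langle F(t)u,u\rangle$ is continuous and positive definite on $S^{n-1}$. By Schoenberg's theorem (Theorem \ref{schoclassic}) it equals $\sum_k c_k(u)P^{(n)}_k$ with $c_k(u)\ge 0$. The coefficients are given by orthogonality, $c_k(u)=\langle A_k u,u\rangle$, where
\[
A_k=\frac{1}{\|P^{(n)}_k\|^2}\int_{-1}^1 F(t)P^{(n)}_k(t)(1-t^2)^{(n-3)/2}\,dt
\]
is a Bochner or weak integral. Hence each $A_k\ge 0$, and
\[
F=\sum_k A_kP^{(n)}_k=\sum_k B_k\hat P_k, \qquad B_k=P^{(n)}_k(1)A_k\ge 0 .
\]
The series converges weakly at each point, and $\sum_k B_k=F(1)$ converges in the strong/weak sense because $\sum_k \langle B_k u,u\rangle=\langle F(1)u,u\rangle<\infty$.

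Second, restrict this expansion to $X$. By Lemma \ref{hulllemma}, for every $i$ there are convex weights $\lambda_{i,0},\dots,\lambda_{i,N}\ge 0$ summing to $1$ with $\hat P_i|_X=\sum_{m\le N}\lambda_{i,m}\hat P_m|_X$. Substituting gives, on $X$,
\[
f=\sum_{m=0}^N C_m\hat P_m, \qquad C_m=\sum_i \lambda_{i,m}B_i .
\]
The series defining $C_m$ converges because $0\le \sum_i\lambda_{i,m}B_i\le\sum_i B_i=F(1)$ as a monotone sum of positive operators. It therefore converges strongly to a positive operator $C_m$. The interchange of sums is justified weakly, since all terms are positive and $|\hat P_m|\le 1$, so the weak partial sums are dominated by $\langle F(1)u,u\rangle$. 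Thus $f$ restricted to $X$ lies in the (operator-coefficient) cone generated by $P^{(n)}_0,\dots,P^{(n)}_N$, with the coefficient of $P^{(n)}_m$ equal to $C_m/P^{(n)}_m(1)\ge 0$.

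The main obstacle is Lemma \ref{hulllemma} itself, specifically the uniformity of $N$ in $i$. That is already available to us, so the remaining care lies in the operator-valued Schoenberg step. There one must make sure the Gegenbauer coefficients are positive operators and that the summation converges appropriately. I would handle both points via the scalar compressions $\langle\cdot\,u,u\rangle$ and polarization, which avoids needing any norm convergence.
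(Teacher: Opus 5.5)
Your proposal is correct and follows essentially the same route as the paper: you apply the scalar Schoenberg theorem to the compressions $\langle F(\cdot)u,u\rangle$ to get positive operator Gegenbauer coefficients, then use Lemma \ref{hulllemma} to rewrite each $\hat P_i|_X$ in terms of $\hat P_0,\dots,\hat P_N$. You also supply details the paper leaves implicit: the convergence and interchange-of-sums arguments, and an explicit orthogonality formula for the coefficients in place of the paper's appeal to uniqueness of the minimal Stinespring dilation.
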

    \begin{proof}
        Without loss of generality, $f(1)$ is the identity.
        Any such function with such a continuous extension must
        have continuous extension of the form
        $\sum A_iP^{(n)}_i(x)$ where $\sum A_i$ converges in the weak operator topology to the identity and each $A_i$ is positive semidefinite as for each vector $v \in \hilbert,$
        $\langle f v,v\rangle$ satisfies Theorem \ref{schoclassic}, and the minimal Stinespring representation arising in Theorem \ref{cpdthm} is unique.
        By Lemma \ref{hulllemma}, we may then rewrite each $P^{(n)}_j$ as a positive combination of 
        $P^{(n)}_0, \ldots, P^{(n)}_N$ to obtain a function that agrees at the interpolation points, so we are done.
    \end{proof}




    \section{Constrained angle codes and Delsarte bounds} \label{constrainedanglesection}
    Let $X \subseteq [-1,1).$ We define the {\bf Delsarte constant} $g_X$
    to be the maximum $\bar{g}\geq 0$ such that there exists
    $g: [-1,1]\rightarrow \mathbb{R}$ such that $g-\bar{g}$ positive definite on $S^{n-1}$
    such that $g|_X \leq 0$ and $g(1)=1.$
    We say $\mathcal{C}$ is a 
    {\bf constrained angle code with cosines in $X$} if all pairs of vectors in $\mathcal{C}$
    have the cosine of their angles in $X.$
    (Such codes were considered in \cite{DGS}.)

    We see the following via the same argument as the original Delsarte estimate.
    \begin{theorem} 
        Let $X \subseteq [-1,1).$ Let $\mathcal{C}$ be a constrained angle code with cosines
        in $X.$ Then,
            $|\mathcal{C}|\leq \frac{1}{g_X}.$
    \end{theorem}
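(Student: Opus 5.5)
We follow the proof of Theorem \ref{delsartegs} essentially verbatim, replacing the sign condition on $[-1,\cos\theta]$ by the sign condition on $X$. If $g_X=0$ there is nothing to prove, so assume $g_X>0$. Since $g_X$ is a maximum, there is $g:[-1,1]\to\mathbb{R}$ with $g(1)=1$, $g|_X\le 0$, and $g-g_X$ positive definite on $S^{n-1}$.

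\textbf{Step 1 (quadratic form).} Let $\mathcal{C}=\{x_1,\dots,x_m\}$ be a constrained angle code with cosines in $X$. Apply the positive definiteness of $g-g_X$ to the finite set $x_1,\dots,x_m\in S^{n-1}$, testing the resulting positive semidefinite matrix against the all-ones vector. This gives
\[
\sum_{i,j}\bigl(g(\langle x_i,x_j\rangle)-g_X\bigr)\ge 0,
\qquad\text{that is,}\qquad
\sum_{i,j} g(\langle x_i,x_j\rangle)\ge m^2 g_X .
\]
This replaces the measure $\nu=\sum\delta_x-|\mathcal{C}|\mu$ used in the proof of Theorem \ref{delsartegs}. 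There, the average $\bar g$ entered through the $\mu$-terms. Here the constant $g_X$ subtracted from $g$ plays that role directly, so no integration against $\mu$ is needed and only pointwise positive definiteness on a finite set is used.

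\textbf{Step 2 (diagonal versus off-diagonal terms).} For $i\neq j$ we have $\langle x_i,x_j\rangle\in X$, so $g(\langle x_i,x_j\rangle)\le 0$. The diagonal terms each equal $g(1)=1$. Hence the left-hand sum is at most $m$. Combining with Step 1 gives $m^2 g_X\le m$, that is, $m\le 1/g_X$.

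The only point needing care is interpretive. One must check that ``$g-\bar g$ positive definite on $S^{n-1}$'' is meant in the kernel sense of the introduction, namely that the matrices $[f(\langle x_i,x_j\rangle)]$ are positive semidefinite for every finite list of points. This is exactly what Step 1 uses. It involves no continuity of $g$ and no use of Schoenberg's theorem, so the argument is otherwise routine.

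One should also note that $1\notin X$ is essential: the diagonal entries are evaluated at $1$, where the normalization $g(1)=1$ is used, rather than on $X$, where $g\le 0$.
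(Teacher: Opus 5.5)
Your proof is correct and is essentially the argument the paper intends. The paper defers to the classical Delsarte/Cohn--Zhao proof, and testing the positive semidefinite matrix $[g(\langle x_i,x_j\rangle)-g_X]_{i,j}$ against the all-ones vector gives exactly the inequality $\sum_{x,y\in\mathcal{C}}g(\langle x,y\rangle)\ge|\mathcal{C}|^2g_X$ that the classical proof extracts from the signed measure $\nu$; the diagonal/off-diagonal comparison afterwards is identical. Your finite-sum version actually fits the paper's definition of $g_X$ better, since that definition imposes no continuity or measurability on $g$, so the integration against $\mu$ is not directly available. The one implicit step, that $\mathcal{C}$ is finite, follows by applying your bound to every finite subcode.
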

    We leave the proof to the reader as it is exactly the same as the classical one presented above.
    (See proof of Theorem \ref{delsartegs}.)

    We say a set $X$ is {\bf Delsarte sharp} if there exists a constrained angle code such that $|\mathcal{C}| = \frac{1}{g_X}.$
    \begin{theorem}\label{firstsharptheorem}
        Let $X \subseteq (-1,1)$ be Delsarte sharp. Any function $g$
        such that $g-g_X$ is positive definite, $g|_X\leq 0$ and $g(1)=1$ is a polynomial.
    \end{theorem}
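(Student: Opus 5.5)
The plan is to run the Delsarte argument (the proof of Theorem \ref{delsartegs}) on a code attaining the bound, and read off the equality cases. Fix a constrained angle code $\mathcal{C}$ with cosines in $X$ and $|\mathcal{C}| = 1/g_X$. Fix also a witness $g$ with $g - g_X$ positive definite on $S^{n-1}$, $g|_X \le 0$ and $g(1)=1$. I work with $n \ge 3$, which is the regime where the Gegenbauer normalization of Theorem \ref{schoclassic} and the Darboux estimate of Theorem \ref{darbouxest} apply.

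First, expand $g - g_X$ in Gegenbauer polynomials. By Theorem \ref{schoclassic}, or more generally via Theorem \ref{cpdthm} decomposed into the irreducibles $W_k$ if one does not want to assume continuity, write
\[
g - g_X = \sum_{k\ge 0} a_k P^{(n)}_k, \qquad a_k \ge 0, \qquad \sum_k a_k P^{(n)}_k(1) = 1 - g_X < \infty .
\]
Second, chain the two Delsarte inequalities:
\[
g_X |\mathcal{C}|^2 \;\le\; \sum_{x,y\in\mathcal{C}} g(\langle x,y\rangle) \;\le\; |\mathcal{C}|\, g(1) = |\mathcal{C}|.
\]
The first inequality holds because $g-g_X$ is positive definite. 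The second holds because $g(\langle x,y\rangle)\le 0$ for $x\ne y$, since those cosines lie in $X$. Since $|\mathcal{C}| = 1/g_X$, both inequalities are equalities. Equality in the first gives
\[
\sum_k a_k \sum_{x,y\in\mathcal{C}} P^{(n)}_k(\langle x,y\rangle) = 0 .
\]
By the reproducing kernel identity $P^{(n)}_k(\langle x,y\rangle)=\langle w_{k,x},w_{k,y}\rangle$, the inner sum equals $\bigl\|\sum_{x\in\mathcal{C}} w_{k,x}\bigr\|^2 \ge 0$. Since every $a_k \ge 0$, it follows that $\sum_{x\in\mathcal{C}} w_{k,x} = 0$ for every $k$ with $a_k>0$.

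Third, show that $a_k>0$ can only happen for finitely many $k$; this is the main point. Evaluate $\sum_{x} w_{k,x}=0$ at a fixed $x_0\in\mathcal{C}$ and divide by $P^{(n)}_k(1)$. This gives
\[
1 + \sum_{y\in\mathcal{C}\setminus\{x_0\}} \hat{P}_k(\langle x_0,y\rangle) = 0, \qquad \hat{P}_k = P^{(n)}_k/P^{(n)}_k(1).
\]
The code is finite, because its size is $1/g_X$. All the cosines $\langle x_0,y\rangle$ lie in $X\subseteq(-1,1)$, so they stay away from $\pm1$. By Theorem \ref{darbouxest} combined with the growth $P^{(n)}_k(1)\sim c\,k^{n-3}$, we get $\hat{P}_k(t)\to 0$ for each fixed $t\in(-1,1)$. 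This is the same fact used in the proof of Lemma \ref{hulllemma}. Hence the left-hand side tends to $1$ as $k\to\infty$, which is a contradiction for all $k$ beyond some $K$. So $a_k=0$ for $k> K$, and
\[
g = g_X + \sum_{k\le K} a_k P^{(n)}_k
\]
is a polynomial.

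The main obstacle is the decay $\hat{P}_k(t)\to 0$. It is exactly where the hypothesis $-1\notin X$ is needed: at $t=-1$ we have $|\hat{P}_k(-1)|=1$, and antipodal pairs would defeat the argument. It is also where $n\ge3$ is needed, since for $n=2$ the normalized Chebyshev polynomials $\cos(k\theta)$ do not decay. A secondary technical point is justifying the Gegenbauer expansion when $g$ is not assumed continuous. For that I would apply Theorem \ref{cpdthm} to $g-g_X$ and project the $H$-invariant vector onto the isotypic components $W_k$, so that the computation above goes through with $\|\sum_x \pi(\cdot)\xi_k\|^2$ in place of $\|\sum_x w_{k,x}\|^2$.
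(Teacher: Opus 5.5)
Your argument is correct (for continuous $g$, as discussed below), but it works the linear-programming duality from the opposite side to the paper.

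\textbf{How the two routes differ.} The paper passes to the finite set $X'$ of cosines realized by the sharp code and observes $g_{X'}=g_X$, so $g$ is also optimal for $X'$. It then argues by perturbation. For large $i$ the normalized $\hat P_i$ is uniformly small on $X'$. So if such a coefficient $a_i$ were positive, one could delete $a_i\hat P_i$, absorb the small error on $X'$ into the constant, and renormalize to $g(1)=1$. This gives a witness for $X'$ with constant larger than $g_X$, a contradiction. You instead read off complementary slackness from the equality case of the Delsarte chain: $\sum_{x\in\mathcal{C}}w_{k,x}=0$ for every active degree, which contradicts decay once evaluated at a code point.

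\textbf{What the two routes share.} The analytic input is the same: decay of $\hat P_k$ on a finite subset of $(-1,1)$, which is where $-1\notin X$ and $n\ge 3$ are used. Both routes also give the same cutoff, $a_k=0$ as soon as $\hat P_k>-1/(|\mathcal{C}|-1)$ on $X'$. On the paper's side, the perturbation is profitable exactly when the error $\varepsilon$ on $X'$ satisfies $\varepsilon<g_X/(1-g_X)=1/(|\mathcal{C}|-1)$.

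\textbf{What each route buys.} Your route needs no competitor function and yields extra structure: $\sum_{x\in\mathcal{C}}f(x)=0$ for all $f\in W_k$ with $a_k>0$, and $g$ vanishes at every cosine the code realizes. The paper's route uses the code only to reduce to a finite cosine set. It therefore shows more generally that for any finite $X\subseteq(-1,1)$, every witness attaining $g_X>0$ is a polynomial, with no sharpness hypothesis. Your route genuinely needs the extremal code.

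\textbf{The discontinuous case.} Your side remark does not work. Theorem \ref{cpdthm} produces a class one representation of $SO(n)$ viewed as a discrete group, and such a representation need not be assembled from the $W_k$. For example, $t\mapsto\chi(t=1\vee t=-1)$ is positive definite on $S^{n-1}$: on any finite set its kernel is block diagonal with all-ones blocks on antipodal pairs. Yet it has no Gegenbauer expansion, since any $\sum a_k\hat P_k$ with $a_k\ge 0$ and $\sum a_k<\infty$ converges uniformly and is therefore continuous. Whether such terms are the only obstruction is exactly Conjecture \ref{compresthm2}. So your proof, like the paper's (which simply asserts the Gegenbauer expansion), should be read for continuous $g$. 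If that conjecture holds, your equality argument also removes the $\chi$-terms: they contribute $(a+b)|\mathcal{C}|$ to a double sum of nonnegative pieces that must vanish, forcing $a=b=0$.

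\textbf{A minor slip.} In your reproducing-kernel normalization, $P^{(n)}_k(1)=\dim W_k\sim c\,k^{n-2}$, not $c\,k^{n-3}$. This is harmless, since $\hat P_k$ does not depend on the normalization and still decays like $k^{-(n-2)/2}$.
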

    \begin{proof}
        Take $X'$ to be the finite set corresponding to the optimal code.
        Note $g_{X'}=g_X.$
        Taking $g$ to be the optimal function for $X$
        it must have an expansion in a series of normalized Gegenbauer polynomials,
        but for large enough indices there exists an $\varepsilon>0$
        such that each $P^{(n)}_i/P^{(n)}_i(1)-\varepsilon$ positive definite on $X'$. 
        Each of these indices must have coefficient zero, or else 
        $g_{X'}>g_X$ as we could take the optimal candidate the for $X$ and restrict it to $X'.$
    \end{proof}

    Similarly we see the following result.
    We say $X$ is {\bf antipodal Delsarte sharp} if $X$ is Delsarte sharp with a code $\mathcal{C}$
    such that $\mathcal{C}=-\mathcal{C}.$
    \begin{theorem}
        Let $X \subseteq [-1,1)$ be antipodal Delsarte sharp such that
        $X\cap -X = X \cap (-1,1).$
        Any even function $g$
        such that $g-2g_X$ is positive definite, $g|_{X\setminus \{\pm 1\}}\leq 0$ and $g(1)=1$ is a polynomial.
    \end{theorem}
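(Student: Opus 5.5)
We will follow the template of the proof of Theorem \ref{firstsharptheorem}, adapted to the antipodal situation. Fix an antipodal sharp code $\mathcal{C}=-\mathcal{C}$ with $|\mathcal{C}|=1/g_X$, and let $X'\subseteq X$ be the finite set of cosines actually realized by distinct pairs in $\mathcal{C}$; since $\mathcal{C}$ is antipodal, $-1\in X'$, and by the hypothesis $X\cap -X = X\cap(-1,1)$ the set $X'\setminus\{-1\}$ is symmetric, $X'\setminus\{-1\}=-(X'\setminus\{-1\})\subseteq(-1,1)$. Let $g$ be any even function with $g-2g_X$ positive definite, $g|_{X\setminus\{\pm1\}}\le 0$ and $g(1)=1$. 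First we record the antipodal Delsarte inequality: pairing points of $\mathcal{C}$ into $|\mathcal{C}|/2$ antipodal pairs and running the argument of Theorem \ref{delsartegs} with the measure $\nu=\sum_{x\in\mathcal{C}}\delta_x-|\mathcal{C}|\mu$, the diagonal and the antipodal entries contribute $g(1)+g(-1)=2$ per point (since $g$ is even), while all other entries are $\le0$. This yields $|\mathcal{C}|\cdot 2\ge |\mathcal{C}|^2\cdot 2g_X$-type inequality, i.e. $|\mathcal{C}|\le 1/g_X$, with equality forcing every slack to vanish. In particular sharpness forces $g$ to be optimal, and forces equality in the positive semidefiniteness step for the measure $\nu$.

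Next expand $g=\sum_k a_k \hat P_k$ in normalized Gegenbauer polynomials $\hat P_k=P_k^{(n)}/P_k^{(n)}(1)$ with $a_k\ge 0$ for $k\ge1$ (Theorem \ref{schoclassic}, applied to $g-2g_X$, which shifts only $a_0$). Since $g$ is even, $a_k=0$ for odd $k$. The key estimate is Darboux's asymptotic, Theorem \ref{darbouxest}: $\hat P_k(x)\to0$ uniformly on the finite set $X'\cap(-1,1)$, while for even $k$ we have $\hat P_k(\pm1)=1$. So there is $K$ and $\varepsilon>0$ such that for every even $k\ge K$, the function $\hat P_k$ satisfies $|\hat P_k|\le\varepsilon$ on $X'\setminus\{\pm1\}$. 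Now argue by contradiction as in Theorem \ref{firstsharptheorem}: if some $a_k>0$ with $k\ge K$ even, replace the tail $\sum_{k\ge K}a_k\hat P_k$ by the constant $\sum_{k\ge K}a_k$ (times a suitable correction) — more precisely, use that $\hat P_k-\varepsilon$ restricted to $X'$ is dominated so that moving the mass $a_k$ into the constant term $a_0$ and renormalizing strictly increases the average while keeping the values on $X'\setminus\{\pm1\}$ nonpositive, after a small rescaling. The evenness is essential here: at $-1$ the tail contributes exactly $a_k$, the same as at $1$, so moving it to the constant term does not disturb the constraint $g(-1)=g(1)$ that the antipodal count uses. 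This gives a function witnessing a strictly larger constant for $X'$, contradicting $g_{X'}=g_X$ (which holds because the sharp code has cosines in $X'$, giving $1/g_{X'}\ge|\mathcal{C}|=1/g_X$, and $X'\subseteq X$ gives the reverse).

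Hence $a_k=0$ for all $k\ge K$ and $g$ is a polynomial of degree $<K$. The main obstacle is the middle step: making the ``move the tail into the constant'' perturbation precise so that the strict gain in $\bar g$ is not eaten by the $\varepsilon$ errors on $X'\setminus\{\pm1\}$. I would handle it by first subtracting $\varepsilon\sum_{k\ge K}a_k$ uniformly, noting the resulting function is still admissible on $X'\setminus\{\pm 1\}$ and has $g(1)$ reduced by at most $(1+\varepsilon)\sum_{k\ge K} a_k$ but average raised by $(1-\varepsilon)\sum_{k\ge K}a_k$; choosing $\varepsilon<1$ and renormalizing by $g(1)$ gives the strict improvement.
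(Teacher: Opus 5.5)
Your overall route is the paper's: the paper only says this result follows ``similarly'' to Theorem \ref{firstsharptheorem}. That means passing to the finite set $X'$ of cosines realized by the sharp antipodal code, using Darboux decay of $\hat P_k$ on $X'\cap(-1,1)$, and contradicting optimality. However, the step you flag as the main obstacle is where the proposal breaks, because the perturbation you describe goes in the wrong direction.

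Your first paragraph already shows that sharpness forces equality throughout the antipodal Delsarte chain. Hence $g(\langle x,y\rangle)=0$ for all $x\neq\pm y$ in $\mathcal{C}$, i.e.\ $g\equiv 0$ on $X'\setminus\{-1\}$. Suppose you move the tail mass $T=\sum_{k\ge K}a_k$ into the constant term and then subtract $\varepsilon T$. At these points the new function equals $-\sum_{k\ge K}a_k\hat P_k+(1-\varepsilon)T\ge(1-2\varepsilon)T>0$, so it is not admissible. More generally, any function $g-\sum_{k\ge K}a_k\hat P_k+c$ that stays $\le 0$ there needs $c\le\min_{X'\setminus\{-1\}}\sum_{k\ge K}a_k\hat P_k\le\varepsilon T$. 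So the accounting in your last paragraph (admissible \emph{and} average raised by $(1-\varepsilon)T$) cannot hold.

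The fix is to delete the tail rather than transfer it. Put $h=g-\sum_{k\ge K}a_k\hat P_k-\varepsilon T$. Then:
\begin{itemize}
\item $h$ is even and $h\le 0$ on $X'\setminus\{-1\}$;
\item $h(\pm1)=1-(1+\varepsilon)T$;
\item $h$ minus its average $2g_X-\varepsilon T$ is a nonnegative combination of the $\hat P_k$ with $1\le k<K$.
\end{itemize}
The gain comes only from renormalizing. The function $h/h(1)$ has average $(2g_X-\varepsilon T)/(1-(1+\varepsilon)T)$, which exceeds $2g_X$ iff $\varepsilon(1-2g_X)<2g_X$. The same condition gives $h(1)>0$, since $T\le 1-2g_X$. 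So $\varepsilon<1$ is not enough: you must fix $\varepsilon<2g_X/(1-2g_X)$ first and only then choose $K$ by Darboux.

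Conclude by applying your antipodal inequality to $\mathcal{C}$ with $h/h(1)$, which gives $|\mathcal{C}|<1/g_X$. Do not route this through $g_{X'}$: since $h(-1)>0$, $h$ is inadmissible for the non-even constant $g_{X'}$ when $-1\in X'$.

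A cleaner alternative avoids the perturbation altogether. Equality forces $\iint g\,d\nu\,d\nu=0$. For $k\ge 1$, each $\iint \hat P_k\,d\nu\,d\nu=\sum_{x,y\in\mathcal{C}}\hat P_k(\langle x,y\rangle)$ is $\ge 0$, so every $a_k>0$ requires this sum to vanish. But for even $k\ge K$ the sum is at least $2|\mathcal{C}|-|\mathcal{C}|^2\varepsilon>0$ once $\varepsilon<2g_X$.

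Finally, the Darboux decay $\hat P_k\to 0$ on $(-1,1)$ needs $n\ge 3$. On $S^1$ one has $\hat P_k(\cos\theta)=\cos k\theta$, and the statement fails there, e.g.\ for the square with $X=\{-1,0\}$, where $\tfrac12+\tfrac12\sum_s\beta_s\hat P_{2+4s}$ is optimal for any probability weights $\beta_s$. The paper also leaves this assumption implicit.
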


    For example, the codes of Sloane and Odlyzko correspond to Delsarte sharp codes \cite{odlyzko1988bounds}
    for their corresponding sets of angles. The difficulty in dimension three and four, necessitating sophisticated
    alternative arguments, as done by Musin \cite{musin2006kissing,musin2008kissing}, arises from a lack of Delsarte sharpness.
    Such is clearly analogous to the elegance and ease of sphere packing in dimension $8$ and $24$ \cite{viazovska2017sphere, cohn2017sphere} versus the onerous Herculean dimension 3 case chase by Hales \cite{hales2011sphere}.
    That low complexity algebraic functions in fact must be the witnesses in sharp Delsarte situations,
    and that the uniqueness or nonuniqueness can be obtained by a finite linear program, we speculate what the deep reason
    for the low algebraic complexity and (perhaps constrained uniqueness) of Viazovska's functions in the sphere packing regime
    \cite{viazovska2017sphere, cohn2017sphere,RV}. {Recently Kulikov, Nazarov, and Sodin \cite{Nazarov} found sufficient conditions for a pair of discrete subsets of the real line to be a uniqueness or a non-uniqueness pair for the Fourier transform. This has been generalized to $\mathbb{R}^{n}$ (although via different methods) in an unpublished work of Adve \cite{Adve}. }

\section{Partially defined complete positivity preservers} \label{partialpoz}
    \begin{definition}
        Let $X \subseteq \mathbb{R}.$
        Let $\hilbert$ be Hilbert space.
        We say that $f:X \rightarrow \BH$ is a partially defined complete postivity preserver
        if for any partially defined positive semi-definite matrix $[m_{ij}]_{ij},$
        we have that $[f(m_{ij})]_{ij}$ is again partially defined positive semidefinite.
    \end{definition}

    \begin{lemma}\label{statelemmapres}
        Let $X \subseteq [-1,1]$ such that $1\in X.$
        Let $\hilbert$ be Hilbert space.
        Let $f:X \rightarrow \BH$ is a partially defined complete positivity preserver.
        Let $G$ be a group with the discrete topology.
        Let $\varphi$ be a state on $\mathbb{C}[G]$.
        Let \[\Gamma = \{\gamma \in G| \varphi(\gamma) \in X\}.\]
        Then, $\hat{f}(g) = f(\varphi(g))$ is a partially defined completely positive definite
        function on $\Gamma.$
    \end{lemma}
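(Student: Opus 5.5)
The plan is to verify the criterion of Lemma \ref{lemposcond} directly, with the scalar criterion replaced by its block-operator analogue for $\BH$-valued maps. First, $\Gamma$ is symmetric and Archimedian, so $\mathbb{C}[\Gamma]$ is an operator system and the statement makes sense. Indeed, $\varphi(e)=1\in X$ gives $e\in\Gamma$. For symmetry, a state on $\mathbb{C}[G]$ satisfies $\varphi(g^{-1})=\varphi(g^*)=\overline{\varphi(g)}$; if $\varphi(g)\in X\subseteq[-1,1]$ is real, then $\varphi(g^{-1})=\varphi(g)\in X$. We will assume, as in the intended application, that $H$ is trivial, so that $\Gamma$ lives in $G/H=G$, or else that $\varphi$ is $H$-invariant by the proposition preceding Theorem \ref{cpdthm}.

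The main step is to fix $g_1,\ldots,g_N\in G$ and consider the partially defined matrix $M=[\varphi(g_i^{-1}g_j)]_{i,j}$, where entries with $g_i^{-1}g_j\notin\Gamma$ are left unspecified. Since $\varphi$ is a state, hence completely positive on $\mathbb{C}[G]$, the full matrix $[\varphi(g_i^{-1}g_j)]_{i,j}$ is positive semidefinite by Lemma \ref{lemmaposcondinit}. In particular $M$ is a partially defined positive semidefinite matrix whose specified entries all lie in $X$. Its diagonal entries are $\varphi(e)=1\in X$. Applying the hypothesis that $f$ is a partially defined complete positivity preserver, $[f(m_{ij})]=[\hat f(g_i^{-1}g_j)]$, with the same entries unspecified, admits a positive semidefinite completion in $M_N(\BH)$.

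It remains to convert this into complete positivity of $\hat f$ on $\mathbb{C}[\Gamma]$. We repeat the proof of Lemma \ref{lemposcond} with operator coefficients. Take a positive matrix $P$ over $\mathbb{C}[\Gamma]$ and write $P=\sum_t A_t^*[g_i^{-1}g_j]A_t$ with scalar matrices $A_t$ via Lemma \ref{positivityofgrpelt}. Then $\hat f(P)=\sum_t (A_t\otimes I)^*[\hat f(g_i^{-1}g_j)](A_t\otimes I)$, where the unspecified positions contribute zero in total because $P$ has entries in $\mathbb{C}[\Gamma]$. Substituting the positive completion from the previous step exhibits $\hat f(P)$ as positive.

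The point I expect to be the main obstacle is exactly this cancellation claim. The coefficient in front of each group element $g\notin\Gamma$ vanishes in $P$, but several index pairs $(i,j)$ share the same product $g_i^{-1}g_j=g$, and it is only their sum that vanishes. A single completion must therefore assign a common value to all such positions. I would handle this by choosing the completion of $[f(m_{ij})]$ consistently: first complete $M$ itself by the genuine values $\varphi(g_i^{-1}g_j)$, which depend only on the group element, and then use the preserver property in its strongest form. If consistency cannot be forced this way, I would fall back on reading the definition of partially defined complete positivity as the matrix criterion of Lemma \ref{lemposcond} itself, in which case the second paragraph already completes the proof.
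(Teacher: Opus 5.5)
Your fallback argument is exactly the paper's proof: the paper cites Lemma \ref{lemposcond}, notes that $[\varphi(g_i^*g_j)]$ is a partially defined positive semidefinite matrix whose specified entries lie in $X$, and applies the preserver hypothesis. Your checks that $\Gamma$ is symmetric and Archimedian are a small addition the paper omits.

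Your worry about the cancellation step is legitimate. The proof of Lemma \ref{lemposcond} tacitly needs a completion that takes the same value at all positions with the same $g_i^{-1}g_j$. Your proposed repair does not supply one, for two reasons. First, $f$ is undefined at $\varphi(g)$ for $g\notin\Gamma$, so the preserver cannot be applied to the fully completed matrix. Second, the preserver hypothesis only asserts that \emph{some} completion exists, with no control over its entries. So your proof ultimately rests on Lemma \ref{lemposcond} exactly as the paper's does.
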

    \begin{proof}
        By Lemma \ref{lemposcond}, it is sufficient to show that for any choice of
        $g_1, \ldots g_N$ that $[\hat{f}(g_i^*g_j)]_{i,j}$
        is a partially defined positive semidefinite matrix.
        Note that $[\varphi(g_i^*g_j)]$ is a partially defined positive semidefinite matrix, thus, as $f$ is a positivity preserver, we have that $[f(\varphi(g_i^*g_j))]=[\hat{f}(g_i^*g_j)]_{i,j}$ is as well.
    \end{proof}

    \begin{theorem}
        Let $X \subseteq [-1,1]$ such that $1\in X.$
        Let $\hilbert$ be Hilbert space.
        A function $f:X \rightarrow \BH$ is a partially defined complete postivity preserver
        if and only if
            \[f(x) = a\chi(x=1 \vee x=-1) + bx\chi(x=1 \vee x=-1) +  \sum c_ix^i\]
        where $a,b,c_i \geq 0, \sum c_i < \infty$ and $\chi$ is an indicator function. 
    \end{theorem}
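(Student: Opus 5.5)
Both directions go through the ind-finite sphere $S^\infty = SO(\infty)/SO(\infty)_+$, whose double cosets are labelled by $\gamma_{11}\in[-1,1]$.

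\emph{Sufficiency.} Suppose $f$ has the stated form and $[m_{ij}]$ is a partially defined positive semidefinite matrix with specified entries in $X$. Choose a positive semidefinite completion $M$. Since $1 \in X$ and we are in the regime $X\subseteq[-1,1]$, I will first normalize so that all entries of $M$ lie in $[-1,1]$; if a diagonal entry is unspecified it may be enlarged freely, so this costs nothing. I then realize $M$ as a Gram matrix of vectors in a real Hilbert space and apply $f$ entrywise to the completion. The power series part $\sum c_i x^i$ preserves positivity by the Schur product theorem, since $c_i\geq 0$ and $\sum c_i<\infty$. The parts $\chi(x=\pm1)$ and $x\chi(x=\pm 1)$ are handled by noting that when the vectors are unit vectors, $\langle x_i,x_j\rangle=\pm1$ defines an equivalence relation ``$x_i=\pm x_j$''. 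The resulting matrices are then block (signed) rank-one sums, hence positive semidefinite. In the operator-valued case the coefficients are positive operators, and tensoring gives complete positivity. The one subtlety is vectors that are not unit, which requires the normalization above.

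\emph{Necessity.} Let $G=SO(\infty)$ with the discrete topology and $\varphi(\gamma)=\gamma_{11}$, which is a state on $\mathbb{C}[G]$ (it is $\langle\gamma e_1,e_1\rangle$). Lemma \ref{statelemmapres} then shows that $\hat f(\gamma)=f(\gamma_{11})$ is a partially defined completely positive definite function on $\Gamma=\{\gamma:\gamma_{11}\in X\}$, and $\Gamma$ is symmetric and Archimedean because $(\gamma^{-1})_{11}=\gamma_{11}$ and $1\in X$. Theorem \ref{liftpd} (Arveson extension plus Theorem \ref{cpdthm}) produces a class one representation $\pi$ and an isometry $V$ into the $SO(\infty)_+$-invariant vectors with $\hat f(\gamma)=V^*\pi(\gamma)V$ on $\Gamma$. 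Extend by $F(\gamma)=V^*\pi(\gamma)V$ on all of $G$. By the bi-invariance proposition, $F$ depends only on the double coset, hence only on $\gamma_{11}$, so $F(\gamma)=\tilde f(\gamma_{11})$ with $\tilde f:[-1,1]\to\BH$ extending $f$.

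For each vector $v$, the function $\langle\tilde f(\cdot)v,v\rangle$ is then a positive definite kernel on $S^\infty$, and Corollary \ref{indfinitepd} gives the scalar form with nonnegative coefficients $a_v,b_v,c_{i,v}$. The scalar coefficients must then be assembled into operators. I plan to do this by decomposing $\pi$ via the state classification of Corollary \ref{compresthm} into the three types of extreme class one states. This should give $\tilde f=A\chi+Bx\chi+\sum C_ix^i$ with $A,B,C_i\geq0$, since the coefficients $a_v,b_v,c_{i,v}$ are quadratic forms in $v$ and polarization identifies them as positive operators, and $\sum C_i$ converges weakly to $V^*V$-type bounds. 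Finally, restricting to $X$ gives the form claimed.

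I expect the main obstacle to be the coefficient-assembly step: showing the quadratic forms $v\mapsto c_{i,v}$ are bounded and come from operators. This requires uniqueness of the expansion in Corollary \ref{indfinitepd}, that is, linear independence of $\chi(x=\pm1)$, $x\chi(x=\pm1)$ and $x^i$ as functions on $[-1,1]$, which holds on the full interval. That is exactly why I pass to the extension $\tilde f$ rather than working on $X$.
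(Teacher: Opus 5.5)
Your necessity half follows the paper's proof: Lemma \ref{statelemmapres}, Theorem \ref{liftpd}, bi-invariance, then Corollaries \ref{indfinitepd} and \ref{compresthm}. Your coefficient assembly correctly supplies what the paper leaves implicit: uniqueness of the expansion on all of $[-1,1]$, linearity of coefficient extraction, polarization, and the bound $0\le c_{i,v}\le\langle f(1)v,v\rangle$. (Minor point: $V$ is an isometry only after normalizing $f(1)=I$.)

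The genuine gap is in sufficiency, at the step ``normalize so that all entries of $M$ lie in $[-1,1]$; if a diagonal entry is unspecified it may be enlarged freely.'' Enlarging an unspecified diagonal entry pushes the completion away from $[-1,1]$, not into it. In general no completion with entries in $[-1,1]$ exists. In fact the implication you are proving is false for the definition as stated, which allows unspecified diagonal entries (as in \eqref{simplematrix}). Take $X=[-1,1]$ and $f=\chi(x=1\vee x=-1)$. The partial matrix $\begin{bmatrix}1/2&1\\1&?\end{bmatrix}$ is completable (put $2$), but its image $\begin{bmatrix}0&1\\1&?\end{bmatrix}$ is not. In Gram terms, $\|x_1\|^2=1/2$ while $\langle x_1,x_2\rangle=1$. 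So an inner product of $\pm1$ no longer means $x_i=\pm x_j$, and your block structure collapses. For operator coefficients even the power-series part fails. With $P_i$ the projection onto $e_i$, the function $f(x)=\sum_i i^{-2}x^iP_i$ has norm-convergent $\sum_i C_i$. Yet any completion $Y$ of the missing block of the image of the same partial matrix must satisfy $\langle Ye_i,e_i\rangle\ge 2^i/i^2$, so no bounded completion exists.

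What your argument does prove, once the normalization sentence is dropped, is sufficiency for partial matrices whose diagonal entries are all specified. These are the only ones the necessity direction uses, since Lemma \ref{statelemmapres} produces diagonals $\varphi(e)=1$. For such matrices, every completion satisfies $|\tilde m_{ij}|\le\sqrt{m_{ii}m_{jj}}\le 1$. Hence $\sum_i C_i\otimes\tilde M^{\circ i}$ converges weakly to a bounded positive block matrix, using Cauchy--Schwarz on $\langle C_i\xi,\eta\rangle$ and summing over $i$. Moreover, a Gram vector with $\|x_i\|<1$ has $|\langle x_i,x_j\rangle|<1$ for every $j$. So the $\chi$ and $x\chi$ terms vanish on its row and column, leaving exactly your signed rank-one block structure on the unit vectors. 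Note that the paper's written proof of this theorem treats only necessity. Its implicit justification of the converse is the same flawed step: the introduction's remark that any completion has entries in $[-1,1]$. The statement should therefore be read with all diagonal entries specified.
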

    \begin{proof}
        Endow $SO(\infty)$ with the discrete topology.
        Let \[\Gamma = \{\gamma \in SO(\infty)| \gamma_{11} \in X.\}\]
        First, $f$ induces a completely positive definite function $\hat{f}$ on $\Gamma$ via
        $\hat{f}(\gamma) = f(\gamma_{11})$ by Lemma \ref{statelemmapres}
        By Theorem \ref{liftpd}, we have that $\hat{f}$ extends to a completely positive definite
        function on $SO(\infty)/SO(\infty)_+$ and is thus of the form Corollary \ref{compresthm} classifies the representations of type one that may occur, so we are done.
    \end{proof}

    If we do not contain the endpoints in a nice enough way or the domain is unbounded above, we get rid of the discontinuous extreme points simply by considering the function on smaller intervals.
    \begin{theorem}
        Let $X \subseteq \mathbb{R}$ such that either
        \begin{enumerate}
            \item $\sup X =\infty,$
            \item $\sup X>0, \sup X \geq -\inf X$ and $\pm\sup X \notin X.$ 
        \end{enumerate}
        Let $\hilbert$ be Hilbert space.
        A function $f:X \rightarrow \BH$ is a partially defined complete postivity preserver
        if and only if
            \[f(x) =  \sum c_ix^i\]
        where $c_i \geq 0, \sum c_i < \infty.$
    \end{theorem}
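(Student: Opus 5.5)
The plan is to reduce to the previous theorem (the case $X\subseteq[-1,1]$ with $1\in X$) by rescaling, and then to remove the discontinuous extreme terms using the extra room at the endpoints. The converse direction is routine. If $f(x)=\sum c_i x^i$ with $c_i\geq 0$ and the series converging on $X$, then any partially defined positive semidefinite matrix with entries in $X$ has a completion $M$. By the Schur product theorem each Hadamard power $M^{\circ i}$ is positive semidefinite, so $\sum c_i M^{\circ i}\otimes I$ (or $c_i$ operator coefficients) is a completion of $[f(m_{ij})]$. One caveat: completing entries may leave the region of convergence. In case (1) we would in fact need entire convergence, which matches Theorem \ref{mainresultpreview}. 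In case (2) the completion can be taken with entries bounded by the diagonal, and the diagonal lies in $X$.

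For the forward direction, fix any $t\in X$ with $t>0$ and $t\geq -\inf(X\cap[-t,t])$. Set $X_t=\{x/t : x\in X, |x|\leq t\}\subseteq[-1,1]$, which contains $1$, and define $f_t(y)=f(ty)$ on $X_t$.

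Scaling a partially defined positive semidefinite matrix by $t$ preserves that property, so $f_t$ is again a partially defined complete positivity preserver. The previous theorem then gives
\[
f(ty)=a_t\chi(y=\pm1)+b_t y\chi(y=\pm1)+\sum c_i^{(t)} y^i \quad\text{on } X_t .
\]
We then choose the $t$'s to kill the indicator terms.

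In case (2), take $t_k\in X$ with $t_k\uparrow s=\sup X$. Each $X_{t_k}$ is nested inside the next after rescaling. Points $x$ with $|x|=t_k$ are interior for $X_{t_{k+1}}$, and by hypothesis $\pm s\notin X$, so every point of $X$ eventually satisfies $|x|<t_k$. On that region the representation from $X_{t_{k+1}}$ has no indicator contribution. So $f(x)=\sum_i c_i^{(t_{k+1})}t_{k+1}^{-i}x^i$ on $X\cap(-t_k,t_k)$.

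In case (1), use $t_k\to\infty$ in the same way: every point is eventually strictly interior. Here a preliminary step may be needed, namely replacing $X$ by a set where the $\sup\geq-\inf$ condition holds at the chosen $t_k$. This should be fine since we only need $1\in X_t$, and we may restrict to the symmetric window $|x|\le t$.

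The main obstacle is compatibility of the power series coming from different $t_k$. On a finite or sparse set, two different positive-coefficient power series can agree. So we cannot simply say the coefficients $c_i^{(t_k)}t_k^{-i}$ stabilize.

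To handle this, first normalize by the value $f(t_1)$. Then use positivity of the coefficients to get uniform bounds: $\sum_i c_i^{(t)}t^{-i}x^i\le f(x)$ for $0<x\in X$, so $c_i^{(t)}t^{-i}\le f(x)x^{-i}$. This makes the operator-valued coefficient sequences $C_i^{(k)}=c_i^{(t_k)}t_k^{-i}$ bounded for each $i$, and a diagonal weak$^*$ (WOT) compactness argument extracts a limit $C_i\geq0$.

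Pointwise agreement $f(x)=\sum C_ix^i$ then requires controlling tails. For $x$ fixed and $x<x'\in X$, we have $\sum_{i>m}C_i^{(k)}x^i\le (x/x')^{m}f(x')$. This tail bound is uniform in $k$, so the limit passes. Negative $x$ are handled by bounding with $|x|$, using $\sup\geq-\inf$.

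In case (1), since arbitrarily large $x'$ are available, the bound $C_i\le f(x')x'^{-i}$ for all large $x'$ forces $\limsup C_i^{1/i}=0$, which recovers Theorem \ref{mainresultpreview}. The summability condition $\sum c_i<\infty$ stated in the theorem should be read relative to the scale; in case (2) it corresponds to convergence on $(-s,s)$.
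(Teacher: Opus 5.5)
Your forward direction follows the paper's argument. The paper's whole proof is the one-line remark that you restrict to windows $|x|\le t$, rescale into the preceding theorem for $1\in X\subseteq[-1,1]$, and note that the indicator terms live only at $\pm t$, which can be pushed off $X$. Your WOT-compactness extraction, together with the uniform tail bound $\sum_{i>m}C_i^{(k)}|x|^i\le(|x|/x')^{m}f(x')$, supplies a gluing step the paper silently skips: on a sparse $X$ the window expansions need not agree. Your reading of the coefficient condition is also the right one: entire in case (1), as in Theorem \ref{mainresultpreview}, and convergent on $(-\sup X,\sup X)$ in case (2).

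The gap is in the converse for case (2). The claim that ``the diagonal lies in $X$'' is false. Diagonal entries may be unspecified, as in \eqref{simplematrix}, and then every completion can be forced outside the interval of convergence.

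\begin{itemize}
\item[(a)] \emph{Your step fails even for scalar $f$.} Take $X=(-1,1)$ and the partial matrix $\begin{bmatrix}0.1&0.9\\0.9&?\end{bmatrix}$. Every completion has $(2,2)$ entry at least $8.1$, so for $f(x)=(1-x)^{-1}$ the series $\sum c_iM^{\circ i}$ diverges for every completion $M$. The conclusion still holds here, since the image $\begin{bmatrix}10/9&10\\10&?\end{bmatrix}$ is completable, but your argument does not reach it.
\item[(b)] \emph{Repair for scalar $f$ (or $\dim\mathcal H<\infty$).} Take Gram vectors $v_i$ of some completion. For indices with specified diagonal, $\|v_i\|^2\in X$, so $\Phi(v_i)=(\sqrt{c_k}\,v_i^{\otimes k})_k$ is defined and realizes $f(\langle v_i,v_j\rangle)$. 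For each index $\ell$ with free diagonal, solve $\langle\Phi(v_j),w_\ell\rangle=f(m_{j\ell})$ inside the span of the $\Phi(v_j)$. This system is consistent: a relation $\sum\alpha_j\Phi(v_j)=0$ forces $\sum\alpha_j m_{j\ell}^k=0$ for each $k$ with $c_k\neq 0$. Finally, add orthogonal components that carry the remaining off-diagonal data, using a large diagonal.
\item[(c)] \emph{The implication is false for operator-valued $f$ on infinite-dimensional $\mathcal H$.} Take $f(x)=\operatorname{diag}\big((x/2)^k\big)_{k\ge 0}$. It has positive coefficients with $\sum\|c_k\|=2$ and is bounded on $X$. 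Yet a completion $Z$ of the $(2,2)$ slot of $[f(m_{ij})]$ for the matrix above would need $\langle Ze_k,e_k\rangle\ge 0.45^{2k}/0.05^{k}=4.05^{k}$ for all $k$. So no bounded completion exists, and no argument along your lines can prove the statement as written.
\end{itemize}

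The ``if'' direction in case (2) therefore needs $\dim\mathcal H<\infty$ or a stronger hypothesis on the coefficients. Case (1) is unaffected: for entire $f$, any completion works.
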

\section{Some concluding remarks} \label{conclusion}
    \subsection{Delsarte hallucinations }
        Finding particular finite sets $X$ that are Delsarte sharp is somewhat interesting. Our results give some finite linear program one can run to find the Delsarte constant for $X.$ Thus, one would hope that if you got an integer out, you would have a code. However, determining if there is a grammian constituted from particular entries is a difficult combinatorial matrix completion problem, and moreover, there exist {\bf Delsarte hallucinations}, sets $X$ such that $g_X$ reports an integer but there is no code with that size. For example, with $X=\{-1,1/2,-1/2,1\}, n= 10,$ we get a bound of $46,$ but the interested reader will find no such code.

        However, the method also produces genuine examples that are somewhat nontrivial.
        For example, taking $X=\{1/3,-1/3,1\}$ there is a spherical code with $10$ points as estimated for $n=5$ which 
        is obtained by labeling your code by vertices of the Petersen graph and putting a $1/3$ if there is no edge and $-1/3$ if there is an edge. The Petersen graph appears naturally in the theory of association schemes \cite{delsarte2002association}, so maybe this is not exceptionally surprising, but it is more exotic that say, some part of some super important well-known lattice.

  \subsection{Olshanski Spherical pairs}
    In the recent times, Olshanski made some new developments in the field of \textbf{asymptotic harmonic analysis} where he studied the inductive limits of homogeneous spaces as the dimension goes to infinity. More precisely, Olshanski \cite{Olshanskihowe} developed a general theory of the inductive limit $(G,K)$ of an increasing sequence $(G_{n}, K_{n})$ of Gelfand pairs--the so called spherical pairs\cite{Olshanskihowe}-- and studied the notion of spherical functions of such pairs. Moreover, he obtained some approximation results of the spherical functions of the inductive limit $(G,K)$ as limits of spherical functions for the Gelfand pairs $(G_{n},K_{n})$ in \cite{Olshanskiapprox} (such a sequence is called as the \textbf{Vershik--Kerov} sequence). We shall develop such results for the discontinuous representation of the ind-sphere $S^{\infty}$ and form some finitary conjectures. Although we shall deal with the sphere case (which is a rank one symmetric space), going through Olshanski's approach is proven to be very instructive. For more details see Faraut \cite{FarautTIFR}.
    
     \subsubsection{Gelfand Pairs}
     
     Let $G$ be a locally compact group , and $K$ a compact subgroup. The space $L^{1}(K\backslash G/K)$ of bi-$K$-invariant integrable functions on $G$ forms a convolution algebra. Recall that the pair $(G,K)$ is a \textbf{Gelfand pair} if $L^{1}(K\backslash G/K)$ is commutative. A nontrivial continuous function $\varphi$ is said to be \textbf{spherical} if 
     \[
     \int_{K} \varphi(xky)\ dk = \varphi(x) \varphi(y), \quad \text{for all}\ x,y\in G,
     \]
     where $dk$ is the normalized Haar measure on $K.$ Note that, a spherical function is bi-$K$-invariant and $\varphi(e) = 1.$

     Let $\mathcal{P}$ denote the continuous bi-$K$-invariant positive definite functions on $G$ and $\mathcal{P}_{1}$ denotes the convex base of functions $\varphi$ in $\mathcal{P}$ with $\varphi(e) = 1.$ The following theorem can be seen as the bi-invariant variant of the GNS:

     \begin{theorem}
        For a Gelfand pair $(G,K)$ and $\varphi \in \mathcal{P}_{1}$, the following are equivalent:
        \begin{enumerate}
            \item $\varphi$ is spherical.
            \item $\varphi \in \operatorname{Ext}(\mathcal{P}_{1})$, that is, $\varphi$ is an extreme point of the convex set $\mathcal{P}_{1}.$
            \item The GNS representation $\pi_{\varphi}$ associated with $\varphi$ is irreducible.
        \end{enumerate}
     \end{theorem}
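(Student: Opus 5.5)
The statement to prove is the equivalence, for a Gelfand pair $(G,K)$ and $\varphi\in\mathcal{P}_1$, of (1) $\varphi$ spherical, (2) $\varphi$ extreme in $\mathcal{P}_1$, and (3) $\pi_\varphi$ irreducible. The plan runs everything through the GNS triple $(\pi_\varphi,\mathcal{H}_\varphi,\xi_\varphi)$, where $\varphi(g)=\langle \pi_\varphi(g)\xi_\varphi,\xi_\varphi\rangle$. Since $\varphi$ is bi-$K$-invariant, the argument in the proof of Theorem \ref{cpdthm} shows $\xi_\varphi$ is $K$-invariant. Let $P=\int_K\pi_\varphi(k)\,dk$ be the orthogonal projection onto the $K$-fixed vectors $\mathcal{H}_\varphi^K$. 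The key structural fact about Gelfand pairs I will use is: for any unitary representation $\pi$, the operators $P\pi(f)P$ with $f\in L^1(K\backslash G/K)$ form a commutative $*$-algebra on $\mathcal{H}^K$. This holds because $f\mapsto P\pi(f)P=\pi(f)|_{\mathcal{H}^K}$ is a $*$-homomorphism of the commutative algebra $L^1(K\backslash G/K)$, and bi-invariant $f$ satisfy $\pi(f)=P\pi(f)P$.

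For (2)$\Leftrightarrow$(3) I follow the standard GNS dictionary. Any decomposition $\varphi=t\varphi_1+(1-t)\varphi_2$ in $\mathcal{P}$ with $\varphi_1\le\varphi$ corresponds, via a Radon--Nikodym argument, to a positive operator $T\le I$ in the commutant $\pi_\varphi(G)'$ with $\varphi_1(g)=\langle\pi_\varphi(g)T\xi_\varphi,\xi_\varphi\rangle$. Bi-$K$-invariance of $\varphi_1$ is automatic: $T$ commutes with $\pi(K)$, so $\langle \pi(k g k')T\xi,\xi\rangle=\langle\pi(g)T\xi,\xi\rangle$. Thus extremality in $\mathcal{P}_1$ is the same as extremality among all normalized positive definite functions, and this is equivalent to $\pi_\varphi(G)'=\mathbb{C}$, i.e.\ irreducibility. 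The direction that needs care is showing that a nontrivial projection $Q$ in the commutant yields a genuinely different $\varphi_1$. This is handled by cyclicity of $\xi_\varphi$: if $Q\xi=c\,\xi$ then $Q$ acts as a scalar on $\overline{\pi(G)\xi}=\mathcal{H}$.

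For (3)$\Rightarrow$(1), irreducibility together with the Gelfand property forces $\dim\mathcal{H}^K=1$. Indeed, the commutative algebra $P\pi(L^1(K\backslash G/K))P$ is weakly dense in $B(\mathcal{H}^K)$, since $P\pi(G)''P=PB(\mathcal{H})P$ and averaging over $K$ on both sides lands in the bi-invariant part. A commutative algebra that is weakly dense in $B(\mathcal{H}^K)$ forces $\mathcal{H}^K$ to be one-dimensional. Hence $\mathcal{H}^K=\mathbb{C}\xi$ and $P=\xi\otimes\xi^*$, so
\[\int_K\varphi(xky)\,dk=\langle\pi(y)\xi,\, \pi(x)^*P\pi(x)^*{}^{*}\ldots\rangle,\]
which is more cleanly written as $\langle \pi(x)P\pi(y)\xi,\xi\rangle=\langle\pi(y)\xi,\xi\rangle\langle\pi(x)\xi,\xi\rangle=\varphi(x)\varphi(y)$.

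For (1)$\Rightarrow$(3), the functional equation gives $\langle\pi(x)P\pi(y)\xi,\xi\rangle=\langle\pi(x)\xi,\xi\rangle\langle\pi(y)\xi,\xi\rangle$. Polarizing over the cyclic vectors $\pi(y)\xi$ and $\pi(x)^*\xi$ yields $P\eta=\langle\eta,\xi\rangle\xi$ for all $\eta$, so $\mathcal{H}^K=\mathbb{C}\xi$. Now take a nonzero closed invariant subspace $\mathcal{M}$ with projection $Q$. Then $Q$ commutes with $P$, so $Q\xi\in\mathcal{H}^K$, giving $Q\xi\in\{0,\xi\}$. In the case $Q\xi=\xi$, cyclicity gives $\mathcal{M}=\mathcal{H}$. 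In the case $Q\xi=0$, $\mathcal{M}\perp\pi(G)\xi$, so $\mathcal{M}=0$. Either way $\pi_\varphi$ is irreducible.

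The main obstacle is the density step in (3)$\Rightarrow$(1). I expect to need the continuity of $\varphi$, so that $\pi_\varphi$ is a continuous representation and integrates to $L^1(G)$ with $\pi(L^1(G))''=\pi(G)''$. I also need the identity $P\pi(f)P=\pi(f^{\natural})$, where $f^{\natural}$ is the $K\times K$ average of $f$. With these two facts, irreducibility gives $PB(\mathcal{H})P=\overline{P\pi(L^1(G))P}^{w}=\overline{\pi(L^1(K\backslash G/K))}^{w}$, which is commutative.
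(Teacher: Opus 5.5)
The paper gives no proof of this statement. It is quoted as standard background (following Faraut) in the discussion of Olshanski spherical pairs, so there is no argument of the authors' to compare yours against. Your proof is correct and is the standard self-contained one.

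The (2)$\Leftrightarrow$(3) part is the usual Radon--Nikodym correspondence between $\psi\le\varphi$ and positive contractions in $\pi_\varphi(G)'$. Your observation that bi-$K$-invariance of the pieces is automatic is exactly what is needed: it means extremality in $\mathcal{P}_1$ coincides with extremality among all normalized continuous positive definite functions.

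In (3)$\Rightarrow$(1) you actually prove the ``remarkable fact'' the paper asserts right after the theorem without proof, namely $\dim\mathcal{H}^K\le 1$ for irreducible $\pi$. You do this by compressing $\pi(L^1(G))''=B(\mathcal{H})$ to $\mathcal{H}^K$ via $P\pi(f)P=\pi(f^\natural)$ and using commutativity of $L^1(K\backslash G/K)$. Two facts should be recorded there. First, $\Delta|_K\equiv 1$, so right translation by $K$ preserves Haar measure and $f^\natural$, $f^*$ stay bi-invariant. Second, an approximate identity gives $\pi(L^1(G))'=\pi(G)'$.

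Your (1)$\Rightarrow$(3) shows that the functional equation alone forces $P$ to be the rank-one projection onto $\mathbb{C}\xi_\varphi$. So that implication, like (2)$\Leftrightarrow$(3), does not use the Gelfand property. The Gelfand property enters only in (3)$\Rightarrow$(1), which is where it must.

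Your route also fits the paper's operator-theoretic viewpoint. The $K$-invariance of $\xi_\varphi$ is the argument from the proof of Theorem~\ref{cpdthm}, with GNS in place of Stinespring.

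Two cosmetic fixes. Delete the garbled first display in the (3)$\Rightarrow$(1) paragraph and keep only $\int_K\varphi(xky)\,dk=\langle\pi(x)P\pi(y)\xi,\xi\rangle=\varphi(x)\varphi(y)$. And what you call polarization in (1)$\Rightarrow$(3) is just sesquilinearity plus density of $\operatorname{span}\pi(G)\xi$.
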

     Let $\Omega$ denote the set of spherical functions of positive type and hence the extreme points of $\mathcal{P}_{1}.$ One remarkable fact is that, for a irreducible unitary representation $(\pi, \mathcal{H})$, the subspace of $K$-invariant vectors, call it $\mathcal{H}^{K}$ is of atmost dimension $1.$
     \begin{definition}
         We say that the unitary representation $(\pi, \mathcal{H})$ is of \textbf{class one} or \textbf{spherical} if the subspace $\mathcal{H}^{K}$ is nontrivial, that is, $\dim\mathcal{H}^{K}=1.$
     \end{definition}
     Hence the set $\Omega$ can also be seen as the set of equivalence classes of the class one representations of $\mathcal{H}.$ In this sense, the set of spherical functions of positive type $\Omega$ generalizes characters of a representation and therefore $\Omega$ is also referred as the \textbf{spherical dual} of the pair $(G,K).$ Note that, $\Omega$ is exactly the Gelfand spectrum for the Banach algebra $L^{1}(K\backslash G/ K),$ and hence it is locally compact under the topology of uniform convergence on compact subsets of $G$.
     \begin{theorem}[Bochner, Godement]
         For each $\varphi\in \mathcal{P},$ there exists a unique positive bounded measure $\mu$ on $\Omega$ such that
         \[
         \varphi(g) = \int_{\Omega} \chi(g)\ d\mu(\chi).
         \]
     \end{theorem}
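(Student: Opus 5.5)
I would prove this by working entirely on the commutative side: the bi-invariant algebra $L^{1}(K\backslash G/K)$ together with its Gelfand spectrum. Fix $\varphi\in\mathcal{P}$. After scaling I may assume $\varphi(e)=1$, so $\varphi\in\mathcal{P}_{1}$; the case $\varphi=0$ is trivial.

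The first step is to turn $\varphi$ into a functional on the algebra. I set $\omega_\varphi(f)=\int_G f(x)\varphi(x)\,dx$ for $f\in L^{1}(K\backslash G/K)$. Positive definiteness of $\varphi$ gives $\omega_\varphi(f^{*}*f)\geq 0$, and $|\omega_\varphi(f)|\leq \varphi(e)\|f\|_{1}$. So $\omega_\varphi$ is a bounded positive functional on a commutative Banach $*$-algebra. This algebra is commutative precisely because $(G,K)$ is a Gelfand pair.

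The second step is to pass to a measure. I apply the GNS construction to $\omega_\varphi$ to obtain a $*$-representation $\rho$ of the commutative algebra $L^{1}(K\backslash G/K)$ on a Hilbert space, with cyclic vector $\xi$. Equivalently, $\rho$ is the restriction of the GNS representation $\pi_\varphi$ of $G$ to the closed span of $\pi_\varphi(f)\xi_\varphi$, where $\xi_\varphi$ is the $K$-fixed cyclic vector. The norm closure of $\rho(L^{1}(K\backslash G/K))$ is a commutative $C^{*}$-algebra. By the Gelfand–Naimark and spectral theorems, its spectrum embeds into the set of $*$-characters of $L^{1}(K\backslash G/K)$, and there is a unique positive Radon measure $\mu$ with
\[
\omega_\varphi(f)=\int \widehat{f}(\chi)\,d\mu(\chi),
\]
where $\widehat{f}(\chi)=\int_G f(x)\chi(x^{-1})\,dx$ is the Gelfand transform and $\mu(\text{total})=\|\xi\|^{2}=\varphi(e)$. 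The characters arising here are $*$-characters that are bounded by $1$ in the $\rho$-norm. By the definition of spherical function recalled above, each such character is given by a bounded spherical function $\chi$. It remains to check that these $\chi$ are of positive type, hence lie in $\Omega$. This follows because each character is a weak$^{*}$ limit of vector functionals $f\mapsto\langle\rho(f)\eta,\eta\rangle$, and these are of positive type; alternatively, one can apply the equivalence (1)$\Leftrightarrow$(3) above to the cyclic subrepresentations.

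The third step is to pass from the integrated identity to the pointwise one. Inserting into the previous display a bi-$K$-invariant approximate identity $f_\alpha$ (for instance normalized indicators of shrinking bi-invariant neighbourhoods, averaged over $K$, and then translated), and using continuity of $\varphi$ and of each $\chi\in\Omega$ together with dominated convergence (since $|\chi|\leq 1$ and $\mu$ is finite), I obtain $\varphi(g)=\int_\Omega\chi(g)\,d\mu(\chi)$ for every $g$. For uniqueness, suppose two measures $\mu,\mu'$ represent $\varphi$. Integrating against $f$ shows that they define the same functional on the functions $\widehat{f}$. These functions separate points of $\Omega$, are closed under conjugation, and vanish nowhere simultaneously, so by Stone–Weierstrass they are dense in $C_{0}(\Omega)$. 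Hence $\mu=\mu'$ by the Riesz representation theorem.

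I expect the main obstacle to be the identification in the second step: showing that the spectrum carrying $\mu$ sits inside $\Omega$, the positive-definite spherical functions, rather than the larger set of all bounded characters. A second delicate point is that the topology on $\Omega$ (uniform on compacta) agrees with the weak$^{*}$ Gelfand topology, which is Raikov's theorem, so that $\mu$ is a genuine Radon measure on $\Omega$. My first attempt at the positivity would use that $\rho$ is a $*$-representation, so every point of the spectrum of $C^{*}(\rho)$ is a positive functional. If that is insufficient, I would instead run Choquet theory on the weak$^{*}$-compact convex set $\{\psi\in\mathcal{P}:\psi(e)\le 1\}$, whose nonzero extreme points are exactly $\Omega$ by the preceding theorem.
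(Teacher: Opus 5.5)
The paper gives no proof of this statement. It quotes the classical Bochner--Godement theorem, with Faraut's notes as reference. Its surrounding framing (the extreme points of $\mathcal{P}_1$ are exactly $\Omega$, and Krein--Milman is invoked later) points toward the Choquet picture, which is your fallback. Your main route is instead Godement's Banach-algebra proof: the abstract Bochner theorem for $\omega_\varphi$ on $L^{1}(K\backslash G/K)$, realized through $C^{*}(\rho)$ acting on $\mathcal{H}_\varphi^{K}$. It is sound. It gives existence with no metrizability hypothesis, and uniqueness cheaply via Gelfand transforms and Stone--Weierstrass. The Choquet route gets the support in $\Omega$ for free from the extreme-point theorem, but it needs $G$ second countable (or Bishop--de Leeuw) and a separate uniqueness argument.

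The step you flag is the real one, and your sketch does not yet close it.

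\emph{What does not work.} Read as positivity on $L^{1}(K\backslash G/K)$, ``positive functional'' proves nothing. Every hermitian character satisfies $\omega(h^{*}*h)=|\omega(h)|^{2}\geq 0$, yet bounded hermitian spherical functions need not be of positive type. For $(SL(3,\mathbb{R}),SO(3))$ there are real bounded spherical functions tending to $1$ uniformly on compacta, and property (T) prevents those close to $1$ from being positive definite. For the same reason you cannot use the paper's remark that $\Omega$ is the whole Gelfand spectrum of $L^{1}(K\backslash G/K)$; that holds for compact $G$, not in general. The appeal to (1)$\Leftrightarrow$(3) is circular, since it presupposes $\chi\in\mathcal{P}_1$.

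\emph{What does work.} Both your $C^{*}(\rho)$-positivity idea and your weak$^{*}$-limit idea succeed once you add one observation. Put $f^{\natural}(x)=\int_K\int_K f(k_1xk_2)\,dk_1\,dk_2$ and $P=\int_K\pi_\varphi(k)\,dk$. For every $f\in C_c(G)$, not necessarily bi-invariant, and every bi-$K$-invariant $\psi$,
\[
\int_G (f^{*}*f)\,\psi=\int_G (f^{*}*f)^{\natural}\,\psi,
\qquad
\pi_\varphi((f^{*}*f)^{\natural})=P\,\pi_\varphi(f)^{*}\pi_\varphi(f)\,P\geq 0 .
\]
So $\rho((f^{*}*f)^{\natural})$ is a positive element of $C^{*}(\rho)$, although $(f^{*}*f)^{\natural}$ is generally not of the form $h^{*}*h$ with $h$ bi-invariant. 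Every character of $C^{*}(\rho)$ is nonnegative on it. So is every weak$^{*}$ limit of vector functionals with $\eta\in\mathcal{H}_\varphi^{K}$, since each takes the value $\|\pi_\varphi(f)\eta\|^{2}$ there. By the first identity, this nonnegativity is exactly positive definiteness on $G$ of the spherical function representing the character. The same criterion shows $\Omega\cup\{0\}$ is weak$^{*}$ closed. Your Stone--Weierstrass uniqueness step needs this, to make $\Omega$ locally compact with $\widehat{f}|_{\Omega}\in C_0(\Omega)$.

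\emph{Smaller points.}
\begin{itemize}
\item With your convention $\widehat{f}(\chi)=\int f(x)\chi(x^{-1})\,dx$ you get $\varphi(g)=\int\overline{\chi(g)}\,d\mu(\chi)$, so push $\mu$ forward under $\chi\mapsto\overline{\chi}$.
\item The bump functions must be translated to $g$ \emph{before} bi-$K$-averaging.
\item Representing a character by a continuous function needs $L^{1}$--$L^{\infty}$ duality plus a smoothing argument, not the definition alone.
\item For nets, replace dominated convergence by tightness of $\mu$ together with equicontinuity of compact subsets of $\Omega$ (Ascoli, using the Raikov identification you mention).
\end{itemize}
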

     \subsubsection{Olshanski spherical pairs}
     For each $n\in \mathbb{N},$ let $(G_{n}, K_{n})$ be a Gelfand pair such that $G_{n}$ is a closed subgroup of $G_{n+1},\ K_{n}$ is a closed subgroup of $K_{n+1},$ and $K_{n} = G_{n} \cap K_{n+1}.$ Define
     \[
     G = \bigcup_{n=1}^{\infty} G_{n}, \quad K = \bigcup_{n=1}^{\infty} K_{n}.
     \]
     We can endow $G$ with the inductive limit topology. Then $K$ is a closed subgroup of $G.$ But, in general $G, K$ need not be locally compact and compact respectively. We call this $(G,K)$ to be an Olshanski spherical pair (OSP).
     Let $(G,K)$ be an OSP, as defined above. Now, let $\varphi: G\to \mathbb{C}$ be a continuous bi-$K$-invariant function. Then $\varphi$ is said to be \textbf{spherical} if for any $x,y\in G,$
     \[
     \lim_{n\to \infty} \int_{K_{n}} \varphi(xky)\ dk_{n} = \varphi(x)\varphi(y),
     \]
     where $dk_{n}$ be the normalized Haar measure on the compact group $K_{n}.$
      As in the case of a Gelfand pair, if $\varphi$ is a spherical function of positive type, there exists a class I representation $(\pi, \mathcal{H})$ of $G$(i.e irreducible, unitary, with $\dim \mathcal{H}^{K}  = 1$) such that 
      \[
      \varphi(x) = \langle \pi(x)v, v \rangle,\quad \text{where}\ v\in \mathcal{H}^{K} \setminus\{0\}.
      \]
      \subsubsection{Schoenberg's theorem through Olshanski's lenses}
      Let $G_{n} = SO(n+1)$ and $K_{n} = SO(n)$. By earlier discussion on $2$-point homogeneous spaces gives that $(G,K)$ is an OSP, where 
      \[
      G = \bigcup_{n=0}^{\infty} SO(n+1), \quad K = \bigcup_{n=0}^{\infty} SO(n)
      \]
      Note that, $K_{n}$ can be realised as a (compact) subgroup of $G_{n}$ by the identification that for any $A\in SO(n)$
\[
\begin{bmatrix}
    1 & 0 \\ 0 & A
\end{bmatrix} \in SO(n+1).
\]
     Let $\{e_{j}\}$ be the standard basis of $\mathbb{R}^{n+1}$, that is, $e_{j}(i) = \delta_{i,j}.$ By construction, $SO(n)$ fixes the first basis vector $e_{1}$. Now, let $\varphi: SO(\infty)\to \mathbb{C}$ be a continuous bi-$K$-invariant function. Then there exists a continuous $\psi: [-1,1]\to \mathbb{C}$ such that 
     \[
     \varphi(g) = \psi\big(\langle \pi(g) e_{1}, e_{1}\rangle \big)
     \]
     Suppose $x,y\in G$ with $\langle \pi(x) e_{1}, e_{1}\rangle = \cos\alpha$ and $\langle \pi(y) e_{1}, e_{1}\rangle = \cos\beta$. By Funk-Hecke formula,
     \[
     \int_{SO(n)} \varphi(xky)\ dk_{n} = \lambda_{n} \int_{0}^{\pi} \psi(\cos\alpha \cos\beta + \sin{\alpha}\sin{\beta}\cos{\theta}) \sin^{n-1}  {\theta}\ d\theta,
     \]
     where $\lambda_{n}$ is defined so that $\lambda_{n} sin^{n-1} \theta d\theta$ is a (sequence) of probability measures. In fact, $\lambda_{n} = \dfrac{\Gamma(\frac{n+1}{2})}{\sqrt{\pi} \Gamma(\frac{n}{2})}$. 
     \begin{lemma}[See Lemma 5.4 \cite{FarautTIFR}]\label{lem5.4}
         For any continuous function $f$ on $[0,\pi]$
         \[
         \lim_{n\to \infty} \lambda_{n} \int_{0}^{\pi} f(\theta) \sin^{n-1}\theta \ d\theta = f\bigg(\frac{\pi}{2}\bigg).
         \]
     \end{lemma}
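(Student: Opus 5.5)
The plan is to show that the probability measures $\mu_n = \lambda_n \sin^{n-1}\theta\, d\theta$ on $[0,\pi]$ concentrate at $\pi/2$. Once that is known, continuity of $f$ at $\pi/2$ and boundedness of $f$ on the compact interval finish the argument. The normalization is already built into $\lambda_n$, since $\lambda_n\int_0^\pi \sin^{n-1}\theta\,d\theta = 1$. I will therefore write
\[
\lambda_n \int_0^\pi f(\theta)\sin^{n-1}\theta\,d\theta - f\!\left(\tfrac{\pi}{2}\right) = \lambda_n\int_0^\pi \left(f(\theta)-f\!\left(\tfrac{\pi}{2}\right)\right)\sin^{n-1}\theta\,d\theta
\]
and estimate the right-hand side.

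Given $\varepsilon>0$, continuity at $\pi/2$ provides $\delta>0$ such that $|f(\theta)-f(\pi/2)|<\varepsilon$ whenever $|\theta-\pi/2|<\delta$. I then split the integral into the part over $|\theta-\pi/2|<\delta$ and the part over its complement. On the first part the integrand is bounded by $\varepsilon$ times the density, so that piece contributes at most $\varepsilon$. On the complement the integrand is bounded by $2\|f\|_\infty$ times the density, so that piece contributes at most $2\|f\|_\infty\,\mu_n(\{|\theta-\pi/2|\ge\delta\})$. It therefore remains to show that $\mu_n(\{|\theta-\pi/2|\ge\delta\})\to 0$ for each fixed $\delta$.

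This tail estimate is the only real step. On the complement we have $\sin\theta\le \cos\delta =: q<1$. For the lower side I compare against a small window: $\sin\theta\ge \cos(\delta/2)=:r$ on $|\theta-\pi/2|<\delta/2$, and $r>q$. Hence
\[
\mu_n(\{|\theta-\pi/2|\ge\delta\}) \le \frac{\pi q^{n-1}}{\delta\, r^{n-1}} = \frac{\pi}{\delta}\left(\frac{q}{r}\right)^{n-1}\to 0,
\]
because the total mass equals $1$ and the window alone carries unnormalized mass at least $\delta r^{n-1}$. This ratio bound avoids any need for the explicit asymptotics of $\lambda_n$. As a cross-check, Stirling gives $\lambda_n\sim\sqrt{n/(2\pi)}$, which matches the Gaussian concentration of $\sin^{n-1}\theta \approx e^{-(n-1)(\theta-\pi/2)^2/2}$ near $\pi/2$. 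Letting $n\to\infty$ and then $\varepsilon\to0$ completes the proof.
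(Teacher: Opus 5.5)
Your proof is correct and takes the same approach as the paper. The paper's proof only asserts that the probability measures $\lambda_n\sin^{n-1}\theta\,d\theta$ converge weakly to $\delta_{\pi/2}$. Your $\varepsilon$--$\delta$ splitting, together with the tail bound $\frac{\pi}{\delta}\bigl(\cos\delta/\cos(\delta/2)\bigr)^{n-1}\to 0$, proves that weak convergence and supplies the detail the paper omits.
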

     \begin{proof}
         This follows by the weak convergence of sequence of measures $\{\lambda_{n}\sin^{n-1}\theta \ d\theta\}$ to $\delta_{\pi/2}$
     \end{proof}
     Using this Lemma \ref{lem5.4},
     \[
     \int_{SO(n)} \varphi(xky)\ dk_{n} = \psi(\cos\alpha \cos\beta)
     \]
     Note that, if $\varphi$ is spherical then 
     \[
     \psi(\cos\alpha \cos\beta) = \psi(\cos\alpha) \psi(\cos\beta).
     \]
    So, the function $\psi$ is multiplicative on $[-1,1]$. Now a classical result of Banach/Sierpinski along with the regularity of the spherical functions imply that such a function has to be a power function, that is
     \[
     \varphi(g) = \langle \pi(g)e_{1}, e_{1} \rangle^{k},\quad \text{where}\ k\in \mathbb{N}.
     \]
     Therefore, the spherical dual is isomorphic to the naturals, that is, $\Omega \simeq \mathbb{N}.$ Thus, Krein-Milman theorem gives that any $\varphi\in \mathcal{P}_{1}$ can uniquely written as 
     \[
     \varphi(g) = \psi(\langle \pi(g)e_{1}, e_{1}\rangle),
     \]
     where
     \[
     \psi(t) = \sum_{k=0}^{\infty} a_{k} t^{k},
     \]
     with
     
     \[
     a_{k}\geq 0, \sum_{k = 0}^{\infty} a_{k} = 1.
     \]
     
     Therefore, any continuous bi-$K$-invariant positive definite function is of the above form, which in turn gives us the Schoenberg's theorem. A similar but non-trivial variant of this result for unitary groups $U(n)$ can be found in the works of Voiculescu \cite{Voiculescu} and Olshanski--Borodin \cite{BO}.

    \subsection{Finite dimension problems}
        \subsubsection{Somewhat comprehensible pathologies for $S^1$}
            Note that \[S^1 \cong \mathbb{Q}/\mathbb{Z} \oplus \bigoplus_{\mathbb{R}} \mathbb{Q}\] as $S_1 \cong \mathbb{R}/\mathbb{Z}.$
            Choosing such an isomorphism, and hence a basis for
            $\mathbb{R}$ as a vector space over $\mathbb{Q},$
            we see the following subgroups
            \[H_{\Lambda} = \left[\frac{1}{\lambda_\mathbb{Z}}\mathbb{Z}\right]/\mathbb{Z} \oplus \bigoplus_{r\in\mathbb{R}} \frac{1}{\lambda_r}\mathbb{Q}\]
            Now for any finite subset of $\mathbb{R}$
            the closure image of the generators $e_1, \ldots, e_r$
            for $H_{\Lambda}$ (where the indexing comes from the direct product structure)
            under the continuous homomorphisms $z^{kn}$ where $k=0,1,2 \ldots,$ and $n$ is fixed is all of
            $(S^1)^r.$ So, any homomorphism from $S^1$ to $S^1$
            is a pointwise limit of continuous homomorphisms.
            That is, the closure of the continuous homomorphisms
            in the topology of pointwise convergence is exactly
            the so-called Bohr compactification \cite{folland2016course}.
            In turn, $\overline{\mathbb{C}[S^1]}$ is isomorphic to $\overline{\mathbb{C}[S^1_{disc}]}$ where $S^1_{disc}$ is $S^1$ equipped with the nonstandard discrete topology, as the norm is given by the supremum of the norm over representations, and the noncontinuous ones can be approximated pointwise, so the norms on the dense set $\mathbb{C}[S^1]=\mathbb{C}[S^1_{disc}]$ agree.

            It is unclear to the authors whether or not
            $\overline{\mathbb{C}[SO(n)]}\cong \overline{\mathbb{C}[SO(n)_{disc}]}$ or for other general groups. Such creates some pathologies when trying to decide if a map is completely positive, as we lose the test condition given in Lemma \ref{lemmaposcondinit} for
            non-discrete groups.
            
            Namely, as Glimm type lemmas as in Theorem \ref{glimm} allow one to obtain any state as a limit of ones arising from continuous representations, if the two objects are isomorphic, there would be only two discontinuous class I representations of $SO(n)$ over $SO(n-1)$ for $n>3$ as Darboux type estimates as in Theorem \ref{darbouxest} imply the corresponding Gegenbauer polynomials go to $0$ on the interior of the unit interval.

         \subsubsection{Positive definite functions on finite spheres and consequences for representations of Class I of $SO(n)$}
            To classify the positive definite functions on finite spheres, we first classify the representations.
            By Dixmier's version of Glimm's lemma, given above as Theorem \ref{glimm}, we know that any pure state of $SO(n)$ which is invariant under $SO(n-1)$ is a weak$^*$ limit of a net of pure states. If we knew these states were limits of pure states which were $SO(n-1)$-invariant, then
            by Darboux type estimates as in Theorem \ref{darbouxest} give that any pathological
            limiting states would need to be the same as for $SO(n).$ Naive attempts to symmetrize approximating pure states fail as they will change the value of that identity,
            that is we cannot guarantee that
                \[\tilde{\sigma}(g) = \int_{SO(n-1)} \int_{SO(n-1)} \sigma(h_1^{-1}gh_2)dh_1,dh_2\]
            remains anything like unital or a good approximation.
            
            The following is a naive conjecture.
            \begin{conjecture}\label{compresthm3}
                Any irreducible class one representation of $SO(n)$
                arises from one of the following states:
                \begin{enumerate}
                    \item $\varphi(\gamma)=P^{(n-1)}(\gamma_{11})$
                    \item $\varphi(\gamma)=\chi(\gamma_{11}=1 \vee \gamma_{11}=-1)$ 
                    \item $\varphi(\gamma)=\gamma_{11}\chi(\gamma_{11}=1 \vee \gamma_{11}=-1)$ 
                \end{enumerate}
            \end{conjecture}
    
            We would thus immediately obtain the following conjectural corollary of Theorem \ref{cpdthm} which classifies positive definite functions on spheres viewed as homogeneous spaces given by quotients of the form $SO(n)/SO(n-1).$
            \begin{conjecture}\label{compresthm2}
                Let $n> 2$
                Let $f:[-1,1]\rightarrow\mathbb{R}$ such that 
                \begin{enumerate}
                    \item $f(1)=1,$
                    \item $\hat{f}(\gamma)=f(\gamma_{11})$
                    defines a positive definite function 
                    on $SO(n)/SO(n-1) \cong S^{n-1},$
                \end{enumerate}
                then $f$ is of the form
                    \[f(x) = a\chi(x=1 \vee x=-1) + bx\chi(x=1 \vee x=-1) + 
                    \sum c_iP^{(n-1)}(x)\]
                where $a,b,c_i \geq 0, a+b \sum c_i =1$ and $\chi$ is an indicator function.
            \end{conjecture}
            We note that given any sequence of pure states $\sigma_n$ corresponding to discontinuous
            representations of $SO(n)$ which are invariant under $SO(n-1),$
            that a limiting state on $SO(n)$ would have to come from Theorem \ref{compresthm}.

            Call a homogeneous space $G/H$ {\bf pathological} if there is a positive definite function that is not a pointwise limit of continuous ones. Essentially, our conjecture is that $SO(n)/SO(n-1)$ is not pathological, which we know to be true for
            $n=2,\infty.$ If such a quotient were pathological, one would see extra nontrivial estimates for finite constrained angle codes. However, such functions must be nonmeasurable \cite{dixmier}, which suggests that whether there is a pathological quotient might depend on set theoretic concerns. However, deciding if there is a particular constrained angle code is a finite problem, thus for any bound, there is a by hands proof by exhaustion of the bound.

    \bibliographystyle{plain}
    \bibliography{references}

\end{document}